 \renewcommand{\epsilon}{\varepsilon}
\newtheorem{theorem}{Theorem}[section]
 \newtheorem{lemma}[theorem]{Lemma}
 \newtheorem{Corollary}[theorem]{Corollary}
 \newtheorem{proposition}[theorem]{proposition}
 \newtheorem{Proposition}[theorem]{Proposition}
\newtheorem{deff}[theorem]{Definition}
 \newtheorem{rem}[theorem]{Remark}
 \newcommand{\bth}{\begin{theorem}}
 \newcommand{\ble}{\begin{lemma}}
 \newcommand{\bcor}{\begin{corr}}
 \newcommand{\bdeff}{\begin{deff}}
 \newcommand{\bprop}{\begin{proposition}}
 \newcommand{\ele}{\end{lemma}}
 \newcommand{\ecor}{\end{corr}}
 \newcommand{\edeff}{\end{deff}}
 \newcommand{\eprop}{\end{proposition}}
 \renewcommand{\Pi}{\varPi}
 \renewcommand{\epsilon}{\varepsilon}
\numberwithin{equation}{section}
\thanks{The first author was supported  by National Science Foundation of China(No.12101145) and  Guangxi Science, Technology Project (Grant No. GuikeAD22035202).}
\title
[Nonlinear second boundary conditions ]{On the second boundary value problem for mean curvature flow
in Minkowski  space}
\author{Rongli Huang}
\address{School of Mathematics and Statistics, Guangxi Normal University,
Guilin, Guangxi 541004, People's Republic of China,
 E-mail: ronglihuangmath@gxnu.edu.cn}
\begin{document}
\maketitle
\begin{abstract}
This is a sequel to \cite{HRY} and \cite{CHY}, which study the second boundary value problems for  mean curvature flow.  Consequently, we construct the translating solitons with prescribed Gauss image in Minkowski space.

\end{abstract}

\let\thefootnote\relax\footnote{
2010 \textit{Mathematics Subject Classification}. Primary 53C44; Secondary 53A10.

\textit{Keywords and phrases}. Gauss image; Legendre transform; Hopf lemma.}

\section{Introduction}

In a series of papers \cite{HR}-\cite{CHY}, the first author, Y.H.Ye and J.J.Chen  were concerned with
 special Lagrangian evolution equations
\begin{equation}\label{e1.1}
\frac{\partial u}{\partial t}=F_\tau(\lambda(D^2 u)),\quad
 \mathrm{in}\quad \Omega_{T}=\Omega\times(0,T),
\end{equation}
associated with the second boundary value problem
\begin{equation}\label{e1.2}
Du(\Omega)=\tilde{\Omega},  \quad t>0,
\end{equation}
and the initial condition
\begin{equation}\label{e1.3}
u=u_{0},  \quad t=0,
\end{equation}
for given $F_{\tau}$, $u_{0}$, $\Omega$  and  $\tilde{\Omega}$. Specifically, $\Omega$, $\tilde{\Omega}$  are uniformly convex bounded
domains with smooth boundary in $\mathbb{R}^{n}$ and
\begin{equation*}
F_{\tau}(\lambda):=\left\{ \begin{aligned}
&\frac{1}{2}\sum_{i=1}^n\ln\lambda_{i}, &&\tau=0, \\
& \frac{\sqrt{a^2+1}}{2b}\sum_{i=1}^n\ln\frac{\lambda_{i}+a-b}{\lambda_{i}+a+b},  &&0<\tau<\frac{\pi}{4},\\
& -\sqrt{2}\sum_{i=1}^n\frac{1}{1+\lambda_{i}}, &&\tau=\frac{\pi}{4},\\
& \frac{\sqrt{a^2+1}}{b}\sum_{i=1}^n\arctan\frac{\lambda_{i}+a-b}{\lambda_{i}+a+b},  \ \ &&\frac{\pi}{4}<\tau<\frac{\pi}{2},\\
& \sum_{i=1}^n\arctan\lambda_{i}, &&\tau=\frac{\pi}{2}.
\end{aligned} \right.
\end{equation*}
Here $a=\cot \tau$, $b=\sqrt{|\cot^2\tau-1|}$,  $x=(x_{1},x_{2},\cdots,x_{n})$, $u=u(x,t)$ and $\lambda(D^2 u)=(\lambda_1,\cdots, \lambda_n)$ are the eigenvalues of Hessian matrix $D^2 u$ according to $x$. The long time asymptotic behavior of solutions were obtained.
\begin{theorem}\label{prop1.1}$($\cite{HR}-\cite{CHY}$)$.
Assume that $\Omega$, $\tilde{\Omega}$ are bounded, uniformly convex domains with smooth boundary in $\mathbb{R}^{n}$, $0<\alpha_{0}<1$ and
$0<\tau\leq\frac{\pi}{2}.$
  Then for any given initial function $u_{0}\in C^{2+\alpha_{0}}(\bar{\Omega})$
  which is   uniformly convex and satisfies $Du_{0}(\Omega)=\tilde{\Omega}$,  the  strictly convex solution of (\ref{e1.1})-(\ref{e1.3}) exists
  for all $t\geq 0$ and $u(\cdot,t)$ converges to a function $u^{\infty}(x,t)=u^\infty(x)+C_{\infty}\cdot t$ in $C^{1+\zeta}(\bar{\Omega})\cap C^{4+\alpha}(\bar{D})$ as $t\rightarrow\infty$
  for any $D\subset\subset\Omega$, $\zeta<1$,$0<\alpha<\alpha_{0}$, i.e.,

  $$\lim_{t\rightarrow+\infty}\|u(\cdot,t)-u^{\infty}(\cdot,t)\|_{C^{1+\zeta}(\bar{\Omega})}=0,\qquad
  \lim_{t\rightarrow+\infty}\|u(\cdot,t)-u^{\infty}(\cdot,t)\|_{C^{4+\alpha}(\bar{D})}=0.$$
And $u^{\infty}(x)\in C^{1+1}(\bar{\Omega})\cap C^{4+\alpha}(\Omega)$ is a solution of
\begin{equation}\label{e1.18}
\left\{ \begin{aligned}F_\tau(\lambda(D^2 u))&=C_{\infty},
&  x\in \Omega, \\
Du(\Omega)&=\tilde{\Omega}.
\end{aligned} \right.
\end{equation}
The constant $C_{\infty}$ depends only on $\Omega$, $\tilde{\Omega}$ and $F$. The solution to (\ref{e1.18}) is unique up to additions of constants.
\end{theorem}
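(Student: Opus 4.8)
\emph{The plan} is to regard (\ref{e1.1})--(\ref{e1.3}) as a fully nonlinear parabolic problem carrying a nonlinear oblique (``second'') boundary condition, to prove a priori estimates up to second order that are \emph{uniform in $t$}, to bootstrap these to higher regularity, and then to extract the asymptotic profile $u^\infty(x)+C_\infty t$ from a monotonicity argument for $\partial_t u$ combined with uniqueness for the limiting elliptic problem (\ref{e1.18}). For the local theory one uses that each $F_\tau$ is strictly increasing and concave in $\lambda$ on the positive cone, so (by the standard fact that a concave symmetric function of the eigenvalues is a concave function of the matrix) $u\mapsto F_\tau(\lambda(D^2u))$ is concave in $D^2u$ and the equation is uniformly parabolic at a strictly convex function; and, because $\tilde\Omega$ is uniformly convex, $Du(\Omega)=\tilde\Omega$ is equivalent to a strictly oblique boundary relation, with obliqueness quantified by the convexity of $\tilde\Omega$. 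Standard parabolic oblique-derivative theory, or the implicit function theorem in parabolic H\"older spaces, then yields a solution on a maximal interval $[0,T^*)$, and strict convexity is preserved by the maximum principle applied to (\ref{e1.1}) differentiated twice in space.

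\emph{A priori estimates.} The gradient bound is almost free: $Du(\cdot,t)$ takes values in the bounded set $\tilde\Omega$, so $|Du|\le C(\tilde\Omega)$ uniformly in $t$, and strict convexity gives $D^2u\ge 0$; normalizing $u$ by subtracting $\min_{\bar{\Omega}}u(\cdot,t)$ then gives a uniform $C^0$ bound. The core is the two-sided, time-uniform $C^2$ estimate. For the upper bound on $D^2u$ one differentiates (\ref{e1.1}) twice in space, uses concavity of $F_\tau$ to obtain the favorable sign in the maximum principle for the largest eigenvalue of $D^2u$, and controls the boundary term by differentiating the second boundary condition tangentially and along the oblique direction, using a barrier built from the defining functions of $\Omega$ and $\tilde\Omega$; a lower bound on the eigenvalues (uniform parabolicity) then follows from the upper bound together with the equation. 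Once $\lambda(D^2u)$ is pinched between two positive constants uniformly in $t$, (\ref{e1.1}) is uniformly parabolic and concave, so Krylov--Safonov and Schauder estimates give interior $C^{4+\alpha}$ bounds on $D\subset\subset\Omega$ and (weaker) bounds up to $\partial\Omega$, all uniform in $t$; in particular the solution exists for all $t\ge 0$.

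\emph{Asymptotics.} Set $v=\partial_t u=F_\tau(\lambda(D^2u))$. Differentiating (\ref{e1.1}) in $t$ shows $v$ solves a linear uniformly parabolic equation $\partial_t v=a^{ij}D_{ij}v$ with $C^\alpha$ coefficients, and differentiating $Du(\Omega)=\tilde\Omega$ in $t$ shows $Dv$ is tangent to $\partial\tilde\Omega$ along $\partial\Omega$, i.e.\ $v$ obeys a homogeneous strictly oblique condition. Hence $\max_{\bar{\Omega}}v(\cdot,t)$ is nonincreasing and $\min_{\bar{\Omega}}v(\cdot,t)$ is nondecreasing; using the uniform higher-order estimates for compactness together with the strong maximum principle (equivalently, an exponential-decay estimate for $\mathrm{osc}\,v$) one gets $\mathrm{osc}\,v(\cdot,t)\to 0$, so $v(\cdot,t)\to C_\infty$ uniformly. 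The value $C_\infty$ is pinned by the change of variables $y=Du(x)$: e.g.\ for $\tau=0$, $F_0(\lambda(D^2u))=\tfrac12\ln\det D^2u$ and $\int_\Omega\det D^2u\,dx=|\tilde\Omega|$ force $C_\infty=\tfrac12\ln(|\tilde\Omega|/|\Omega|)$, and analogous integral identities handle $\tau\in(0,\tfrac{\pi}{2}]$, so $C_\infty$ depends only on $\Omega,\tilde\Omega,F$. Since $\|\partial_t u-C_\infty\|$ then tends to $0$ fast enough, $u(\cdot,t)-C_\infty t$ converges, and by interpolation against the uniform bounds the convergence holds in $C^{1+\zeta}(\bar{\Omega})\cap C^{4+\alpha}(\bar{D})$ to a limit $u^\infty$ solving (\ref{e1.18}).

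\emph{Uniqueness and the main obstacle.} If $u_1,u_2$ solve (\ref{e1.18}) with constants $C_1,C_2$, then $w=u_1-u_2$ satisfies a linear elliptic equation $b^{ij}D_{ij}w=C_1-C_2$ with no zeroth-order term (mean value theorem plus monotonicity of $F_\tau$) and a homogeneous oblique condition from $Du_1(\Omega)=Du_2(\Omega)=\tilde\Omega$; an interior extremum forces $C_1=C_2$, and then the Hopf lemma at a boundary extremum excludes a non-constant $w$, so $w$ is constant. I expect the real difficulty to be the \emph{global} (uniform-in-$t$) second-order boundary estimate under the nonlinear second boundary condition, since the obliqueness degenerates unless the uniform convexity of both $\Omega$ and $\tilde\Omega$ is exploited quantitatively and everything is kept uniform as $t\to\infty$; the secondary delicate point is upgrading the monotonicity of $\max v$ and $\min v$ into genuine convergence of $u(\cdot,t)-C_\infty t$.
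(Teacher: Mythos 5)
First, a framing remark: the paper does not prove Theorem \ref{prop1.1}; it is quoted verbatim from the earlier works \cite{HR}--\cite{CHY} as motivation, and what this paper actually proves is the analogue (Theorem \ref{t1.2}) for the Minkowski mean curvature flow. So the fair comparison is against the strategy of \cite{HR}--\cite{CHY}, which is also mirrored in the proof of Theorem \ref{t1.2} here. At that level your skeleton is the right one: short-time existence via the inverse function theorem in parabolic H\"older spaces, the $\dot u$-estimate $\partial_t u = F_\tau(\lambda(D^2u))$ pinched between its initial $\min$ and $\max$ by the Hopf lemma, concavity of $F_\tau$ for the interior $C^2$ bound, barriers built from defining functions $h,\tilde h$ for boundary second derivatives, and the convergence-via-$\partial_t u$ machinery (here encapsulated in Proposition \ref{p4.1}). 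Your uniqueness argument via an interior/boundary extremum plus Hopf is the standard one and is fine.

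The genuine gap is your step ``a lower bound on the eigenvalues then follows from the upper bound together with the equation.'' That inference uses that $F_\tau(\lambda)\to -\infty$ as some $\lambda_i\to 0^+$, which is true for $\tau=0$ but false for $\tau=\tfrac{\pi}{2}$ (and marginal for nearby $\tau$): $F_{\pi/2}=\sum\arctan\lambda_i$ stays bounded as an eigenvalue degenerates, so boundedness of $F$ together with an upper bound on $\lambda$ does not by itself prevent $\lambda_{\min}\to 0$. In \cite{HR}--\cite{CHY} (and in the present paper's Section 3 for the Minkowski analogue), the lower bound on $D^2u$ is obtained by passing to the Legendre transform $\tilde u$ on $\tilde\Omega$, observing that the dual problem has the same structure, and proving an \emph{upper} bound on $D^2\tilde u=(D^2u)^{-1}$; this duality is also exactly what powers the strict obliqueness estimate (the uniform lower bound on $\langle\beta,\nu\rangle$), where the min-point argument has to be run simultaneously for $\omega$ on $\Omega$ and for its dual $\tilde\omega$ on $\tilde\Omega$ before either side closes. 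Your proposal never engages with this duality, and without it the global $C^2$ and obliqueness estimates do not close for the full range $0<\tau\le\tfrac{\pi}{2}$. A secondary quibble: your remark that ``analogous integral identities handle $\tau\in(0,\tfrac{\pi}{2}]$'' to pin $C_\infty$ overstates what is available; only $\tau=0$ has a clean $\int_\Omega\det D^2u = |\tilde\Omega|$ identity, and for general $\tau$ (in particular the Brendle--Warren case $\tau=\tfrac{\pi}{2}$) one only shows that $C_\infty$ is the common value determined by solvability and uniqueness of \eqref{e1.18}, not by an explicit formula.
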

\begin{rem}
By the regularity theory for parabolic partial differential equations, $\Omega$, $\tilde{\Omega}$  are uniformly convex bounded
domains with $C^{2+\alpha}$ boundary in $\mathbb{R}^{n}$, then the above theorem is also ture. For the convenience of readers, we always assume that the domains $\Omega$ and  $\tilde{\Omega}$  are uniformly convex bounded
domains with smooth boundary in the following setting.
\end{rem}
In particular, as far as $\tau=\frac{\pi}{2}$, Brendle-Warren's theorem \cite{SM} was proved by parabolic methods.

Inspired from Theorem 1.1, we consider the second boundary value problem for the mean curvature flow in  Minkowski space.
We start with the differential geometric, or "classical",
definition of the mean curvature flow for graphic hypersurfaces in  $\mathbb{R}^{n,1}$; for a nice introduction, see \cite{KG} and \cite{KE}.
Minkowski space $\mathbb{R}^{n,1}$ is the space $\mathbb{R}^{n}\times \mathbb{R}$ equipped with the metric
\begin{equation}\label{e1.5}
ds^{2}=dx^{2}_{1}+\cdots+dx^{2}_{n}-dx^{2}_{n+1}.
\end{equation}
We assume that $u\in C^{2}(\Omega)$ for some domain in $\mathbb{R}^{n}$  and $M$=graph $u$ over $\Omega$.
Denote $$D_{i}u=\dfrac{\partial u}{\partial x_{i}},
D_{ij}u=\dfrac{\partial^{2}u}{\partial x_{i}\partial x_{j}},
D_{ijk}u=\dfrac{\partial^{3}u}{\partial x_{i}\partial x_{j}\partial
x_{k}}, \cdots $$
and $|Du|=\sqrt{\sum_{i=1}^{n}|D_{i}u|^{2}}.$
It follows from \cite{LLJ}   that  we can state various geometric quantities associated with the graph of  $u\in C^{2}(\Omega)$.
In the coordinate system, Latin indices range from 1 to $n$ and indicate quantities in the graph. We use the Einstein summation convention,
if the indices are different from 1 to n.

We say that $M\subset \mathbb{R}^{n,1}$  is strictly spacelike, if $\sup_{\Omega}|Du|<1.$
The Minkowski metric (\ref{e1.5}) restricted to the strictly spacelike graph $M$ defines a
Riemannian metric on $M$, which in the standard coordinates on $\mathbb{R}^{n,1}$ is expressed by
\begin{equation}\label{e1.6}
g_{ij}=\delta_{ij}-D_{i}uD_{j}u,\quad 1\leq i,j\leq n.
\end{equation}
While the inverse of the induced metric and second fundamental form of $M$ are given by
\begin{equation}\label{e1.7}
g^{ij}=\delta_{ij}+\frac{D_{i}uD_{j}u}{1-|Du|^{2}},\quad 1\leq i,j\leq n,
\end{equation}
and, respectively,
\begin{equation}\label{e1.8}
h_{ij}=\frac{D_{ij}u}{\sqrt{1-|Du|^{2}}},\quad 1\leq i,j\leq n.
\end{equation}
The timelike unit normal vector field to $M$ is expressed by
\begin{equation}\label{e1.9}
\nu=\frac{(Du,1)}{\sqrt{1-|Du|^{2}}},\quad 1\leq i,j\leq n.
\end{equation}
The principal curvatures $\kappa_{1},\cdots,\kappa_{n}$ of $M$ are the eigenvalues of $[h_{ij}]$
relative to $[g_{ij}]$ and  the k-th mean curvature $H_{k}$ of $M$ is defined to be the k-th elementary
symmetric function of the principal curvatures,
\begin{equation}\label{e1.10}
H_{k}=S_{k}(\kappa_{1},\cdots,\kappa_{n})=\sum_{1\leq i_{1}<\cdots <i_{k}\leq n}\kappa_{i_{1}}\cdots\kappa_{i_{k}}.
\end{equation}
Specially, the mean curvature of $M$ is written as
\begin{equation}\label{e1.11}
H\triangleq H_{1}=\sum_{1\leq i\leq n}\kappa_{i}.
\end{equation}
Let $B_{1}(0)$ be the unit ball in $\mathbb{R}^{n}$  with the Klein model of the
hyperbolic geometry $\{(x,1)\in \mathbb{R}^{n,1}, |x|<1\}$.  Following Lemma 4.5 in \cite{CT}, the Gauss map of the graph
 $(x,u(x))$
is described from $\mathbb{R}^{n}$ to $B_{1}(0)$ as
$$\mathfrak{G}: \,\,x\mapsto Du(x).$$
We aim to construct convex spacelike  translating solitons with prescribed Gauss  image over any strictly convex domains by making use of the mean curvature flow.
Let $\Omega$ and $\tilde{\Omega}$ are two uniformly convex bounded domains with smooth boundary in  $\mathbb{R}^{n}$ and $\tilde{\Omega}\subset\subset B_{1}(0)$
respectively.
Along the lines of approach in previous work, including O.C. Schn$\ddot{\text{u}}$rer\cite{OC},
O.C. Schn$\ddot{\text{u}}$rer-K. Smoczyk \cite{OK}, J. Kitagawa \cite{JK}, Ma-Wang-Wei \cite{MWW}.
We consider a family of spacelike hypersurfaces
\begin{equation*}
X_{t}=X(\cdot,t):\,\,\Omega\mapsto \mathbb{R}^{n,1}
\end{equation*}
satisfying mean curvature flow in Minkowski  space
\begin{equation}\label{e1.11a}
\frac{\partial X}{\partial t}=H\nu,\quad
 \mathrm{in}\quad \Omega_{T}=\Omega\times(0,T)
\end{equation}
with the initial hypersurface
\begin{equation}\label{e1.11b}
X(\cdot,0)=X_{0}.
\end{equation}
For each $t>0$, we need the image of the Gauss map of $X(\cdot,t)$ over $\Omega$  to be $\tilde{\Omega}$.
That is
\begin{equation}\label{e1.11c}
\mathfrak{G}(\Omega)=\tilde{\Omega}.
\end{equation}
According to the property  of the mean curvature flow with prescribed Gauss image, one can see the papers \cite{Xin},\cite{MT}.

Suppose that each $X(\cdot,t)$ is the graph of a function $u(\cdot,t)$ and $X_{0}=(x,u_{0}(x))$. Then it follows from (\ref{e1.6})-(\ref{e1.11}) that we deduce that the equations
(\ref{e1.11a})-(\ref{e1.11c}) are equivalent up to diffeomorphisms in  $\Omega$ to
the following evolution equation
\begin{equation}\label{e1.12}
\frac{\partial u}{\partial t}=\sqrt{1-|Du|^{2}} \mathrm{div}(\frac{Du}{\sqrt{1-|Du|^{2}}}),\quad
 \mathrm{in}\quad \Omega_{T}=\Omega\times(0,T),
\end{equation}
associated with the second boundary value problem
\begin{equation}\label{e1.13}
Du(\Omega)=\tilde{\Omega},  \quad t>0,
\end{equation}
and the initial condition
\begin{equation}\label{e1.14}
u=u_{0},  \quad t=0.
\end{equation}
We are now in condition to state the theorems proved in this paper.
\begin{theorem}\label{t1.2}
Assume that $\Omega$, $\tilde{\Omega}$ are bounded, uniformly convex domains with smooth boundary in $\mathbb{R}^{n}$
and $\tilde{\Omega}\subset\subset B_{1}(0)$, $0<\alpha_{0}<1$.
  Then for any given initial function $u_{0}\in C^{2+\alpha_{0}}(\bar{\Omega})$
  which is   uniformly convex and satisfies $Du_{0}(\Omega)=\tilde{\Omega}$,  the  strictly convex solution of (\ref{e1.12})-(\ref{e1.14}) exists
  for all $t\geq 0$ and $u(\cdot,t)$ converges to a function $u^{\infty}(x,t)=u^\infty(x)+C_{\infty}\cdot t$ in $C^{1+\zeta}(\bar{\Omega})\cap C^{4+\alpha}(\bar{D})$ as $t\rightarrow\infty$
  for any $D\subset\subset\Omega$, $\zeta<1$,$0<\alpha<\alpha_{0}$, i.e.,

  $$\lim_{t\rightarrow+\infty}\|u(\cdot,t)-u^{\infty}(\cdot,t)\|_{C^{1+\zeta}(\bar{\Omega})}=0,\qquad
  \lim_{t\rightarrow+\infty}\|u(\cdot,t)-u^{\infty}(\cdot,t)\|_{C^{4+\alpha}(\bar{D})}=0.$$
And $u^{\infty}(x)\in C^{1+1}(\bar{\Omega})\cap C^{4+\alpha}(\Omega)$ is a solution of
\begin{equation}\label{e1.14aa}
\left\{ \begin{aligned}\mathrm{div}(\frac{Du}{\sqrt{1-|Du|^{2}}})&=\frac{C_{\infty}}{\sqrt{1-|Du|^{2}}},
&  x\in \Omega, \\
Du(\Omega)&=\tilde{\Omega}.
\end{aligned} \right.
\end{equation}
The constant $C_{\infty}$ depends only on $\Omega$, $\tilde{\Omega}$. The solution to (\ref{e1.14aa}) is unique up to additions of constants.
\end{theorem}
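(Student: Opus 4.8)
The plan is to run the flow \eqref{e1.12}--\eqref{e1.14} in exactly the way Theorem~\ref{prop1.1} was obtained in \cite{HRY}, \cite{CHY}: establish short-time existence, derive a priori estimates that are \emph{uniform in $t$}, deduce long-time existence, and extract the translating soliton as the limit $t\to\infty$. First I would rewrite \eqref{e1.12} in non-divergence form
\begin{equation*}
\frac{\partial u}{\partial t}=g^{ij}(Du)\,D_{ij}u,\qquad g^{ij}(p)=\delta_{ij}+\frac{p_ip_j}{1-|p|^{2}},
\end{equation*}
which is uniformly parabolic precisely while the graph stays strictly spacelike, $\sup_\Omega|Du|\le\rho<1$. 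Writing $\tilde h$ for a smooth defining function of the uniformly convex domain $\tilde\Omega$, the condition \eqref{e1.13} becomes the oblique boundary condition $\tilde h(Du)=0$ on $\partial\Omega$ (the obliqueness of $D\tilde h(Du)$ relative to $\partial\Omega$ follows from the convexity of $u$), so the linearized problem is a classical parabolic oblique-derivative problem; a standard contraction/fixed-point argument then yields a unique strictly convex, strictly spacelike solution $u\in C^{2+\alpha_0,1+\alpha_0/2}(\bar\Omega\times[0,T_{\max}))$ on a maximal interval. In parallel I would carry along the Legendre transform $u^{*}(y,t)=\sup_{x\in\Omega}\bigl(x\cdot y-u(x,t)\bigr)$ on $\tilde\Omega$: one has $Du^{*}(\tilde\Omega)=\Omega$, $D^{2}u^{*}=(D^{2}u)^{-1}\!\circ Du^{*}$, $u^{*}_t=-u_t\!\circ Du^{*}$, and $u^{*}$ solves the dual flow $-u^{*}_t=\operatorname{tr}\!\bigl(G(y)(D^{2}u^{*})^{-1}\bigr)$, $G=[g^{ij}]$, again a second boundary value problem with the roles of $\Omega$ and $\tilde\Omega$ interchanged. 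This duality is the technical workhorse: strict convexity of $u$ is equivalent to an upper $C^{2}$ bound for $u^{*}$, and strict spacelikeness of $u$ to a positive lower bound on $D^{2}u^{*}$ (and symmetrically).

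The heart of the proof is a family of $t$-uniform estimates. The gradient/spacelikeness bound comes essentially for free: since $u(\cdot,t)$ is strictly convex, $Du(\cdot,t)$ is a local diffeomorphism whose boundary values lie on $\partial\tilde\Omega$, so a degree-theoretic continuity argument forces $Du(\Omega,t)=\tilde\Omega$ for all $t$, whence $\sup_{\Omega\times[0,\infty)}|Du|\le\rho_0:=\sup_{\bar{\tilde\Omega}}|y|<1$ by the hypothesis $\tilde\Omega\subset\subset B_1(0)$ --- this is what prevents the parabolicity constants from deteriorating. For the zeroth-order bound I would differentiate in $t$: $v=u_t$ solves a linear parabolic equation with the differentiated homogeneous oblique condition, so the maximum principle (no spurious boundary extremum, by Hopf's lemma) gives $\min_\Omega u_t(\cdot,0)\le u_t\le\max_\Omega u_t(\cdot,0)$ for all $t$, which together with the gradient bound controls $u$ up to the linear drift $C_\infty t$ (whose value is identified later). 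For the $C^2$ estimate, $D^2u\ge0$ is convexity, and an upper bound for $D^2u$ is obtained by the maximum principle applied to the largest eigenvalue of $D^2u$, the interior term being handled by the structure of $g^{ij}$ and the boundary term by barriers built from the uniform convexity of $\Omega$ and $\tilde\Omega$ and from Hopf's lemma, as in \cite{HRY}, \cite{OK}, \cite{MWW}; applying the same upper bound to the dual flow for $u^{*}$ yields $D^2u^{*}\le C$, i.e. $D^2u\ge C^{-1}I$, so strict convexity, strict spacelikeness and uniform ellipticity of the dual equation all persist. Krylov--Safonov for the differentiated equation plus interior Schauder then upgrade this to uniform $C^{2+\alpha}_{\mathrm{loc}}$ and interior $C^{4+\alpha}$ bounds on $D\subset\subset\Omega$, while on $\bar\Omega$ one has the uniform $C^{1,1}$ bound just derived (hence $C^{1+\zeta}(\bar\Omega)$ for every $\zeta<1$).

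With these estimates the flow cannot break down, so it exists for all $t\ge0$. For convergence I would show that $M(t):=\max_\Omega u_t(\cdot,t)$ is nonincreasing and $m(t):=\min_\Omega u_t(\cdot,t)$ nondecreasing (maximum principle for the linear equation satisfied by $u_t$ with its homogeneous oblique condition), so both limits exist and satisfy $m(t)\le C_\infty\le M(t)$ for their common value $C_\infty=C_\infty(\Omega,\tilde\Omega)$; if $\lim M>\lim m$, a compactness argument produces an eternal solution of the flow on which $\operatorname{osc}_\Omega u_t$ is a positive constant, contradicting the strong maximum principle, so $\operatorname{osc}_\Omega u_t(\cdot,t)\to0$ (with a rate, as in Theorem~\ref{prop1.1}), whence $u_t\to C_\infty$ uniformly. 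Since $|u_t-C_\infty|\le\operatorname{osc}_\Omega u_t$, the function $u(\cdot,t)-C_\infty t$ is bounded in $C^{1,1}(\bar\Omega)\cap C^{4+\alpha}(\bar D)$ and Cauchy in $t$, so by interpolation it converges in $C^{1+\zeta}(\bar\Omega)\cap C^{4+\alpha}(\bar D)$ to some $u^\infty(x)$; passing to the limit in \eqref{e1.12}--\eqref{e1.13} shows $u^\infty\in C^{1+1}(\bar\Omega)\cap C^{4+\alpha}(\Omega)$ solves \eqref{e1.14aa}. For uniqueness up to constants, two solutions $u_1,u_2$ of \eqref{e1.14aa} with constants $C_1,C_2$ give $w=u_1-u_2$ satisfying a homogeneous linear elliptic equation with right-hand side $C_1-C_2$; exploiting the common second boundary condition $Du_1(\Omega)=Du_2(\Omega)=\tilde\Omega$ through the divergence structure and integration by parts (equivalently, through the oblique condition and Hopf's lemma) forces $C_1=C_2$ and then $w\equiv\mathrm{const}$, so $C_\infty$ is intrinsic.

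I expect the main obstacle to be the a priori estimates of the second step: one must propagate, uniformly in time, \emph{both} the strict spacelike condition $|Du|<1$ and the strict convexity of $u$, while the Lorentzian coefficients $g^{ij}(Du)$ degenerate as $|Du|\to1$. The degeneration has to be controlled simultaneously with the nonlinear (oblique) second boundary condition, and this is exactly where the hypothesis $\tilde\Omega\subset\subset B_1(0)$ and the Legendre duality between ``strictly convex'' and ``strictly spacelike'' must be used together; the boundary $C^{2}$ estimate under the oblique condition --- barriers adapted to the uniform convexity of $\Omega$ and $\tilde\Omega$, combined with Hopf's lemma --- is the technical core, after which the remainder follows the scheme of Theorem~\ref{prop1.1}.
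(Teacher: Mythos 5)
Your overall framework (flow, Legendre duality, $t$-uniform estimates, convergence) matches the paper, and your $\dot u$-estimate via Hopf's lemma and your obliqueness/convergence sketches are on the right track. The genuine gap is in the lower $C^2$ bound for $u$ (equivalently the upper bound for $D^2\tilde u$). You write that ``applying the same upper bound to the dual flow for $u^{*}$ yields $D^2u^{*}\le C$,'' but this does not work: the direct upper bound $D^2u\le C\,I$ comes from the very special structure of the operator in (\ref{e1.12}), namely $G(Du,D^2u)=\sqrt{1-|Du|^2}\,F[\mathcal A]$ with $F=\sum_i\kappa_i$ a \emph{trace}. Boundedness of $\dot u$ together with $\kappa_i\ge0$ then directly forces each $\kappa_i$ to be bounded, hence $D^2u$ bounded above (this is Lemma~\ref{lem3.10} in the paper). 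The dual operator $\tilde G(y,D^2\tilde u)=-\mathrm{tr}\bigl(S(y)(D^2\tilde u)^{-1}\bigr)$ is a \emph{sum of reciprocals} of the eigenvalues of $D^2\tilde u$: boundedness of $\dot{\tilde u}$ only controls $\sum_i 1/\mu_i$, which gives a \emph{lower} bound for the eigenvalues $\mu_i$ of $D^2\tilde u$, not an upper one. So the symmetry you invoke is broken, and the claim that the argument transfers is false.

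What the paper actually does at this step is different and is the technical core of the result. It first establishes boundary $C^2$ estimates for $\tilde u$: $\tilde u_{\tilde\beta\tilde\beta}$ is bounded on $\partial\tilde\Omega_T$ via a barrier $\tilde\Psi=-\tilde h(D\tilde u)+C_0 h$ (Lemma~\ref{lem3.13a}), and the double tangential derivative $\tilde u_{\tilde\varsigma\tilde\varsigma}$ is bounded via a barrier argument that uses the concavity-type inequality for $\tilde G$ proved in Lemma~\ref{lem3.13} (which in turn relies on the upper bound $D^2u\le C_{17}I$ already obtained). These two, combined through the obliqueness estimate and the decomposition of an arbitrary direction into tangential and $\tilde\beta$ parts, give $\max_{\partial\tilde\Omega_T}|D^2\tilde u|\le C$ (Lemma~\ref{lem3.4a}). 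To push this bound into the interior the paper does \emph{not} use a scalar maximum principle: it writes the evolution equations for the induced metric, the second fundamental form, and the mean curvature along the flow and applies Hamilton's tensor maximum principle to the tensor $M_{ij}=h_{ij}/H-\epsilon_0 g_{ij}$ (Lemma~\ref{lem3.11}, Corollary~\ref{c3.12}), which propagates the pinching $h_{ij}\ge\epsilon_0 H g_{ij}$ from $\partial\Omega_T$ to $\Omega_T$. Your proposal contains neither the boundary dual-Hessian estimates nor the Hamilton-type tensor argument, and without them the chain from convexity on the boundary to strict convexity everywhere does not close.

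Two smaller remarks. Your route to the upper $D^2u$ bound (maximum principle applied to the largest eigenvalue, with barriers) is a more classical approach and might be made to work, but the paper's argument --- simply reading off $\sum\kappa_i$ from $\dot u$, since $F$ is the trace --- is essentially free once one has the $\dot u$-estimate, and does not require a new barrier construction. Finally, your convergence sketch (oscillation of $u_t$ decays via an eternal-solution/compactness/strong-maximum-principle argument) is a legitimate alternative to the paper's invocation of the black-box convergence result Proposition~\ref{p4.1} from \cite{HRY1}; either route works once the $t$-uniform $C^2$ and obliqueness estimates are in hand, but the latter is the one actually used and spares you the compactness argument.
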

Higher regularity of the solutions obtained in Theorems 1.3  follows
from parabolic and elliptic regularity theory if the data are sufficiently smooth.

The rest of this article is organized as follows. The next section is to present the structure condition for the operator $F$
and  verify the short time existence of the mean curvature flow with the second boundary condition. Thus section 3 is devoted to carry out the strictly oblique estimate and the $C^2$ estimate according to the structure properties of the operators $G$ and $\tilde{G}$. Eventually, we give the long time existence and convergence of the parabolic flow in section 4.

\section{Preliminary}
There are ways to work around the short time existence of classical solutions of (\ref{e1.12})-(\ref{e1.14})(as discussed in \cite{HR}) so that
we have short term existence for the second boundary value problem. One can use the inverse function theorem in Fr$\acute{e}$chet spaces and the theory of linear parabolic equations for oblique boundary condition. The method is along the idea of proving  the short time existence of convex solutions on the second boundary value problem for Lagrangian mean curvature flow. The details can be seen
in the proof of Proposition 3.6 in \cite{WHB}.
\begin{Proposition}\label{p2.1}
According to the conditions in Theorem \ref{t1.2}, there exist some $T>0$ and $u\in C^{2+\alpha,\frac{2+\alpha}{2}}(\bar{\Omega}_{T})$ which depend only on $\Omega$, $\tilde{\Omega}$ and $u_0$, such that $u$ is a unique solution of (\ref{e1.12})-(\ref{e1.14}) and is uniformly convex in $x$ variable.
\end{Proposition}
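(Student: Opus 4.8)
The plan is to reduce the short-time existence for the fully nonlinear oblique problem (\ref{e1.12})--(\ref{e1.14}) to a fixed-point/implicit-function argument, exactly as in the proof of Proposition 3.6 of \cite{WHB}, replacing the special Lagrangian operator there by the spacelike mean curvature operator
\[
Q[u]=\sqrt{1-|Du|^2}\,\mathrm{div}\!\left(\frac{Du}{\sqrt{1-|Du|^2}}\right).
\]
First I would record the linearization: at a spacelike, uniformly convex $v$ the operator $Q$ is uniformly elliptic on $\bar\Omega$ with ellipticity constants controlled by $\sup_\Omega|Dv|<1$ and the convexity of $v$, and the coefficients depend smoothly on $(x,Dv,D^2v)$. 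The boundary condition $Du(\Omega)=\tilde\Omega$ is encoded, as in \cite{HR} and \cite{WHB}, by $h(Du)=0$ where $h$ is the defining function of $\partial\tilde\Omega$; since $\tilde\Omega$ is uniformly convex and $Du_0(\Omega)=\tilde\Omega$ with $u_0$ uniformly convex, the boundary operator $u\mapsto h(Du)$ is strictly oblique along $\partial\Omega$ at $u_0$ (the obliqueness is $Dh(Du_0)\cdot\nu>0$, which is precisely where uniform convexity of both domains enters). Thus the linearized problem at $u_0$ is a linear uniformly parabolic equation with a strictly oblique, smooth boundary condition.

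Next I would set up the iteration in parabolic Hölder spaces $C^{2+\alpha,\frac{2+\alpha}{2}}(\bar\Omega_T)$. Given the compatibility of the initial data (one needs $h(Du_0)=0$ on $\partial\Omega$, which holds, and the first-order compatibility condition obtained by differentiating the flow equation in $t$ and restricting to $t=0$, $\partial_t$ of $h(Du)$ matching $Q[u_0]$ through the boundary operator; this is arranged exactly as in \cite{WHB}), linear parabolic theory for oblique boundary conditions (Lieberman) gives a unique solution of the linearized problem with the stated regularity and with norm controlled on a short time interval. Then either a contraction-mapping argument on a small ball around $u_0$ in $C^{2+\alpha,\frac{2+\alpha}{2}}(\bar\Omega_T)$ for $T$ small, or the inverse function theorem in the appropriate Fréchet/Banach scale as indicated in the text, produces a unique fixed point $u$, which is the desired solution of (\ref{e1.12})--(\ref{e1.14}); shrinking $T$ further if necessary preserves $\sup_{\Omega_T}|Du|<1$ and uniform convexity of $u(\cdot,t)$ in $x$ by continuity from $t=0$. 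The dependence of $T$ and of $\|u\|_{C^{2+\alpha,\frac{2+\alpha}{2}}}$ on $\Omega,\tilde\Omega,u_0$ only is transparent from the construction since every estimate used is in terms of those data.

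The main obstacle is the boundary condition: it is nonlinear in $Du$ and must be shown to be strictly oblique so that the linear parabolic theory applies, and the initial data must be checked to satisfy the correct first-order compatibility condition at the corner $\partial\Omega\times\{0\}$. Both points are handled exactly as in \cite{HR} and in the proof of Proposition 3.6 of \cite{WHB}: uniform convexity of $\Omega$ and $\tilde\Omega$ together with $Du_0(\Omega)=\tilde\Omega$ forces strict obliqueness with a quantitative lower bound, and the compatibility condition is automatically met because $u_0$ already satisfies $Du_0(\Omega)=\tilde\Omega$ (so $h(Du_0)\equiv0$ on $\partial\Omega$) while the time-derivative compatibility is a single linear relation that can be satisfied after the standard modification of $u_0$ near $\partial\Omega$ if needed. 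Since the spacelike mean curvature operator has the same structural features (uniform ellipticity on convex spacelike graphs, smooth dependence on the $2$-jet) that were used for the special Lagrangian operators in Theorem \ref{prop1.1}, no new analytic input is required beyond transcribing that argument, and uniqueness follows from the contraction property (or from the maximum principle for the oblique linearized problem applied to the difference of two solutions).
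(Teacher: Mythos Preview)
Your proposal is correct and follows essentially the same approach as the paper: the paper does not give a detailed proof but refers to the inverse function theorem in Fr\'echet spaces combined with linear parabolic theory for oblique boundary conditions, exactly as in the proof of Proposition~3.6 in \cite{WHB}, and your outline fleshes out precisely this strategy. Your additional remarks on verifying strict obliqueness at $u_0$, checking compatibility conditions, and preserving spacelikeness and convexity by shrinking $T$ are all appropriate elaborations of what the paper leaves implicit.
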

  Based on the regularity theory for parabolic equations \cite{GM}, we assume that $u$ is strictly convex  solution to (\ref{e1.12})-(\ref{e1.14}) in the class $$C^{2+\alpha,1+\frac{\alpha}{2}}(\bar{\Omega}_{T})\cap C^{4+\alpha,2+\frac{\alpha}{2}}(\Omega_{T})$$ for some $T>0$.

Define $$F(\kappa_{1},\cdots, \kappa_{n})=\sum_{i=1}^n\kappa_{i},$$
and $$\Gamma^+_n:=\left\{(\kappa_1,\cdots,\kappa_n)\in \mathbb{R}^n:\kappa_i>0,\ i=1,\cdots,n\right\}.$$
Then $F$ is  a smooth symmetric function defined on $\bar{{\Gamma}^+_n}$ and satisfying

\begin{equation}\label{e1.2.0}
\sum_{i=1}^n\frac{\partial F}{\partial \kappa_i}\kappa_i\leq F,
\end{equation}
\begin{equation}\label{e1.2.a}
\frac{\partial F}{\partial \kappa_i}>0,\ \ 1\leq i\leq n\ \  \text{on}\ \  \Gamma^+_n,
\end{equation}
\begin{equation}\label{e1.2.ab}
\sum_{i=1}^n\frac{\partial F}{\partial \kappa_i}=n,\ \ 1\leq i\leq n\ \  \text{on}\ \  \Gamma^+_n,
\end{equation}
and
\begin{equation}\label{e1.2.1}
\left(\frac{\partial^2 F}{\partial \kappa_i\partial \kappa_j}\right)\leq 0\ \  \text{on}\ \  \bar{\Gamma^+_n}.
\end{equation}

Throughout the following, Einstein's convention of summation over repeated indices will be adopted.

The principal curvatures of $M\subset \mathbb{R}^{n,1}$ are the eigenvalues of the second fundamental form
$h_{ij}$ relative to $g_{ij}$, i.e, the eigenvalues of the mixed tensor $h^{j}_{i}\equiv h_{ik}g^{kj}$.
By \cite{LLJ} we remark that they are the eigenvalues of the symmetric matrix
\begin{equation}\label{e2.71}
a_{ij}=\frac{1}{v}b^{ik}D_{kl}ub^{lj},
\end{equation}
where $v=\sqrt{1-|Du|^{2}}$ and $b^{ij}$ is the positive square root of $g^{ij}$ taking the form
\begin{equation*}
b^{ij}=\delta_{ij}-\frac{D_{i}uD_{j}u}{v(1+v)}.
\end{equation*}
Explicitly we have shown
\begin{equation*}\label{e2.8}
\begin{aligned}
a_{ij}=&\frac{1}{v}\{D_{ij}u-\frac{D_{i}uD_{l}uD_{jl}u}{v(1+v)}-\frac{D_{j}uD_{l}uD_{il}u}{v(1+v)}\\
          &+\frac{D_{i}uD_{j}uD_{k}uD_{l}uD_{kl}u}{v^{2}(1+v)^{2}}\}.
\end{aligned}
\end{equation*}
Then by (\ref{e2.71}) one can deduce that
\begin{equation*}\label{e2.81}
D_{ij}u=vb_{ik}a_{kl}b_{lj}
\end{equation*}
where $b_{ij}$  is the inverse of $b^{ij}$ expressed as
\begin{equation*}
b_{ij}=\delta_{ij}+\frac{D_{i}uD_{j}u}{1+v}.
\end{equation*}

Denote $\mathcal{A}=[a_{ij}]$ and $F[\mathcal{A}]=\sum_{i=1}^n\kappa_{i}$, where $(\kappa_1,\cdots, \kappa_n)$ are the  eigenvalues of
the symmetric matrix $[a_{ij}]$. Then the properties of the operator $F$ are reflected in (\ref{e1.2.0})-(\ref{e1.2.1}).
It follows from (\ref{e1.2.a}) that we can show that
\begin{equation*}
F_{ij}[\mathcal{A}]\xi_{i}\xi_{j}>0 \quad \mathrm{for}\quad \mathrm{all}\quad \xi\in \mathbb{R}^{n}-\{0\}
\end{equation*}
where
\begin{equation*}
F_{ij}[\mathcal{A}]=\frac{\partial F[\mathcal{A}]}{\partial a_{ij}}.
\end{equation*}
From \cite{JS} we see that $[F_{ij}]$ diagonal if $\mathcal{\mathcal{A}}$ is diagonal, and in this case
\begin{equation*}
[F_{ij}]=\mathrm{diag}(\frac{\partial F}{\partial \kappa_{1}},\cdots, \frac{\partial F}{\partial \kappa_{n}}).
\end{equation*}
If $u$ is convex, by (\ref{e2.71}) we deduce that the  eigenvalues of the  matrix $[a_{ij}]$ must be in $\bar{\Gamma^+_n}$.
Then (\ref{e1.2.1}) implies  that
\begin{equation*}
F_{ij,kl}[\mathcal{A}]\eta_{ij}\eta_{kl}\leq 0
\end{equation*}
for any real symmetric matrix $[\eta_{ij}]$, where
\begin{equation*}
F_{ij,kl}[\mathcal{A}]=\frac{\partial^{2}F[\mathcal{A}]}{\partial a_{ij}\partial a_{kl}}.
\end{equation*}
According to the equation (\ref{e1.12}), we consider the nonlinear differential operators of the type
\begin{equation*}\label{e2.10}
u_{t}=G(Du,D^{2}u)
\end{equation*}
where $G(Du,D^{2}u)=\sqrt{1-|Du|^{2}}F[\mathcal{A}]$.
As in \cite{J}, differentiating this once we have
\begin{equation*}\label{e2.10}
D_{tk}u=G_{ij}D_{ijk}u+G_{i}D_{ik}u
\end{equation*}
where we use the notation
\begin{equation*}
G_{ij}=\frac{\partial G}{\partial r_{ij}},\quad G_{i}=\frac{\partial G}{\partial p_{i}}
\end{equation*}
with $r$ and $p$  representing for the second derivative and gradient variables respectively.
So as to prove the strict obliqueness estimate for the problem (\ref{e1.12})-(\ref{e1.14}), we need to recall some expressions from \cite{J} for the derivatives of G.  We have
\begin{equation}\label{e2.11}
G_{ij}=\sqrt{1-|Du|^{2}}F_{kl}\frac{\partial a_{kl}}{\partial r_{ij}}=b^{ik}F_{kl}b^{lj}
\end{equation}
and
\begin{equation*}\label{e2.12}
G_{i}=vF_{kl}\frac{\partial a_{kl}}{\partial p_{i}}-\frac{p_{i}}{v}F=vF_{kl}\frac{\partial}{\partial p_{i}}(\frac{1}{v}b^{kp}b^{ql})D_{pq}u-\frac{p_{i}}{v}F.
\end{equation*}
A simple calculation yields
\begin{equation*}\label{e2.13}
G_{i}=-\frac{D_{i}u}{v}F_{kl}a_{kl}-2F_{kl}a_{lm}b^{ik}D_{m}u-\frac{D_{i}u}{v}F.
\end{equation*}

We observe that $\mathcal{T}_{G}=\sum^{n}_{i=1}G_{ii}$ is the trace of a product of three matrices by (\ref{e2.11}), so it is invariant under orthogonal transformations. Hence, to compute $\mathcal{T}_{G}$, we may assume for now that $[a_{ij}]$ is diagonal. By virtue of (\ref{e1.13}) and $\tilde{\Omega}\subset\subset B_{1}(0)$, we obtain that $Du$ and $\frac{1}{v}$  are bounded. Then the eigenvalues of $[b^{ij}]$ are bounded between two controlled positive constants. Since (\ref{e2.11}), it follows that there exist positive constants $\sigma_{1}$,$\sigma_{2}$ depending only on the least upper bound of $|Du|$ in the set $\Omega$, such that
\begin{equation}\label{e2.14}
\sigma_{1}\mathcal{T}\leq\mathcal{T}_{G}\leq\sigma_{2}\mathcal{T}
\end{equation}
where $\mathcal{T}=\sum^{n}_{i=1}F_{ii}$.
By the concavity of $F$ and the positive definiteness of $[F_{ij}a_{ij}]$ imply that$[F_{ij}a_{ij}]$ is controlled by $F$, i.e.
\begin{equation*}\label{e2.15}
F_{ij}a_{ij}=\sum^{n}_{i=1}F_{i}\kappa_{i}\leq F(\kappa_{1},\ldots,\kappa_{n}).
\end{equation*}
Thus
\begin{equation}\label{e2.16}
|G_{i}|\leq \sigma_{3}F(\kappa_{1},\ldots,\kappa_{n})
\end{equation}
where $\sigma_{3}$ depends only on the known data.

By a direct computation and $\tilde{\Omega}\subset\subset B_{1}(0)$,  there exist two positive constants
$\sigma_{4}$,$\sigma_{5}$ depending only on $\Omega$ and $\tilde{\Omega}$, such that
\begin{equation}\label{e2.8}
\sigma_{4}\sum^{n}_{i=1}\frac{\partial F}{\partial \kappa_{i}}\kappa^{2}_{i}\leq \sum^{n}_{i=1}\frac{\partial G}{\partial \lambda_{i}}\lambda^{2}_{i}\leq \sigma_{5}\sum^{n}_{i=1}\frac{\partial F}{\partial \kappa_{i}}\kappa^{2}_{i}
\end{equation}
where $\lambda_1,\cdots, \lambda_n$ are the eigenvalues of Hessian matrix $D^2 u$ at $x \in \Omega$.

By making use of the methods on the second boundary value problems for equations of Monge-Amp\`{e}re type \cite{JU}, the parabolic boundary condition in (\ref{e1.13}) can be reformulated as
$$h(Du)=0,\qquad x\in \partial\Omega,\quad t>0,$$
where we need
\begin{deff}
A smooth function $h:\mathbb{R}^n\rightarrow\mathbb{R}$ is called the defining function of $\tilde{\Omega}$ if
$$\tilde{\Omega}=\{p\in\mathbb{R}^{n} : h(p)>0\},\quad |Dh|_{{\partial\tilde{\Omega}}}=1,$$
and there exists $\theta>0$ such that for any $p=(p_{1},\cdots, p_{n})\in \tilde{\Omega}$ and $\xi=(\xi_{1}, \cdots, \xi_{n})\in \mathbb{R}^{n}$,
$$\frac{\partial^{2}h}{\partial p_{i}\partial p_{j}}\xi_{i}\xi_{j}\leq -\theta|\xi|^{2}.$$
\end{deff}
Thus the mean curvature flow (\ref{e1.12})-(\ref{e1.14}) is equivalent to the evolution problem
\begin{equation}\label{e2.16}
\left\{ \begin{aligned}\frac{\partial u}{\partial t}&=G(Du,D^{2}u),\ \ && t>0,\  x\in \Omega, \\
h(Du)&=0,&& t>0,\  x\in\partial\Omega,\\
 u&=u_{0}, &&  t=0,\  x\in \Omega.
\end{aligned} \right.
\end{equation}
We can also define $\tilde{h}$ as the defining function of $\Omega$. That is,
$$\Omega=\{\tilde{p}\in\mathbb{R}^{n} : \tilde{h}(\tilde{p})>0\},\ \ \ |D\tilde{h}|_{\partial\Omega}=1, \ \ \ D^2\tilde{h}\leq -\tilde{\theta}I,$$
where $\tilde{\theta}$ is some positive constant. For each $t$, we will use the Legendre transform of $u$ which is the convex function $\tilde{u}$ on $\tilde{\Omega}=Du(\Omega)$ defined by
\begin{equation*}
\tilde{u}(y,t)=x\cdot Du(x,t)-u(x,t)
\end{equation*}
and
\begin{equation*}
y=Du(x,t).
\end{equation*}
It follows that
\begin{equation*}
\frac{\partial \tilde{u}}{\partial y_{i}}=x_{i},\frac{\partial ^{2}\tilde{u}}{\partial y_{i}\partial y_{j}}=u^{ij}(x)
\end{equation*}
where $[u^{ij}]=[D^{2}u]^{-1}$. Then $\tilde{u}$ satisfies
\begin{equation}\label{e2.17}
\left\{ \begin{aligned}
\frac{\partial \tilde{u}}{\partial t}&=\tilde{G}(y,D^{2}\tilde{u}),\ \ && T>t>0,\ \tilde{x}\in \tilde{\Omega}, \\
\tilde{h}(D\tilde{u})&=0,&& T>t>0,\ \tilde{x}\in\partial\tilde{\Omega},\\
 \tilde{u}&=\tilde{u}_{0}, &&  t=0,\ \tilde{x}\in \tilde{\Omega},
\end{aligned} \right.
\end{equation}
where $\tilde{G}(y,D^{2}\tilde{u})=-G(y,D^{2}\tilde{u}^{-1})$, and $\tilde{u}_{0}$ is the Legendre transformation of $u_{0}$. The unit inward normal vector of $\partial\Omega$ can be expressed by $\nu=D\tilde{h}$. For the same reason, $\tilde{\nu}=Dh$, where $\tilde{\nu}=(\tilde{\nu}_{1}, \tilde{\nu}_{2},\cdots,\tilde{\nu}_{n})$ is the unit inward normal vector of $\partial\tilde{\Omega}$.

\section{The strict obliqueness estimate and the $C^2$ estimate}
For the convenience, we denote $\beta=(\beta^{1}, \cdots, \beta^{n})$ with $\beta^{i}:=h_{p_{i}}(Du)$, and $\nu=(\nu_{1},\cdots,\nu_{n})$ as the unit inward normal vector at $x\in\partial\Omega$. The expression of the inner product is
\begin{equation*}
\langle\beta, \nu\rangle=\beta^{i}\nu_{i}.
\end{equation*}
\begin{lemma}\label{l3.1}($\dot{u}$-estimates) Assume that $\Omega$, $\tilde{\Omega}$ are bounded, uniformly convex domains with smooth boundary in $\mathbb{R}^{n}$ and  $\tilde{\Omega}\subset\subset B_{1}(0)$, $0<\alpha_{0}<1$, $u_{0}\in C^{2+\alpha_{0}}(\bar{\Omega})$
  which is   uniformly convex and satisfies $Du_{0}(\Omega)=\tilde{\Omega}$. If the strictly convex solution to (\ref{e2.16}) exists, then
\begin{equation*}
\min_{\bar{\Omega}}G(Du_{0}, D^{2}u_{0})\leq\dot{u}\leq\max_{\bar{\Omega}}G(Du_{0}, D^{2}u_{0}),
\end{equation*}
where $\dot{u}:=\frac{\partial u}{\partial t}$.
\end{lemma}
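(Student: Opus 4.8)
The plan is to differentiate the evolution equation in time and apply the parabolic maximum principle to the function $\dot u = u_t$, exploiting the oblique boundary condition to rule out a boundary extremum. First I would set $w = \dot u$ and differentiate the PDE $u_t = G(Du, D^2u)$ with respect to $t$, obtaining the linear parabolic equation
\begin{equation*}
\partial_t w = G_{ij}D_{ij}w + G_i D_i w \qquad \text{in } \Omega_T,
\end{equation*}
exactly as recorded in the Preliminary section; here $[G_{ij}] = [b^{ik}F_{kl}b^{lj}]$ is positive definite by \eqref{e2.11} and \eqref{e1.2.a} since $u$ is strictly convex, so this is a genuine (linear, non-divergence form) parabolic operator with no zeroth-order term. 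Consequently $w$ satisfies the weak maximum principle: its maximum and minimum over $\bar\Omega_T$ are attained either at $t=0$ or on the lateral boundary $\partial\Omega \times (0,T]$.

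Next I would handle the lateral boundary. Differentiating the boundary condition $h(Du) = 0$ (valid for all $t>0$, $x\in\partial\Omega$) in $t$ gives $h_{p_i}(Du)\,D_i u_t = 0$, i.e.
\begin{equation*}
\beta^i D_i w = \langle \beta, Dw\rangle = 0 \qquad \text{on } \partial\Omega \times (0,T].
\end{equation*}
The key structural fact is the strict obliqueness $\langle \beta, \nu\rangle > 0$ on $\partial\Omega$, where $\nu$ is the inward unit normal — this is precisely the estimate the paper sets up to prove (and which I would invoke, or alternatively note follows from the convexity of the defining functions $h$ and $\tilde h$ together with $Du(\Omega) = \tilde\Omega$). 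Given strict obliqueness, if $w$ attained an interior-in-time lateral maximum at a point $x_0 \in \partial\Omega$, then by Hopf's lemma (the normal derivative $\partial w/\partial\nu < 0$ there, since $w$ is non-constant — or if $w$ is constant there is nothing to prove) we would get $\langle \beta, Dw\rangle(x_0) = \langle\beta,\nu\rangle\,\partial_\nu w + (\text{tangential part, which vanishes at a boundary max along }\partial\Omega)$ having a definite sign, contradicting $\langle\beta,Dw\rangle = 0$. The same argument applied to $-w$ rules out a lateral minimum. Hence both the maximum and minimum of $w$ over $\bar\Omega_T$ are attained at $t=0$, which gives
\begin{equation*}
\min_{\bar\Omega} G(Du_0, D^2u_0) \le \dot u \le \max_{\bar\Omega} G(Du_0, D^2u_0).
\end{equation*}

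The main obstacle is the boundary analysis: one must be careful that $\langle\beta,Dw\rangle = 0$ is a genuinely oblique condition (not merely tangential), so that Hopf's lemma can be applied to derive the contradiction — this is where strict obliqueness $\langle\beta,\nu\rangle \ge c_0 > 0$ is essential, and it is the one ingredient not entirely internal to this short argument. A secondary technical point is regularity: the differentiation in $t$ is justified because $u \in C^{4+\alpha, 2+\alpha/2}(\Omega_T)$ in the interior and the flow is assumed smooth enough (as stated after Proposition \ref{p2.1}), so $w = \dot u$ is $C^{2,1}$ up to the lateral boundary and Hopf's lemma applies; one also uses that $G$ is smooth in its arguments on the relevant (strictly spacelike, strictly convex) range, so the coefficients $G_{ij}, G_i$ are continuous. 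Everything else is a routine invocation of the parabolic maximum principle.
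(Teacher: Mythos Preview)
Your proposal is correct and matches the paper's proof: differentiate the equation in $t$, derive $\dot u_\beta = 0$ on $\partial\Omega$ from the boundary condition, and use Hopf's lemma (via the pointwise obliqueness $\langle\beta,\nu\rangle>0$, which here follows from the strict convexity of $u$, not from the later uniform estimate of Lemma~\ref{llll3.4}) to force the extrema of $\dot u$ onto the initial slice. The only cosmetic difference is that the paper states the Hopf contradiction directly as $\dot u_\beta(x_0,t_0)\neq 0$ versus $\dot u_\beta = 0$, without decomposing $\beta$ into normal and tangential parts.
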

\begin{proof} From (\ref{e2.16}),  differentiating the equation yields
$$\frac{\partial(\dot{u}) }{\partial t}-G_{ij}\partial_{ij}(\dot{u})-G_{i}\partial_{i}(\dot{u})=0.$$
Using the maximum principle, we deduce that
$$\min_{\bar{\Omega}_{T}}(\dot{u}) =\min_{\partial\bar{\Omega}_{T}}(\dot{u}).$$
Without loss of generality, we assume that $\dot{u}\neq constant$. If there exists $x_{0}\in \partial\Omega$, $t_{0}>0$, such that $\dot{u}(x_{0},t_{0})=\min_{\bar{\Omega}_{T}}(\dot{u})$. On the one hand, since $\langle\beta, \nu\rangle>0$, by the Hopf Lemma (cf.\cite{LL1},\cite{LL2}) for parabolic equations, there must hold in the following
$$\dot{u}_{\beta}(x_{0},t_{0})\neq 0.$$
On the other hand,  we differentiate the function on the boundary condition and then obtain
$$\dot{u}_{\beta}=h_{p_{k}}(Du)\frac{\partial \dot{u}}{\partial x_{k}}=\frac{\partial h(Du)}{\partial t}=0.$$
It is a contradiction. So we get that
$$\dot{u}\geq \min_{\bar{\Omega}_{T}}(\dot{u})
=\min_{\partial\bar{\Omega}_{T}|_{t=0}}(\dot{u})=\min_{\bar{\Omega}}\left(G(Du_{0}, D^{2}u_{0})\right)$$
For the same reason, we have
$$\dot{u}\leq \max_{\bar{\Omega}_{T}}(\dot{u})
=\max_{\partial\bar{\Omega}_{T}|_{t=0}}(\dot{u})=\max_{\bar{\Omega}}\left(G(Du_{0}, D^{2}u_{0})\right).$$
Putting these facts together, the assertion follows.
\end{proof}
For the same reason, we also obtain the esttimates of dual type.
\begin{lemma}\label{l3.2}($\dot{\tilde{u}}$-estimates) Assume that $\Omega$, $\tilde{\Omega}$ are bounded, uniformly convex domains with smooth boundary in $\mathbb{R}^{n}$ and  $\tilde{\Omega}\subset\subset B_{1}(0)$, $0<\alpha_{0}<1$, $u_{0}\in C^{2+\alpha_{0}}(\bar{\Omega})$
  which is   uniformly convex and satisfies $Du_{0}(\Omega)=\tilde{\Omega}$. If the strictly convex solution to (\ref{e2.17}) exists, then
\begin{equation*}
\min_{\bar{\Omega}}\tilde{G}(y, D^{2}\tilde{u}_{0})\leq\dot{\tilde{u}}\leq\max_{\bar{\Omega}}\tilde{G}(Du_{0}, D^{2}\tilde{u}_{0}),
\end{equation*}
where $\dot{\tilde{u}}:=\frac{\partial \tilde{u}}{\partial t}$.
\end{lemma}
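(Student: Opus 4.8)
The plan is to follow the proof of Lemma~\ref{l3.1}, now carried out for the Legendre-dual problem (\ref{e2.17}) in place of the primal flow (\ref{e1.12})--(\ref{e1.14}). First I would differentiate the interior equation $\partial_t\tilde u=\tilde G(y,D^2\tilde u)$ in the time variable, producing the linear parabolic equation
$$\frac{\partial(\dot{\tilde u})}{\partial t}-\tilde G_{ij}\partial_{ij}(\dot{\tilde u})-\tilde G_{i}\partial_{i}(\dot{\tilde u})=0,$$
where $\tilde G_{ij}$ and $\tilde G_i$ denote the derivatives of $\tilde G$ with respect to the Hessian and gradient variables, and $[\tilde G_{ij}]$ is positive definite: since $\tilde G(y,D^2\tilde u)=-G\big(y,(D^2\tilde u)^{-1}\big)$ and the Legendre transform merely inverts the Hessian, $\tilde G$ inherits strict ellipticity from $G$ on the convex cone where the eigenvalues of $\mathcal{A}$ lie. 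By the parabolic maximum principle, either $\dot{\tilde u}$ is constant (and there is nothing to prove) or $\min_{\overline{\tilde\Omega}_T}\dot{\tilde u}=\min_{\partial\overline{\tilde\Omega}_T}\dot{\tilde u}$, with the analogous identity for the maximum.

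Next I would exclude an extremum on the lateral boundary. Suppose $\dot{\tilde u}$ attains its minimum over $\overline{\tilde\Omega}_T$ at a point $(y_0,t_0)$ with $y_0\in\partial\tilde\Omega$ and $t_0>0$. Differentiating the boundary condition $\tilde h(D\tilde u)=0$ in $t$ gives
$$\dot{\tilde u}_{\tilde\beta}=\tilde h_{p_k}(D\tilde u)\,\partial_k(\dot{\tilde u})=\frac{\partial\,\tilde h(D\tilde u)}{\partial t}=0,$$
where $\tilde\beta=(\tilde h_{p_1}(D\tilde u),\dots,\tilde h_{p_n}(D\tilde u))$. Because $D\tilde u$ maps $\partial\tilde\Omega$ onto $\partial\Omega$ and $D\tilde h$ restricted to $\partial\Omega$ equals the inward unit normal $\nu$, the vector $\tilde\beta(y)$ is precisely $\nu$ at the corresponding point $x=D\tilde u(y)\in\partial\Omega$; hence, with $\tilde\nu=Dh$ the inward unit normal of $\partial\tilde\Omega$, the dual obliqueness satisfies $\langle\tilde\beta,\tilde\nu\rangle=\langle\nu(x),\tilde\nu(y)\rangle=\langle\beta,\nu\rangle>0$, i.e. it coincides pointwise with the obliqueness already invoked for the primal problem. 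Strict obliqueness together with the Hopf Lemma for parabolic equations then forces $\dot{\tilde u}_{\tilde\beta}(y_0,t_0)\neq0$, a contradiction.

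Consequently the extrema of $\dot{\tilde u}$ over $\overline{\tilde\Omega}_T$ are attained at $t=0$, which yields $\dot{\tilde u}\ge\min_{\overline{\tilde\Omega}}\tilde G(y,D^2\tilde u_0)$; applying the identical argument to $-\dot{\tilde u}$ gives the matching upper bound, and the lemma follows. The one step that requires genuine care — and the one I expect to be the main obstacle — is the strict obliqueness $\langle\tilde\beta,\tilde\nu\rangle>0$ along $\partial\tilde\Omega$. As observed above, however, this is literally the same quantity as the obliqueness used for the primal flow, transported through the Legendre correspondence between $\partial\Omega$ and $\partial\tilde\Omega$, so once that estimate (which rests on the uniform convexity of $\Omega$ and $\tilde\Omega$) is in hand, no additional argument is needed.
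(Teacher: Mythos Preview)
Your proposal is correct and mirrors the paper's approach exactly; the paper itself dispatches this lemma with the single line ``For the same reason, we also obtain the estimates of dual type.'' One minor slip: since $\tilde G=\tilde G(y,D^2\tilde u)$ does not depend on $D\tilde u$, the linearized equation for $\dot{\tilde u}$ has no first-order term $\tilde G_i\,\partial_i(\dot{\tilde u})$---the operator is simply $\partial_t-\tilde G_{ij}\partial_{ij}$---but this does not affect the argument.
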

With this in hand, we can obtain the following structure conditions of the operator $F$ as $u$ is the strictly convex solution of (\ref{e2.16}).
\begin{lemma}\label{l3.3} Assume that $\Omega$, $\tilde{\Omega}$ are bounded, uniformly convex domains with smooth boundary in $\mathbb{R}^{n}$ and  $\tilde{\Omega}\subset\subset B_{1}(0)$, $0<\alpha_{0}<1$, $u_{0}\in C^{2+\alpha_{0}}(\bar{\Omega})$
  which is   uniformly convex and satisfies $Du_{0}(\Omega)=\tilde{\Omega}$. If the strictly convex solution to (\ref{e2.16}) exists. Then there exist positive constants $\Lambda_1$ and $\Lambda_2$, depending  on $u_{0}$,  such that there holds
  \begin{equation}\label{e3.1}
 \Lambda_1\leq\sum^{n}_{i=1}\frac{\partial F}{\partial \kappa_{i}}\leq \Lambda_2,
 \end{equation}
and
\begin{equation}\label{e3.2}
\Lambda_1\leq\sum^{n}_{i=1}\frac{\partial F}{\partial \kappa_{i}}\kappa^{2}_{i}\leq \Lambda_2.
\end{equation}
\end{lemma}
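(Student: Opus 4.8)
The plan is to obtain both two-sided bounds directly from the $\dot u$-estimate of Lemma~\ref{l3.1}, combined with the explicit (linear) form of $F$, the containment $\tilde\Omega\subset\subset B_1(0)$, and the strict convexity of $u$. The identity that drives everything is
\[
\dot u\;=\;G(Du,D^2u)\;=\;\sqrt{1-|Du|^{2}}\,F[\mathcal{A}]\;=\;v\sum_{i=1}^{n}\kappa_i ,
\]
where $\kappa_1,\dots,\kappa_n$ are the principal curvatures of $X(\cdot,t)$, i.e.\ the eigenvalues of $[a_{ij}]$; hence a two-sided bound for $\dot u$ is, after dividing by $v$, a two-sided bound for the mean curvature $\sum_i\kappa_i$.

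\textbf{Step 1: uniform control of $v$ and a positive lower bound for $\dot u$.} Since $\tilde\Omega\subset\subset B_1(0)$, the number $\rho:=\sup_{p\in\overline{\tilde\Omega}}|p|$ satisfies $\rho<1$. Because $Du(\cdot,t)$ maps $\bar\Omega$ into $\overline{\tilde\Omega}$ for every $t\ge0$ (the second boundary condition $Du(\Omega,t)=\tilde\Omega$), one has $|Du|\le\rho$ throughout $\bar\Omega_T$, so $\sqrt{1-\rho^{2}}\le v=\sqrt{1-|Du|^{2}}\le1$. Next, since $u_0$ is uniformly convex, by the expression \eqref{e2.71} for $a_{ij}$ the symmetric matrix $\mathcal{A}_0=[a_{ij}(Du_0,D^2u_0)]$ is positive definite on $\bar\Omega$; hence $G(Du_0,D^2u_0)=v_0\sum_i\kappa_i(\mathcal{A}_0)>0$ there, and by compactness
\[
0<c_0:=\min_{\bar\Omega}G(Du_0,D^2u_0)\;\le\;C_0:=\max_{\bar\Omega}G(Du_0,D^2u_0)<\infty .
\]
Lemma~\ref{l3.1} then yields $c_0\le\dot u\le C_0$ on $\bar\Omega_T$.

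\textbf{Step 2: bounding the mean curvature, then \eqref{e3.1} and \eqref{e3.2}.} Dividing $\dot u=vF[\mathcal{A}]$ by $v$ and invoking Step~1,
\[
c_1:=c_0\;\le\;F[\mathcal{A}]\;=\;\sum_{i=1}^{n}\kappa_i\;\le\;\frac{C_0}{\sqrt{1-\rho^{2}}}\;=:\;C_1 .
\]
Moreover, as noted after \eqref{e2.71}, strict convexity of $u$ forces $\mathcal{A}$ to be positive definite, so $\kappa_i>0$ for all $i$ and $\kappa=(\kappa_1,\dots,\kappa_n)\in\Gamma^+_n$; consequently \eqref{e1.2.ab} (equivalently $\partial F/\partial\kappa_i\equiv1$, since $F$ is the mean curvature) gives $\sum_i\partial F/\partial\kappa_i=n$, which is \eqref{e3.1}. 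For \eqref{e3.2}, using $\partial F/\partial\kappa_i=1$, the positivity of the $\kappa_i$, and the Cauchy--Schwarz inequality,
\[
\frac{c_1^{\,2}}{n}\;\le\;\frac{1}{n}\Big(\sum_{i=1}^{n}\kappa_i\Big)^{2}\;\le\;\sum_{i=1}^{n}\frac{\partial F}{\partial\kappa_i}\,\kappa_i^{2}\;=\;\sum_{i=1}^{n}\kappa_i^{2}\;\le\;\Big(\sum_{i=1}^{n}\kappa_i\Big)^{2}\;\le\;C_1^{\,2}.
\]
Taking $\Lambda_1:=\min\{n,\,c_1^{2}/n\}$ and $\Lambda_2:=\max\{n,\,C_1^{2}\}$ gives one pair of positive constants, depending only on $u_0$ and $\tilde\Omega$, for which \eqref{e3.1} and \eqref{e3.2} both hold.

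\textbf{Where the work really is.} No serious PDE obstacle arises: once the identity $\dot u=v\sum_i\kappa_i$ is recorded, the statement reduces to Lemma~\ref{l3.1} plus elementary algebra (the normalization $\sum_i\partial F/\partial\kappa_i=n$ and Cauchy--Schwarz). The one point demanding care, and the place where the hypotheses are genuinely used, is the strict positivity of $c_0$: this requires the mean curvature of the \emph{initial} graph to be bounded below by a positive constant---which is why uniform convexity of $u_0$ on the compact $\bar\Omega$ is needed---and the spacelike factor $v$ not to degenerate, which is where $\tilde\Omega\subset\subset B_1(0)$ enters. These are conditions at $t=0$ only, and they suffice precisely because Lemma~\ref{l3.1} transports the bound on $\dot u$ to all $t>0$.
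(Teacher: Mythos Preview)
Your proof is correct and follows essentially the same route as the paper's: both use Lemma~\ref{l3.1} together with the identity $\dot u=vF[\mathcal{A}]$ and the containment $\tilde\Omega\subset\subset B_1(0)$ to bound $\sum_i\kappa_i$ above and below, then invoke $\partial F/\partial\kappa_i\equiv1$ and the elementary inequality $\tfrac{1}{n}(\sum\kappa_i)^2\le\sum\kappa_i^2\le(\sum\kappa_i)^2$ to finish. Your write-up is in fact a bit more careful than the paper's in explaining why $c_0>0$.
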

\begin{proof}
 The operator is of the form  $G(Du,D^{2}u)=\sqrt{1-|Du|^{2}}F[\mathcal{A}]$. We observe that
 $$F=\sum_{i=1}^n\kappa_{i}$$ and $$\frac{\partial F}{\partial \kappa_{i}}=1.$$
Then $$\sum^{n}_{i=1}\frac{\partial F}{\partial \kappa_{i}}=n$$ and $$\min_{p\in \tilde{\Omega}}\frac{1}{\sqrt{1-|p|^{2}}}\min_{\bar{\Omega},t\geq 0}G(Du,D^{2}u)\leq  F\leq \max_{p\in \tilde{\Omega}}\frac{1}{\sqrt{1-|p|^{2}}}\max_{\bar{\Omega},t\geq 0}G(Du,D^{2}u).$$
Using Lemma \ref{l3.1} and the equation (\ref{e2.16}) we deduce that  $$\min_{p\in \tilde{\Omega}}\frac{1}{\sqrt{1-|p|^{2}}}\min_{\bar{\Omega}}G(Du_{0},D^{2}u_{0})\leq  \sum_{i=1}^n\kappa_{i}\leq \max_{p\in \tilde{\Omega}}\frac{1}{\sqrt{1-|p|^{2}}}\max_{\bar{\Omega}}G(Du_{0},D^{2}u_{0}).$$
Combining $$\sum_{i=1}^n\kappa^{2}_{i}\leq(\sum_{i=1}^n\kappa_{i})^{2}\leq n\sum_{i=1}^n\kappa^{2}_{i}$$
with $$\sum^{n}_{i=1}\frac{\partial F}{\partial \kappa_{i}}\kappa^{2}_{i}=\sum^{n}_{i=1}\kappa^{2}_{i},$$
we obtain the desired results.
\end{proof}
By the above arguments,  we readily deduce  the strucure conditions from (\ref{e2.14})-(\ref{e2.8}) for the operator $G$.
\begin{Corollary}\label{c3.3}
 Assume that $\Omega$, $\tilde{\Omega}$ are bounded, uniformly convex domains with smooth boundary in $\mathbb{R}^{n}$ and  $\tilde{\Omega}\subset\subset B_{1}(0)$, $0<\alpha_{0}<1$, $u_{0}\in C^{2+\alpha_{0}}(\bar{\Omega})$
  which is   uniformly convex and satisfies $Du_{0}(\Omega)=\tilde{\Omega}$. If the strictly convex solution to (\ref{e2.16}) exists.
There   exists  uniform  positive constants  $\Lambda_3,\Lambda_4,\Lambda_5$,  depending  on the known data,  such that there holds
  \begin{equation}\label{e3.3}
 |G_{i}|\leq \Lambda_3,
 \end{equation}
\begin{equation}\label{e3.4}
\Lambda_4\leq \sum^{n}_{i=1}\frac{\partial G}{\partial \lambda_{i}}\leq \Lambda_5
\end{equation}
\begin{equation}\label{e3.5}
\Lambda_4\leq \sum^{n}_{i=1}\frac{\partial G}{\partial \lambda_{i}}\lambda^{2}_{i}\leq \Lambda_5
\end{equation}
where $\lambda_1,\cdots, \lambda_n$ are the eigenvalues of Hessian matrix $D^2 u$ at $x \in \Omega$.
\end{Corollary}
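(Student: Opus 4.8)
The plan is to derive (\ref{e3.3})--(\ref{e3.5}) purely by combining Lemma \ref{l3.3} with the three structural comparisons between the operators $G$ and $F$ already recorded in Section 2: the trace comparison (\ref{e2.14}), namely $\sigma_1\mathcal{T}\leq\mathcal{T}_G\leq\sigma_2\mathcal{T}$ with $\mathcal{T}=\sum_i F_{ii}$ and $\mathcal{T}_G=\sum_i G_{ii}$; the first-order bound $|G_i|\leq\sigma_3 F(\kappa_1,\dots,\kappa_n)$; and the sandwich $\sigma_4\sum_i\frac{\partial F}{\partial\kappa_i}\kappa_i^2\leq\sum_i\frac{\partial G}{\partial\lambda_i}\lambda_i^2\leq\sigma_5\sum_i\frac{\partial F}{\partial\kappa_i}\kappa_i^2$. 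In all three, the constants $\sigma_1,\dots,\sigma_5$ depend only on an upper bound for $|Du|$, which is under control since $Du(\Omega)=\tilde{\Omega}\subset\subset B_1(0)$, hence only on $\Omega$ and $\tilde{\Omega}$.

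First I would record, following the proof of Lemma \ref{l3.3}, that along the flow $F=\sum_i\kappa_i=G(Du,D^2u)/v$ is pinched between two positive constants determined by $u_0$, $\Omega$, $\tilde{\Omega}$; this is where the $\dot{u}$-estimate of Lemma \ref{l3.1} enters, so that the bound is uniform in $t$ and does not involve the a priori unknown existence time $T$. Then from $|G_i|\leq\sigma_3 F$ and the upper bound on $F$ we immediately obtain $|G_i|\leq\Lambda_3$, which is (\ref{e3.3}).

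Next, to treat (\ref{e3.4}) one identifies $\sum_i\frac{\partial G}{\partial\lambda_i}$ with $\mathcal{T}_G=\sum_i G_{ii}$: exactly as for $[F_{ij}]$, the matrix $[G_{ij}]$ is diagonal whenever $D^2u$ is, and by (\ref{e2.11}) the trace $\mathcal{T}_G$ is invariant under orthogonal changes of frame. Since $F=\sum_i\kappa_i$ forces $\mathcal{T}=\sum_i\frac{\partial F}{\partial\kappa_i}=n$, the comparison (\ref{e2.14}) gives $\sigma_1 n\leq\sum_i\frac{\partial G}{\partial\lambda_i}\leq\sigma_2 n$. For (\ref{e3.5}) one feeds (\ref{e3.2}) --- which in the present case reads $\Lambda_1\leq\sum_i\kappa_i^2\leq\Lambda_2$ since $\frac{\partial F}{\partial\kappa_i}\equiv1$ --- into the sandwich inequality, obtaining $\sigma_4\Lambda_1\leq\sum_i\frac{\partial G}{\partial\lambda_i}\lambda_i^2\leq\sigma_5\Lambda_2$. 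Relabelling $\Lambda_4:=\min\{\sigma_1 n,\sigma_4\Lambda_1\}$ and $\Lambda_5:=\max\{\sigma_2 n,\sigma_5\Lambda_2\}$, and enlarging $\Lambda_3$ if necessary, yields the three stated inequalities with uniform constants.

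There is no genuine analytic obstacle here: the Corollary is a bookkeeping consequence of Lemma \ref{l3.3} and the estimates of Section 2. The only points deserving attention are (i) the identification of $\sum_i\partial G/\partial\lambda_i$ with the frame-independent trace $\mathcal{T}_G$, so that the quantity in (\ref{e3.4}) is well defined, and (ii) verifying that every constant traces back to the known data --- the $\sigma_j$'s to $\sup|Du|$, which is controlled because $\tilde{\Omega}\subset\subset B_1(0)$, and the pinching of $F$ to the time-uniform estimate of Lemma \ref{l3.1} --- so that $\Lambda_3,\Lambda_4,\Lambda_5$ do not depend on $T$.
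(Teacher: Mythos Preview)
Your proposal is correct and follows exactly the route the paper intends: the Corollary is stated in the paper immediately after Lemma \ref{l3.3} with the one-line justification that the structure conditions for $G$ follow from (\ref{e2.14})--(\ref{e2.8}), and your argument simply unpacks this by feeding the bounds of Lemma \ref{l3.3} into the three comparison inequalities of Section 2. Your attention to the identification $\sum_i\partial G/\partial\lambda_i=\mathcal{T}_G$ and to the $t$-independence of the constants is appropriate and matches the paper's implicit reasoning.
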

 Gathering the results obtained above we arrive at the following strucure conditions for the operator $\tilde{G}$.
\begin{Corollary}\label{c3.4}
 Assume that $\Omega$, $\tilde{\Omega}$ are bounded, uniformly convex domains with smooth boundary in $\mathbb{R}^{n}$ and  $\tilde{\Omega}\subset\subset B_{1}(0)$, $0<\alpha_{0}<1$, $u_{0}\in C^{2+\alpha_{0}}(\bar{\Omega})$
  which is   uniformly convex and satisfies $Du_{0}(\Omega)=\tilde{\Omega}$. If the strictly convex solution to (\ref{e2.17}) exists.
There   exists  uniform  positive constants  $\Lambda_3,\Lambda_4,\Lambda_5$,  depending  on the known data,  such that there holds
  \begin{equation}\label{e3.6}
 |\tilde{G}_{y_i}|\leq \Lambda_3,
 \end{equation}
\begin{equation}\label{e3.7}
\Lambda_4\leq \sum^{n}_{i=1}\frac{\partial \tilde{G}}{\partial \mu_{i}}\leq \Lambda_5
\end{equation}
\begin{equation}\label{e3.8}
\Lambda_4\leq \sum^{n}_{i=1}\frac{\partial \tilde{G}}{\partial \mu_{i}}\mu^{2}_{i}\leq \Lambda_5
\end{equation}
where $\mu_1,\cdots, \mu_n$ are the eigenvalues of Hessian matrix $D^2 \tilde{u}$ at $x \in \Omega$.
\end{Corollary}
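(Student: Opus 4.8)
The plan is to read off the three structure conditions for $\tilde{G}$ from those for $G$ in Corollary \ref{c3.3}, using the Legendre involution described around (\ref{e2.17}); the point is that inverting the Hessian interchanges the two ``trace'' quantities appearing in (\ref{e3.4}) and (\ref{e3.5}). Recall that $\tilde{G}(y,s)=-G(y,s^{-1})$, where $y$ is the gradient slot and $s$ the Hessian slot, and that along the flow $D^{2}\tilde{u}(y)=[D^{2}u(x)]^{-1}$ whenever $y=Du(x)$. In particular, since $u$ is strictly convex, the eigenvalues $\mu_{i}$ of $D^{2}\tilde{u}$ are the reciprocals $1/\lambda_{i}$ of the eigenvalues of $D^{2}u$, hence positive; so $\tilde{G}$ is evaluated on the positive cone and all the manipulations below make sense.

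The bound (\ref{e3.6}) is immediate, since $y$ enters $\tilde{G}$ only through the first slot: $\tilde{G}_{y_{i}}=-G_{i}$, whence $|\tilde{G}_{y_{i}}|=|G_{i}|\leq\Lambda_{3}$ by (\ref{e3.3}). For the other two estimates I would first record, writing $[G_{ij}]$ and $[\tilde{G}_{ij}]$ for the matrices of second-derivative coefficients $G_{ij}=\partial G/\partial r_{ij}$ and $\tilde{G}_{ij}=\partial\tilde{G}/\partial s_{ij}$, the matrix identity furnished by the chain rule together with $\partial(s^{-1})_{kl}/\partial s_{ij}=-(s^{-1})_{ki}(s^{-1})_{jl}$, namely
\begin{equation*}
[\tilde{G}_{ij}]=s^{-1}[G_{ij}]\,s^{-1}=(D^{2}u)\,[G_{ij}]\,(D^{2}u),
\end{equation*}
which is the exact analogue of (\ref{e2.11}). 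Taking the trace and using its cyclic invariance gives $\sum_{i}\partial\tilde{G}/\partial\mu_{i}=\mathrm{tr}\,[\tilde{G}_{ij}]=\mathrm{tr}\,([G_{ij}](D^{2}u)^{2})=\sum_{i}(\partial G/\partial\lambda_{i})\lambda_{i}^{2}$, which lies in $[\Lambda_{4},\Lambda_{5}]$ by (\ref{e3.5}); this is (\ref{e3.7}). Likewise, since $(D^{2}\tilde{u})^{2}=(D^{2}u)^{-2}$, one gets $\sum_{i}(\partial\tilde{G}/\partial\mu_{i})\mu_{i}^{2}=\mathrm{tr}\,([\tilde{G}_{ij}](D^{2}\tilde{u})^{2})=\mathrm{tr}\,((D^{2}u)[G_{ij}](D^{2}u)^{-1})=\mathrm{tr}\,[G_{ij}]=\sum_{i}\partial G/\partial\lambda_{i}$, which lies in $[\Lambda_{4},\Lambda_{5}]$ by (\ref{e3.4}); this is (\ref{e3.8}). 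Thus passing to the Legendre dual swaps the roles of (\ref{e3.4}) and (\ref{e3.5}), which is precisely why the same bounds reappear.

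I do not anticipate a genuine obstacle: the argument is just the bookkeeping of the Legendre transform, and because the displayed matrix identity is exact, no constants are lost, so $\Lambda_{3},\Lambda_{4},\Lambda_{5}$ may be taken to be those of Corollary \ref{c3.3}. The one point deserving a word of care is that (\ref{e2.17}) is again a flow of the same structure with oblique boundary condition $\tilde{h}(D\tilde{u})=0$, so that Lemma \ref{l3.2} provides for $\tilde{u}$ what Lemma \ref{l3.1} provided for $u$; but in view of the algebraic reduction above even this is not strictly necessary, since the three estimates for $\tilde{G}$ have been reduced verbatim to the three already established for $G$.
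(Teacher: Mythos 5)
Your proof is correct, and it makes explicit what the paper leaves implicit (the paper states Corollary~\ref{c3.4} with only the phrase ``gathering the results obtained above''). The algebra is right: from $\tilde{G}(y,s)=-G(y,s^{-1})$ one gets $\tilde{G}_{y_i}=-G_i$, and the matrix identity $[\tilde{G}_{ij}]=(D^2u)[G_{ij}](D^2u)$ together with $(D^2\tilde u)=(D^2u)^{-1}$ shows that the two trace quantities $\sum_i \tilde G_{ii}$ and $\sum_i \tilde G_{ii}\mu_i^2$ are literally equal to $\sum_i G_{ii}\lambda_i^2$ and $\sum_i G_{ii}$ respectively, so the roles of (\ref{e3.4}) and (\ref{e3.5}) are interchanged and (\ref{e3.6})--(\ref{e3.8}) follow with the same constants as in Corollary~\ref{c3.3}. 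This is the standard Legendre-duality bookkeeping (cf.~Urbas), and it is almost certainly the argument the paper has in mind; your write-up is cleaner in that it makes the exact swap visible rather than re-deriving the dual bounds from the $\dot{\tilde u}$-estimates of Lemma~3.2.
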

We will need the following elementary lemma in order to obtain the uniformly oblique estimates later.  This follows from Lemma 3.4 in \cite{RS}.
\begin{lemma}\label{l3.30}
Assume that  $[A_{ij}]$ is semi-positive real symmetric matrix and $[B_{ij}]$, $[C_{ij}]$ are two real symmetric matrixes. Then
$$2A_{ij}B_{jk}C_{ki}\leq A_{ij}B_{ik}B_{jk}+A_{ij}C_{ik}C_{jk}.$$
\end{lemma}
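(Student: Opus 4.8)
The plan is to reduce the inequality to the manifest nonnegativity of a sum of quadratic forms attached to the positive semidefinite matrix $[A_{ij}]$, by completing the square. Introduce the real symmetric matrix $[D_{ij}]:=[B_{ij}]-[C_{ij}]$ and set
$$Q:=A_{ij}D_{ik}D_{jk},$$
with summation over all repeated indices understood. First I would note that for each fixed $k$ the quantity $\sum_{i,j}A_{ij}D_{ik}D_{jk}$ is precisely the quadratic form of $[A_{ij}]$ evaluated on the $k$-th column $(D_{1k},\dots,D_{nk})$ of $[D_{ij}]$; since $[A_{ij}]$ is semi-positive definite each such term is nonnegative, and summing over $k$ gives $Q\ge 0$.

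Next I would expand $Q$ directly:
$$Q=A_{ij}B_{ik}B_{jk}-A_{ij}B_{ik}C_{jk}-A_{ij}C_{ik}B_{jk}+A_{ij}C_{ik}C_{jk}.$$
The two cross terms coincide: relabelling the dummy indices $i\leftrightarrow j$ in $A_{ij}B_{ik}C_{jk}$ and using $A_{ij}=A_{ji}$ gives $A_{ij}B_{ik}C_{jk}=A_{ij}C_{ik}B_{jk}$, and since $C_{ik}=C_{ki}$ this equals $A_{ij}B_{jk}C_{ki}$. Hence
$$Q=A_{ij}B_{ik}B_{jk}+A_{ij}C_{ik}C_{jk}-2A_{ij}B_{jk}C_{ki},$$
and combining this identity with $Q\ge 0$ yields the asserted bound. (Alternatively, one could diagonalize $[A_{ij}]=S^{\mathrm{T}}\mathrm{diag}(\lambda_1,\dots,\lambda_n)S$ with all $\lambda_i\ge 0$ and apply the scalar inequality $2\lambda_i x y\le\lambda_i x^2+\lambda_i y^2$ term by term, but the square-completion above is cleaner.)

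The argument is entirely elementary, so I do not expect a genuine obstacle; the only point requiring care is the index bookkeeping, specifically that the collapse of the two cross terms into a single expression $A_{ij}B_{jk}C_{ki}$ uses the symmetry of $[A_{ij}]$ together with that of $[C_{ij}]$ (equivalently of $[B_{ij}]$). This is consistent with the remark that the statement is essentially Lemma 3.4 of \cite{RS}; the bound is exactly the "AM--GM for the symmetrized pairing" that will be invoked when controlling cubic terms of the form $G_{ij}D_{ijk}u\cdot(\text{lower order})$ in the $C^2$ estimate to follow.
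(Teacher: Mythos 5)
Your proof is correct. Note that the paper itself provides no argument for this lemma; it simply defers to Lemma~3.4 of \cite{RS}. Your self-contained completing-the-square argument — introducing $D_{ij}:=B_{ij}-C_{ij}$, observing that $A_{ij}D_{ik}D_{jk}=\sum_k \bigl(A\,\text{applied to the }k\text{-th column of }D\bigr)\ge 0$ by positive semidefiniteness, expanding, and identifying the two cross terms via the symmetry of $A$ and of $C$ — is the natural and standard way to establish this, and every step checks out. The only point I would stress when splicing this into the argument is the one you already flagged: collapsing $A_{ij}B_{ik}C_{jk}$ and $A_{ij}C_{ik}B_{jk}$ into the single term $A_{ij}B_{jk}C_{ki}$ requires both the symmetry of $A$ (to swap $i\leftrightarrow j$) and the symmetry of $C$ (to write $C_{ik}=C_{ki}$); if one of $B,C$ were not symmetric the statement as written with $C_{ki}$ rather than $C_{ik}$ would not match the expansion.
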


According to the proof in \cite{JU}, we can verify the oblique boundary condition.
\begin{lemma}\label{l3.2}(See J. Urbas \cite{JU}.) Let $\nu=(\nu_{1},\nu_{2}, \cdots,\nu_{n})$ be the unit inward normal vector of $\partial\Omega$. If $u\in C^{2}(\bar{\Omega})$  with $D^{2}u\geq0$, then there holds $h_{p_{k}}(Du)\nu_{k}\geq0$.
\end{lemma}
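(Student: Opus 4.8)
The plan is to exploit the fact that, for the solutions under consideration, $Du$ maps $\bar\Omega$ onto $\overline{\tilde\Omega}$ and $\partial\Omega$ onto $\partial\tilde\Omega$, so that the Hessian $D^2u$ intertwines the inward normals of the two boundaries; the asserted sign then drops out of the positive definiteness of $D^2u$. First I would fix $x_0\in\partial\Omega$, set $y_0=Du(x_0)\in\partial\tilde\Omega$, write $A=D^2u(x_0)\ge 0$, and put $\beta=(\beta^1,\dots,\beta^n)$ with $\beta^k=h_{p_k}(Du(x_0))$; since $\tilde\Omega=\{h>0\}$ and $|Dh|_{\partial\tilde\Omega}=1$, the vector $\beta=Dh(y_0)$ is exactly the inward unit normal of $\partial\tilde\Omega$ at $y_0$. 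The crucial step is the identity $A\beta=\mu\nu$ for a scalar $\mu\in\mathbb{R}$. To prove it, take an arbitrary smooth curve $\gamma$ in $\partial\Omega$ with $\gamma(0)=x_0$; since the curve $s\mapsto Du(\gamma(s))$ lies in $\partial\tilde\Omega$, differentiating at $s=0$ shows that $A\gamma'(0)$ is tangent to $\partial\tilde\Omega$ at $y_0$, i.e. $\langle Aw,\beta\rangle=0$ for every $w$ in the tangent space $\nu^\perp$ of $\partial\Omega$ at $x_0$. Symmetry of $A$ then gives $\langle w,A\beta\rangle=0$ for all $w\in\nu^\perp$, so $A\beta\in\mathbb{R}\nu$.

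Next I would determine the sign of $\mu$. The function $\Psi(x):=h(Du(x))$ satisfies $\Psi>0$ in $\Omega$ and $\Psi=0$ on $\partial\Omega$ (because $Du(\Omega)=\tilde\Omega$), so $x_0$ is a minimum point of $\Psi$ on $\bar\Omega$, whence $0\le\partial_\nu\Psi(x_0)=h_{p_k}(Du(x_0))D_{k\ell}u(x_0)\nu_\ell=\langle A\beta,\nu\rangle=\mu$ (here $|\nu|=1$); equivalently this is the Hopf-type observation already used in the proof of Lemma \ref{l3.1}. Since $u$ is strictly convex, $A$ is positive definite, hence $\beta=\mu A^{-1}\nu$ and
\[
h_{p_k}(Du(x_0))\nu_k=\langle\beta,\nu\rangle=\mu\,\langle A^{-1}\nu,\nu\rangle\ge 0,
\]
because $\mu\ge 0$ and $A^{-1}$ is positive definite. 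As $x_0\in\partial\Omega$ is arbitrary, this is the claim; in the merely convex case one argues, when $\mu>0$, via $\langle\beta,\nu\rangle=\mu^{-1}\langle A\beta,\beta\rangle\ge 0$, or passes to a limit from strictly convex approximations.

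The step I expect to be the real obstacle is the intertwining identity $A\beta=\mu\nu$: it relies on the boundary‑to‑boundary property $Du(\partial\Omega)\subset\partial\tilde\Omega$ and on enough regularity of $u$ up to $\partial\Omega$ to differentiate $Du$ tangentially along $\partial\Omega$. Everything afterwards — pinning down the sign of $\mu$ and reading off the sign of $\langle\beta,\nu\rangle$ — is elementary linear algebra. This is precisely the argument of Urbas in \cite{JU}.
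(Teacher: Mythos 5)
Your proof is correct and is exactly the argument of Urbas in \cite{JU} that the paper cites without reproducing: tangential differentiation of the boundary relation $Du(\partial\Omega)\subset\partial\tilde\Omega$ gives the intertwining $D^2u(x_0)\,\beta=\mu\,\nu$; the sign $\mu\ge 0$ follows because $h(Du)\ge 0$ on $\bar\Omega$ attains its minimum value $0$ at every boundary point; and then $\langle\beta,\nu\rangle=\mu\,\langle (D^2u)^{-1}\nu,\nu\rangle\ge 0$ when $D^2u>0$. Two small points worth recording. First, the lemma's literal hypotheses ($u\in C^2$, $D^2u\ge 0$) do not mention the second boundary condition $h(Du)|_{\partial\Omega}=0$, $Du(\Omega)\subset\tilde\Omega$; you correctly read this in from the surrounding context, and both the tangency identity and the sign of $\mu$ genuinely depend on it. Second, your fallback for merely positive semidefinite $D^2u$, namely $\langle\beta,\nu\rangle=\mu^{-1}\langle A\beta,\beta\rangle$, needs $\mu>0$, so the degenerate case $\mu=0$ with $A$ singular is only covered by the approximation you allude to; this is harmless here because the lemma is invoked only for the strictly convex solutions of the flow, where $A$ is invertible.
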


In order to obtain the uniformly convex $C^{2}$ estimates,  we make use of the method to do the strict obliqueness estimates, a parabolic version of a result of J.Urbas \cite{JU} which was given in \cite{OK}. Returning to Lemma \ref{l3.2}, we get a uniform positive lower bound of the quantity $\inf_{\partial\Omega}h_{p_{k}}(Du)\nu_{k}$ which does not depend on $t$ under the structure conditions of $F$ in (\ref{e1.2.a})(\ref{e1.2.ab})(\ref{e1.2.1})(\ref{e3.1})(\ref{e3.2}).
\begin{rem}
Without loss of generality, in the following we set $C_{1},C_{2}, \cdots,$ to be constants depending only on the known data and  being independent of $t$.
\end{rem}
Here, we establish uniformly oblique estimates for smooth convex solutions of the mean curvature flow with the second boundary condition
. These will be important in the proof of both the long time existence and convergence result.

\begin{lemma}\label{llll3.4}
  Assume that $\Omega$, $\tilde{\Omega}$ are bounded, uniformly convex domains with smooth boundary in $\mathbb{R}^{n}$ and
   $\tilde{\Omega}\subset\subset B_{1}(0)$, $0<\alpha_{0}<1$, $u_{0}\in C^{2+\alpha_{0}}(\bar{\Omega})$ which is   uniformly convex and satisfies $Du_{0}(\Omega)=\tilde{\Omega}$. If $u$ is a strictly convex solution to (\ref{e1.12})-(\ref{e1.14}), then the strict obliqueness estimate
\begin{equation}\label{ee3.9}
\langle\beta, \nu\rangle\geq \frac{1}{C_1}>0
\end{equation}
holds on $\partial \Omega$ for some universal constant $C_1$, which depends only on  $u_0$, $\Omega,$ $\tilde{\Omega}$ and  is independent of $t$.
\end{lemma}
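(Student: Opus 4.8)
The plan is to follow the now-standard approach of Urbas and its parabolic adaptation by Schnürer–Smoczyk (\cite{JU}, \cite{OK}), transcribing it to the Minkowski setting via the structure conditions collected in Lemma \ref{l3.3} and Corollaries \ref{c3.3}, \ref{c3.4}. The key point is that $\langle\beta,\nu\rangle$ is positive everywhere on $\partial\Omega$ by Lemma \ref{l3.2}, so it suffices to rule out the degeneration $\langle\beta,\nu\rangle\to 0$; by compactness of $\bar\Omega$ (and the fact that all estimates will be $t$-independent), this is a matter of producing a differential inequality for an auxiliary function whose minimum over $\partial\Omega$ cannot be too small. I would introduce, following \cite{JU}, the function
$$
w(x,t)=\langle\beta(x,t),\nu(x)\rangle - K\,\tilde h(Du(x,t))\,\langle\tilde\nu(Du),\nu(x)\rangle^{?}
$$
— more precisely, the symmetric combination used by Urbas, namely to consider simultaneously $\langle\beta,\nu\rangle$ on $\partial\Omega$ and the dual quantity $\langle\tilde\beta,\tilde\nu\rangle$ on $\partial\tilde\Omega$, where $\tilde\beta^i=\tilde h_{y_i}(D\tilde u)$, and to exploit that under the Legendre transform these two are comparable. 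The cleanest route is: define $v=\langle\beta,\nu\rangle$, assume $v$ attains a small minimum at $(x_0,t_0)\in\partial\Omega\times(0,T]$, and derive a contradiction via the Hopf lemma applied to the linearized operator $\partial_t - G_{ij}\partial_{ij} - G_i\partial_i$.

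The steps, in order: (1) Extend $\nu$ to a smooth vector field near $\partial\Omega$ using the defining function $\tilde h$ of $\Omega$, so $\nu = D\tilde h$, and note $\beta^i = h_{p_i}(Du)$. Compute $\partial_t v$, $\partial_i v$, $\partial_{ij}v$ and plug into the linearized operator; the terms involving $D^3u$ must be handled by differentiating the boundary condition $h(Du)=0$ tangentially along $\partial\Omega$ (twice), which expresses the normal-normal third derivative of $u$ in terms of lower-order data and the convexity of $h$ (the $-\theta|\xi|^2$ bound in the Definition) and of $\tilde h$ (the $-\tilde\theta I$ bound). (2) Use Lemma \ref{l3.30} on the resulting cross terms $G_{ij}(\cdots)(\cdots)$ to absorb the bad third-order contributions into good quadratic terms, exactly as in \cite{RS}, \cite{OK}. (3) Use the structure estimates: $\Lambda_4\le\sum\partial G/\partial\lambda_i\le\Lambda_5$, $\Lambda_4\le\sum(\partial G/\partial\lambda_i)\lambda_i^2\le\Lambda_5$ from \eqref{e3.4}–\eqref{e3.5}, and $|G_i|\le\Lambda_3$ from \eqref{e3.3}, together with $\dot u$ bounded by Lemma \ref{l3.1}, to conclude that $(\partial_t - G_{ij}\partial_{ij}-G_i\partial_i)w \le C(1 + w^{-1})\cdot(\text{controlled})$, or rather that an appropriate barrier of the form $w = v + \mu\,\tilde h(Du)$ (with $\mu$ large, so that the strictly negative Hessian of $\tilde h$ beats the error terms) is a supersolution near $(x_0,t_0)$. (4) Apply the parabolic Hopf lemma to get $w_\nu(x_0,t_0) < 0$ at an interior-in-time minimum on $\partial\Omega$; on the other hand a direct computation of the normal derivative of $w$, using $\langle\beta,\nu\rangle$ small and the convexity of the domains, forces $w_\nu(x_0,t_0)\ge 0$ — the contradiction. (5) Finally, handle the $t=0$ slice separately: there $\langle\beta,\nu\rangle$ is bounded below by a constant depending only on $u_0$, $\Omega$, $\tilde\Omega$ since $u_0$ is a fixed uniformly convex function with $Du_0(\Omega)=\tilde\Omega$.

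The main obstacle is Step (2), the bookkeeping that shows the third-derivative terms produced by $G_{ij}\partial_{ij}v$ can be exactly cancelled against those coming from differentiating the oblique boundary relation twice: this is where the precise form of $a_{ij}$ and of $G_{ij}=b^{ik}F_{kl}b^{lj}$ in \eqref{e2.11}, \eqref{e2.71} enters, and where the uniform convexity of \emph{both} $h$ and $\tilde h$ is essential (one controls the tangential Hessian of the boundary constraint, the other the geometry of $\partial\Omega$). In the Euclidean Lagrangian case this is carried out in \cite{OK}; the only genuinely new feature here is the presence of the factor $v=\sqrt{1-|Du|^2}$ and the gradient term $G_i$, both of which are, however, uniformly controlled because $\tilde\Omega\subset\subset B_1(0)$ forces $|Du|$ and $1/v$ to be bounded. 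Thus once the structure conditions \eqref{e3.1}–\eqref{e3.8} are in place, the Minkowski case reduces to the already-known argument with only notational changes, and I expect the proof to go through with constants depending only on $u_0,\Omega,\tilde\Omega$ and independent of $t$, as claimed.
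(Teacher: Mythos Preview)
Your outline follows the same Urbas/Schn\"urer--Smoczyk framework the paper uses, but Step~(4) as you describe it is not the actual mechanism, and the omission is a genuine gap. The argument is \emph{not} a sign contradiction via Hopf (``Hopf gives $w_\nu<0$, direct computation gives $w_\nu\ge 0$''). Rather, at the boundary minimum $(x_0,t_0)$ of $\omega=\langle\beta,\nu\rangle+\tau h(Du)$ (note: $h$, not $\tilde h$---you need the term that vanishes on $\partial\Omega$), one constructs a barrier $\Phi=\omega-\omega(x_0,t_0)+\sigma\ln(1+k\tilde h(x))+A|x-x_0|^2$ on a half-ball $\Omega_r$, shows $\mathcal L\Phi\le 0$ and $\Phi\ge 0$ on the parabolic boundary, and concludes only the \emph{quantitative} bound $\omega_n(x_0,t_0)\ge -C_3$. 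Combining this with the vanishing tangential derivatives of $\omega$ yields a lower bound on $h_{p_k}h_{p_l}u_{kl}$ at $(x_0,t_0)$, \emph{or} directly $h_{p_n}\ge c>0$ via a case split. The same barrier run on the Legendre side gives a lower bound on $\tilde h_{p_k}\tilde h_{p_l}\tilde u_{kl}=u^{ij}\nu_i\nu_j$.

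The step you are missing entirely is how these two one-sided bounds are glued together: Urbas' identity
\[
\langle\beta,\nu\rangle^2 \;=\; \bigl(h_{p_k}h_{p_l}u_{kl}\bigr)\bigl(u^{ij}\nu_i\nu_j\bigr)
\]
at the minimum point. Neither the $\Omega$-side barrier nor the $\tilde\Omega$-side barrier alone bounds $\langle\beta,\nu\rangle$ from below; it is the product of the two controlled quantities that does. Without this formula your Step~(4) has no way to close. Also, your worry about third derivatives in Step~(2) is misplaced: in $\mathcal L\omega=G_{ij}\omega_{ij}+G_i\omega_i-\omega_t$ the $D^3u$ terms from $G_{ij}\omega_{ij}$ cancel exactly against those from $\omega_t$ (differentiate the PDE once), so only the quadratic terms $G_{ij}u_{il}u_{jm}(h_{p_kp_lp_m}\nu_k+\tau h_{p_lp_m})$ survive---these are made negative by choosing $\tau$ large, using $D^2h\le -\theta I$.
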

\begin{proof}
The proof of this is similar in the parts to the proof of \cite{JU} , but it is different from the elliptic operators and the barrier functions which are given in this paper. It follows from the maximum principle according to the struction conditions of the operator $G$, and is proved in the same way as \cite{RS}.
Define
$$\omega=\langle \beta,\nu\rangle+\tau h(Du),$$
where $\tau$ is a positive constant to be determined. Let $(x_{0},t_{0})\in \partial\Omega\times(0,T]$  such that
$$\langle \beta,\nu\rangle(x_0,t_{0})=h_{p_k}(Du(x_0,t_{0}))\nu_k(x_0)=\min_{\partial\Omega\times[0,T]}\langle \beta,\nu\rangle.$$
By rotation, we may assume that $\nu(x_0)=(0,\cdots,0,1)$. Applying the above assumptions and the boundary condition, we find that
$$\omega(x_0,t_{0})=\min_{\partial\Omega\times[0,T]} \omega=h_{p_{n}}(Du(x_{0},t_{0})).$$
By the smoothness of $\Omega$ and its convexity, we extend $\nu$ smoothly to a tubular neighborhood of $\partial\Omega$ such that in the matrix sense
\begin{equation}\label{e3.4}
  \left(\nu_{kl}\right):=\left(D_k\nu_l\right)\leq -\frac{1}{C_{2}}\operatorname{diag} (\underbrace {1,\cdots, 1}_{n-1},0),
\end{equation}
where $C_{2}$ is a positive constant. By Lemma \ref{l3.2}, we see that $h_{p_{n}}(Du(x_{0},t_{0}))\geq0$.

At the minimum point  $(x_0,t_{0})$  it yields
\begin{equation}\label{e3.11a}
 0=\omega_r=h_{p_np_k}u_{kr}+h_{p_k}\nu_{kr}+\tau h_{p_k}u_{kr}, \quad 1\leq r\leq n-1.
\end{equation}
We assume that the following key result
\begin{equation}\label{e3.6}
 \omega_n(x_0,t_{0})>-C_{3},
\end{equation}
holds which will be proved later, where $C_{3}$ is a positive constant depending only on $\Omega$ and $\tilde{\Omega}$. We observe that (\ref{e3.6}) can be rewritten as
\begin{equation}\label{e3.7}
h_{p_np_k}u_{kn}+h_{p_k}\nu_{kn}+ \tau h_{p_k}u_{kn}>-C_{3}.
\end{equation}
Multiplying $h_{p_n}$ on both sides of (\ref{e3.7}) and $h_{p_r}$ on both sides of (\ref{e3.11a}) respectively, and summing up together, one gets
\begin{equation*}
 \tau h_{p_k}h_{p_l}u_{kl}\geq -C_{3}h_{p_n}- h_{p_k}h_{p_l}\nu_{kl}- h_{p_k}h_{p_np_l}u_{kl}.
\end{equation*}
Combining (\ref{e3.4}) with
$$ 1\leq r\leq n-1,\quad h_{p_k}u_{kr}=\frac{\partial h(Du)}{\partial x_r}=0,\quad h_{p_k}u_{kn}=\frac{\partial h(Du)}{\partial x_n}\geq 0,\quad -h_{p_np_n}\geq 0,$$
we have
$$\tau h_{p_k}h_{p_l}u_{kl}\geq-C_{3}h_{p_n}+\frac{1}{C_{2}}|Dh|^2-\frac{1}{C_{2}}h^2_{p_n}
\geq-C_{4}h_{p_n}+\frac{1}{C_{4}}-\frac{1}{C_{4}}h^2_{p_n},$$
where $C_{4}=\max\{C_{2},C_{3}\}$.
Now, to obtain the estimate $\langle \beta,\nu\rangle$ we divide $-C_{4}h_{p_n}+\frac{1}{C_{4}}-\frac{1}{C_{4}}h^2_{p_n}$ into two cases at $(x_0,t_{0})$.

Case (i).  If
$$-C_{4}h_{p_n}+\frac{1}{C_{4}}-\frac{1}{C_{4}}h^2_{p_n}\leq \frac{1}{2C_{4}},$$
then
$$h_{p_k}(Du)\nu_{k}=h_{p_n}\geq \sqrt{\frac{1}{2}+\frac{C^{4}_{4}}{4}}-\frac{C^{2}_{4}}{2}.$$
It means that there is a uniform positive lower bound for $\underset{\partial\Omega}\min \langle \beta,\nu\rangle$.

Case (ii). If
$$-C_{4}h_{p_n}+\frac{1}{C_{4}}-\frac{1}{C_{4}}h^2_{p_n}> \frac{1}{2C_{4}},$$
then we know that there is a positive lower bound for $h_{p_k}h_{p_l}u_{kl}$.

Let $\tilde{u}$ be the Legendre transformation of $u$, then $\tilde{u}$ satisfies (\ref{e2.17}).
Here we emphasize that the structure conditions of $\tilde{G}$ as same as $G$ by Corollary \ref{c3.4}.
The unit inward normal vector of $\partial\Omega$ can be represented as $\nu=D\tilde{h}$. By the same token,
$\tilde{\nu}=Dh$, where $\tilde{\nu}=(\tilde{\nu}_{1}, \tilde{\nu}_{2},\cdots,\tilde{\nu}_{n})$ is the unit inward normal vector of $\partial\tilde{\Omega}$.

Let $\tilde{\beta}=(\tilde{\beta}^{1}, \cdots, \tilde{\beta}^{n})$ with $\tilde{\beta}^{k}:=\tilde{h}_{p_{k}}(D\tilde{u})$.
Using the representation as the works of \cite{HRY} and \cite{OK},
we also define
$$\tilde{\omega}=\langle\tilde{\beta}, \tilde{\nu}\rangle+\tilde{\tau} \tilde{h}(D\tilde{u}),$$
in which
$$\langle\tilde{\beta}, \tilde{\nu}\rangle=\langle\beta, \nu\rangle,$$
and $\tilde{\tau}$ is a  positive constant to be determined.
Denote $y_{0}=Du(x_{0},t_{0})$. Then $$\tilde{\omega}(y_{0},t_{0})=\omega(x_{0},t_{0})=\min_{\partial\tilde{\Omega}} \tilde{\omega}.$$ Using the same methods, under the assumption of
\begin{equation}\label{e3.9}
\tilde{\omega}_{n}(y_{0},t_{0})\geq -C_{5},
\end{equation}
we obtain the positive lower bounds of $\tilde{h}_{p_{k}}\tilde{h}_{p_{l}}\tilde{u}_{kl}$ or
$$h_{p_{k}}(Du)\nu_{k}=\tilde{h}_{p_{k}}(D\tilde{u})\tilde{\nu}_{k}=\tilde{h}_{p_{n}}\geq
\sqrt{\frac{1}{2}+\frac{C^{4}_{6}}{4}}-\frac{C^{2}_{6}}{2}.$$
On the other hand, one can easily check that
$$\tilde{h}_{p_{k}}\tilde{h}_{p_{l}}\tilde{u}_{kl}=\nu_{i}\nu_{j}u^{ij}.$$
Then by the positive lower bounds of $h_{p_{k}}h_{p_{l}}u_{kl}$ and $\tilde{h}_{p_{k}}\tilde{h}_{p_{l}}\tilde{u}_{kl}$, the desired conclusion can be obtained by
\begin{equation*}
\langle \beta,\nu\rangle=\sqrt{h_{p_k}h_{p_l}u_{kl}u^{ij}\nu_i\nu_j}.
\end{equation*}
For details of the proof of the above formula, see \cite{JU}. It remains to prove the key estimates (\ref{e3.6}) and (\ref{e3.9}).

At first we give the proof of (\ref{e3.6}). By (\ref{e2.16}), Corollary \ref{c3.3} and Lemma \ref{l3.30}, we have
\begin{equation*}\label{e3.10}
 \begin{aligned}
\mathcal{L}\omega=&G_{ij}u_{il}u_{jm}(h_{p_{k}p_{l}p_{m}}\nu_{k}+\tau h_{p_{l}p_{m}})\\
&+2G_{ij}h_{p_{k}p_{l}}u_{li}\nu_{kj}+G_{ij}h_{p_{k}}\nu_{kij}+G_{i}h_{p_{k}}\nu_{ki}\\
\leq& (h_{p_{k}p_{l}p_{m}}\nu_{k}+\tau h_{p_{l}p_{m}}+\delta_{lm})G_{ij}u_{il}u_{jm}+C_{7}\mathcal{T}_{G}+C_{8},
 \end{aligned}
\end{equation*}
where $\mathcal{L}=G_{ij}\partial_{ij}+G_{i}\partial_{i}-\partial_{t}$ and
$$2G_{ij}h_{p_{k}p_{l}}u_{li}\nu_{kj}\leq  G_{ij}u_{im}u_{mj}+C_{7}\mathcal{T}_{G}.$$
Since $D^{2}h\leq-\theta I$, we may choose $\tau$ large enough depending on the known data such that
$$(h_{p_{k}p_{l}p_{m}}\nu_{k}+\tau h_{p_{l}p_{m}}+\delta_{lm})<0.$$
Consequently, we deduce that
\begin{equation}\label{e3.11}
\mathcal{L}\omega\leq C_{9}\mathcal{T}_{G} \    \ in \   \ \Omega.
\end{equation}
by the convexity of $u$.

Denote a neighborhood of $x_{0}$ in $\Omega$ by
$$\Omega_{r}:=\Omega\cap B_{r}(x_{0}),$$
where $r$ is a positive constant such that $\nu$ is well defined in $\Omega_{r}$. In order to obtain the desired results, it suffices to consider the auxiliary function
$$\Phi(x,t)=\omega(x,t)-\omega(x_{0},t_{0})+\sigma\ln(1+k\tilde{h}(x))+A|x-x_{0}|^{2},$$
where $\sigma$, $k$ and $A$ are positive constants to be determined.
By  $\tilde{h}$ being the defining function of $\Omega$ and $G_{i}$ being bounded one can show that
\begin{equation}\label{e3.12}
 \begin{aligned}
  \mathcal{L}(\ln(1+k\tilde{h}))&=G_{ij}\left(\frac{k\tilde{h}_{ij}}{1+k\tilde{h}}
  -\frac{k\tilde{h}_{i}}{1+k\tilde{h}}\frac{k\tilde{h}_{j}}{1+k\tilde{h}}\right)
  +G_{i}\frac{k\tilde{h}_{i}}{1+k\tilde{h}}\\
  &\triangleq G_{ij}\frac{k\tilde{h}_{ij}}{1+k\tilde{h}}-G_{ij}\eta_{i}\eta_{j}+G_{i}\eta_{i}\\
  &\leq\left(-\frac{k\tilde{\theta}}{1+k\tilde{h}}+C_{10}-C_{11}|\eta-C_{12}I|^{2}\right)\mathcal{T}_{G}\\
  &\leq\left(-\frac{k\tilde{\theta}}{1+k\tilde{h}}+C_{10}\right)\mathcal{T}_{G},
  \end{aligned}
\end{equation}
where $\eta=\left(\frac{k\tilde{h}_{1}}{1+k\tilde{h}}, \frac{k\tilde{h}_{2}}{1+k\tilde{h}},\cdots, \frac{k\bar{h}_{n}}{1+k\tilde{h}}\right)$.
By taking $r$ to be small enough such that we have
\begin{equation}\label{e3.13}
 \begin{aligned}
0\leq \tilde{h}(x)&=\tilde{h}(x)-\tilde{h}(x_{0})\\
&\leq \sup_{\Omega_{r}}|D\tilde{h}||x-x_{0}|\\
&\leq r\sup_{\Omega}|D\tilde{h}|\leq \frac{\tilde{\theta}}{3C_{10}}.
 \end{aligned}
\end{equation}
By choosing $k=\frac{7C_{10}}{\tilde{\theta}}$ and applying (\ref{e3.13}) to (\ref{e3.12}) we obtain
\begin{equation}\label{e3.14}
  \mathcal{L}(\ln(1+k\tilde{h}))
  \leq -C_{10}\mathcal{T}_{G}.
\end{equation}
Combining (\ref{e3.11}) with (\ref{e3.14}), a direct computation yields
\begin{equation*}
\mathcal{L}(\Phi(x,t))\leq (C_{9}-\sigma C_{10}+2A+2AC_{13})\mathcal{T}_{G}.
\end{equation*}
On $\partial\Omega\times[0,T]$, it is clear that $\Phi(x,t)\geq0$.
Because $\omega$ is bounded, then it follows that we can choose $A$ large enough depending on the known data such that on $(\Omega\cap\partial B_{r}(x_{0}))\times[0,T]$,
\begin{equation*}
 \begin{aligned}
  \Phi(x,t)&=\omega(x,t)-\omega(x_{0},t_{0})+\sigma\ln(1+kh^{\ast})+Ar^{2}\\
  &\geq\omega(x,t)-\omega(x_{0},t_{0})+Ar^{2}\geq 0.
  \end{aligned}
\end{equation*}
The above argument implies that
\begin{equation}\label{e3.15aaa}
  \Phi(x,t)\geq 0, \,\,(x,t)\in(\partial\Omega_{r}\times[0,T])
\end{equation}
Furthermore, at $t=0$,
 \begin{equation}\label{e3.12aa}
 \begin{aligned}
  \triangle(\ln(1+k\tilde{h}))&=\sum^{n}_{i=1}\left(\frac{k\tilde{h}_{ii}}{1+k\tilde{h}}
  -\frac{k\tilde{h}_{i}}{1+k\tilde{h}}\frac{k\tilde{h}_{i}}{1+k\tilde{h}}\right)\\
  &\leq-\frac{kn\tilde{\theta}}{1+k\tilde{h}}\\
  &\leq-\frac{1}{\Lambda_{5}}\frac{kn\tilde{\theta}}{1+k\tilde{h}}\mathcal{T}_{G},
  \end{aligned}
\end{equation}
consequently,
\begin{equation*}
\triangle(\Phi(x,0))\leq \frac{1}{\Lambda_{5}}(\max_{x\in\bar{\Omega}}|\triangle\omega(x,0)|-\frac{21n}{10}\sigma C_{10}+2A)\mathcal{T}_{G}.
\end{equation*}
Let
$$\sigma=\frac{C_{9}+2A+2AC_{13}+\max_{x\in\bar{\Omega}}|\triangle\omega(x,0)|}{C_{10}},$$
then
\begin{equation}\label{e3.15aa}
\left\{ \begin{aligned}
   \triangle(\Phi(x,0))&\leq 0,\ \  &&x\in\Omega_{r},\\
   \Phi(x,0))&\geq 0,\ \  &&x\in\partial\Omega_{r},
                          \end{aligned} \right.
\end{equation}
\begin{equation}\label{e3.15}
   \mathcal{L}\Phi\leq 0,\,\,  x\in\Omega_{r}\times[0,T].
\end{equation}
According to the maximum principle in (\ref{e3.15aa}), we get that
\begin{equation}\label{e3.15aaaa}
\Phi(x,0)|_{\Omega_{r}}\geq 0.
\end{equation}
By making use of the maximum principle in (\ref{e3.15}),
it follows from (\ref{e3.15aaa}) and (\ref{e3.15aaaa}) that
$$\Phi|_{\Omega_{r}\times[0,T]}\geq \min_{\partial(\Omega_{r}\times[0,T])}\Phi\geq 0.$$
By the above inequality and $\Phi(x_0,t_0)=0$, we have $\partial_n\Phi(x_0,t_0)\geq 0$, which gives the desired estimate (\ref{e3.6}).

Finally, we are turning to the proof of (\ref{e3.9}). The proof of (\ref{e3.9}) is similar to that of (\ref{e3.6}). Define
$$\mathcal{\tilde{L}}=\tilde{G}_{ij}\partial_{ij}-\partial_{t}.$$
From Corollary \ref{c3.4}, we arrive at
\begin{equation*}
 \begin{aligned}
\mathcal{\tilde{L}}\tilde{\omega}=&\tilde{G}_{ij}\tilde{u}_{li}\tilde{u}_{mj}(\tilde{h}_{q_{k}q_{l}q_{m}}\tilde{\nu}_{k}+\tilde{\tau} \tilde{h}_{q_{l}q_{m}})+2\tilde{G}_{ij}\tilde{h}_{q_{k}q_{l}}\tilde{u}_{li}\tilde{\nu}_{kj}\\
&-\tilde{G}_{y_{k}}(\tilde{h}_{q_{k}q_{m}}\tilde{\nu}_{m}+\tilde{\tau} \tilde{h}_{q_{k}})+\tilde{G}_{ij}\tilde{h}_{q_{k}}\tilde{\nu}_{kij}\\
\leq&(\tilde{h}_{q_{k}q_{l}q_{m}}\tilde{\nu}_{k}+\tilde{\tau} \tilde{h}_{q_{l}q_{m}}+\delta_{lm})\tilde{G}_{ij}\tilde{u}_{il}\tilde{u}_{jm}+C_{14}\mathcal{T}_{\tilde{G}}+C_{15}(1+\tilde{\tau}),
 \end{aligned}
\end{equation*}
where
$$2\tilde{G}_{ij}\tilde{h}_{q_{k}q_{l}}\tilde{u}_{li}\tilde{\nu}_{kj}\leq \delta_{lm}\tilde{G}_{ij}\tilde{u}_{il}\tilde{u}_{jm}
+C_{14}\mathcal{T}_{\tilde{G}},$$
by Lemma \ref{l3.30}. Since $D^{2}\tilde{h}\leq-\tilde{\theta}I$, we only need to choose $\tilde{\tau}$ sufficiently large depending on the known data such that
$$\tilde{h}_{q_{k}q_{l}q_{m}}\tilde{\nu}_{k}+\tilde{\tau} \tilde{h}_{q_{l}q_{m}}+\delta_{lm}<0.$$
Therefore,
\begin{equation}\label{e3.16}
\mathcal{\tilde{L}}\tilde{\omega}\leq C_{16}\mathcal{T}_{\tilde{G}}
\end{equation}
by the convexity of $\tilde{u}$.

Denote a neighborhood of $y_{0}$ in $\tilde{\Omega}$ by
$$\tilde{\Omega}_{\rho}:=\tilde{\Omega}\cap B_{\rho}(y_{0}),$$
where $\rho$ is a positive constant such that $\tilde{\nu}$ is well defined in $\tilde{\Omega}_{\rho}$. In order to obtain the desired results,
we  consider the  auxiliary function
$$\Psi(y,t)=\tilde{\omega}(y,t)-\tilde{\omega}(y_{0},t_{0})+\tilde{k}h(y)+\tilde{A}|y-y_{0}|^{2},$$
where $\tilde{k}$ and $\tilde{A}$ are positive constants to be determined. It is easy to check that $\Psi(y)\geq0$ on $\partial\tilde{\Omega}\times[0,T]$. Now that $\tilde{\omega}$ is bounded, it follows that we can choose $\tilde{A}$ large enough depending on the known data such that on $(\tilde{\Omega}\cap\partial B_{\rho}(y_{0}))\times[0,T]$,
\begin{equation*}
  \Psi(y,t)=\tilde{\omega}(y.t)-\tilde{\omega}(y_{0},t_{0})+\tilde{k}h(y)+\tilde{A}\rho^{2}\geq \tilde{\omega}(y,t)-\tilde{\omega}(y_{0},t_{0})
  +\tilde{A}\rho^{2}\geq 0.
\end{equation*}
It follows from (\ref{e3.16}) and $D^{2}h\leq-\theta I$ that
$$\mathcal{\tilde{L}}\Psi\leq(C_{16}-\tilde{k}\theta+2\tilde{A})\mathcal{T}_{\tilde{G}},$$
$$\triangle\Psi(y,0)\leq\frac{1}{\Lambda_{5}}(\max_{y\in\bar{\tilde{\Omega}}_{\rho}}|\triangle\tilde{\omega}(y.0)|-n\tilde{k}\theta+2n\tilde{A})\mathcal{T}_{\tilde{G}},$$
Let $$\tilde{k}=\frac{2n\tilde{A}+C_{16}+\max_{y\in\bar{\tilde{\Omega}}_{\rho}}|\triangle\tilde{\omega}(y.0)|}{\theta}$$
Consequently,
\begin{equation*}
\left\{ \begin{aligned}
   \mathcal{\tilde{L}}\Psi&\leq 0,\ \  &&(y,t)\in\tilde{\Omega}_{\rho}\times[0,T],\\
   \Psi&\geq 0,\ \  &&y\in\partial(\tilde{\Omega}_{\rho}\times[0,T]).
                          \end{aligned} \right.
\end{equation*}
The rest of the proof of (\ref{e3.9}) is the same as (\ref{e3.6}). Thus the proof of (\ref{ee3.9}) is completed.
\end{proof}

We now proceed to carry out the global $C^2$ estimate. Note that the results of Lemma \ref{l3.1} involve an application to the upper $C^{2}$ estimates of the mean curvature flow with second boundary condition.

\begin{lemma}\label{lem3.10}
 Assume that $\Omega$, $\tilde{\Omega}$ are bounded, uniformly convex domains with smooth boundary in $\mathbb{R}^{n}$ and $\tilde{\Omega}\subset\subset B_{1}(0)$, $0<\alpha_{0}<1$, $u_{0}\in C^{2+\alpha_{0}}(\bar{\Omega})$ which is   uniformly convex and satisfies $Du_{0}(\Omega)=\tilde{\Omega}$. If $u$ is a strictly convex solution to (\ref{e1.12})-(\ref{e1.14}), then there exists a positive constant $C_{17}$ depending only on $u_0$, $\Omega$, $\tilde{\Omega}$, such that
\begin{equation}\label{eq3.15}
D^2 u(x,t) \leq C_{17} I_n,\ \ (x,t)\in\bar\Omega_T,
\end{equation}
where $I_n$ is the $n\times n$ identity matrix.
\end{lemma}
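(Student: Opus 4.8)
The plan is to estimate the largest principal curvature of $M=\mathrm{graph}\,u(\cdot,t)$ and convert it back into an estimate on $D^2 u$ via the relation $D_{ij}u = v\, b_{ik}a_{kl}b_{lj}$, using that $v=\sqrt{1-|Du|^2}$ and $b_{ij}$ are already controlled (from $\tilde\Omega\subset\subset B_1(0)$ and $Du(\Omega)=\tilde\Omega$). So it suffices to bound the maximal eigenvalue $\kappa_{\max}$ of $[a_{ij}]$, or equivalently the largest eigenvalue $\lambda_{\max}$ of $D^2u$, since the two are comparable by the boundedness of $[b^{ij}]$. First I would introduce an auxiliary function of the form $W(x,t,\xi)=\log u_{\xi\xi} + \text{(lower-order term in }Du\text{)}$, maximized over $\bar\Omega_T$ and over unit vectors $\xi$; a standard choice is to add a term like $\tfrac{A}{2}|Du|^2$ or a multiple of $\dot u$ to absorb the gradient derivatives, exploiting that $\dot u = G(Du,D^2u)$ is already bounded above and below by Lemma \ref{l3.1} and that $|Du|<1$.

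Next I would differentiate the flow equation $u_t = G(Du,D^2u)$ twice in the direction $\xi$, obtaining the linearized operator $\mathcal L = G_{ij}\partial_{ij}+G_i\partial_i-\partial_t$ acting on $u_{\xi\xi}$; the crucial structural input is the concavity inequality $F_{ij,kl}[\mathcal A]\eta_{ij}\eta_{kl}\le 0$, which makes the third-derivative terms have a favorable sign, together with the trace bounds $\Lambda_4\le \mathcal T_G\le \Lambda_5$ and $|G_i|\le \Lambda_3$ from Corollary \ref{c3.3} and the bound $\Lambda_4\le \sum \tfrac{\partial G}{\partial\lambda_i}\lambda_i^2\le\Lambda_5$ from \eqref{e3.5}. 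These give, at an interior maximum of $W$, a differential inequality forcing $\lambda_{\max}$ to be bounded by a constant depending only on the known data. The interior case is essentially the standard second-order estimate for concave parabolic equations; the structure conditions collected in Section 3 are precisely what is needed to run it.

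The main obstacle will be the boundary case: if the maximum of $W$ occurs on $\partial\Omega\times[0,T]$. Here I would exploit the strict obliqueness estimate $\langle\beta,\nu\rangle\ge 1/C_1$ from Lemma \ref{llll3.4} together with the Legendre-transform duality: the boundary condition $h(Du)=0$ can be differentiated tangentially, and second tangential derivatives of $u$ on $\partial\Omega$ are controlled by the geometry of $\partial\tilde\Omega$ (via the defining function $h$ with $D^2h\le -\theta I$) plus first derivatives, in the manner of Urbas \cite{JU} and its parabolic version in \cite{OK}. The double-normal second derivative $u_{\nu\nu}$ is the remaining quantity; it is controlled because $\nu_i\nu_j u^{ij} = \tilde h_{p_k}\tilde h_{p_l}\tilde u_{kl}$ is bounded below (shown in the proof of Lemma \ref{llll3.4}), and the same argument applied on the dual side to $\tilde u$ — whose operator $\tilde G$ has the same structure by Corollary \ref{c3.4} — bounds $D^2\tilde u$ from above, hence $D^2 u$ from below away from zero, which together with the trace-type bound $\mathrm{tr}(D^2u)\le C$ (coming from $\sum\kappa_i$ bounded via Lemma \ref{l3.1}) pins $\lambda_{\max}$ from above. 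Combining the interior and boundary cases yields \eqref{eq3.15}.
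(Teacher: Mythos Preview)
Your proposal outlines the standard machinery for second--derivative estimates for concave fully nonlinear flows, and for a generic $F$ this is indeed the route one would take. But for this particular lemma you have overlooked the specific structure of the equation: here $G(Du,D^2u)=\sqrt{1-|Du|^2}\,H$ with $H=\sum_i\kappa_i$, and $u$ is \emph{strictly convex}. The paper's proof is essentially one line. By Lemma~\ref{l3.1} the quantity $\dot u=G$ is bounded, and since $v=\sqrt{1-|Du|^2}$ is bounded away from $0$ (because $\tilde\Omega\subset\subset B_1(0)$), the mean curvature $H=\sum\kappa_i$ is bounded above by a constant depending only on the data. Convexity of $u$ gives $\kappa_i\ge 0$ for each $i$, so $\kappa_{\max}\le\sum_i\kappa_i\le C$. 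Then the relation $D_{ij}u=v\,b_{ik}a_{kl}b_{lj}$, together with the bounds on $v$ and $b_{ij}$, yields $D^2u\le C_{17}I_n$ directly. No auxiliary function, no boundary analysis, no Legendre duality is needed.

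You actually mention this trace bound yourself in the last sentence (``the trace-type bound $\mathrm{tr}(D^2u)\le C$ coming from $\sum\kappa_i$ bounded via Lemma~\ref{l3.1}''), but only as one ingredient in a boundary argument; in fact it is the entire proof. The elaborate interior maximum--principle computation and the boundary splitting into tangential and normal second derivatives are not wrong as a general strategy, but here they are superfluous: positivity of all eigenvalues plus a trace bound already controls the largest eigenvalue. The paper reserves the more delicate Legendre--transform and barrier arguments for the \emph{lower} bound on $D^2u$ (Lemmas~\ref{lem3.13a}--\ref{lem3.5a}), where no such shortcut is available.
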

\begin{proof}
By the proof of Lemma \ref{l3.3}, one can show that $$\min_{p\in \tilde{\Omega}}\frac{1}{\sqrt{1-|p|^{2}}}\min_{\bar{\Omega}}G(Du_{0},D^{2}u_{0})\leq  \sum_{i=1}^n\kappa_{i}\leq \max_{p\in \tilde{\Omega}}\frac{1}{\sqrt{1-|p|^{2}}}\max_{\bar{\Omega}}G(Du_{0},D^{2}u_{0}).$$
By the relationship in (\ref{e2.71}), we get that
$$\sum_{i=1}^n\kappa_{i}=\sum_{i=1}^n\frac{1}{v}b^{ik}D_{kl}ub^{li}.$$
Then, using \begin{equation*}
b^{ij}=\delta_{ij}-\frac{D_{i}uD_{j}u}{v(1+v)}.
\end{equation*}
and the second boundary condition,   we obtain the desired bound.
\end{proof}
In the following,  we describe the positive lower bound of $D^{2}u$.  Starting from the evolution equation (\ref{e1.11a}), we will exhibit the evolution equations for the metric $g$, the second fundamental form $A=h_{ij}$, the mean curvature $H$, seeing the reference in \cite{KG}.
\begin{equation}\label{e3.23}
\frac{\partial g_{ij}}{\partial t}=2Hh_{ij},
\end{equation}
\begin{equation}\label{e3.25}
\frac{\partial h_{ij}}{\partial t}=\nabla_{i}\nabla_{j}H+Hh_{il}g^{lk}h_{kj},
\end{equation}
\begin{equation}\label{e3.27}
\triangle_{M} h_{ij}=\nabla_{i}\nabla_{j}H+h_{ij}|A|^{2}-Hh_{il}g^{lk}h_{kj},
\end{equation}
\begin{equation}\label{e3.29}
(\frac{\partial}{\partial t}-\triangle_{M})h_{ij}=2Hh_{il}g^{lk}h_{kj}-h_{ij}|A|^{2},
\end{equation}
\begin{equation}\label{e3.30}
(\frac{\partial}{\partial t}-\triangle_{M} )H=-|A|^{2}H,
\end{equation}
where $\nabla_{i}, \nabla_{j},\cdots,$ denote the covariant derivative on the manifold $M\triangleq \{(x,u(x,t)|x\in\Omega\}\subset \mathbb{R}^{n,1}$ , $\triangle_{M}$ is the Laplace-Beltrami operator on $M$, $[h^{ij}]=[h_{ij}]^{-1}$ and $|A|^{2}=h_{il}g^{lk}h_{kj}g^{ij}.$

\begin{lemma}\label{lem3.11}
 Assume that $\Omega$, $\tilde{\Omega}$ are bounded, uniformly convex domains with smooth boundary in $\mathbb{R}^{n}$ and $\tilde{\Omega}\subset\subset B_{1}(0)$, $0<\alpha_{0}<1$, $u_{0}\in C^{2+\alpha_{0}}(\bar{\Omega})$ which is   uniformly convex and satisfies $Du_{0}(\Omega)=\tilde{\Omega}$. If $u$ is a strictly convex solution to (\ref{e1.12})-(\ref{e1.14}) and  $\epsilon_{0}$ is some constant satisfying
\begin{equation*}\label{e3.32}
h_{ij}\geq \epsilon_{0}Hg_{ij},\ \ (x,t)\in\partial\Omega_T.
\end{equation*}
Then there holds
\begin{equation*}\label{e3.33}
h_{ij}\geq \epsilon_{0}H g_{ij},\ \ (x,t)\in\Omega_T.
\end{equation*}
\end{lemma}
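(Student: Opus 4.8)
The plan is to recast the assertion as a preservation property, under the flow, of the symmetric two-tensor
$$M_{ij}:=h_{ij}-\epsilon_{0}Hg_{ij}$$
(so that $M_{ij}\geq0$ as a quadratic form is exactly the curvature pinching $\kappa_{\min}\geq\epsilon_{0}H$), and to obtain it from the tensor (Hamilton-type) maximum principle. What must be shown is that $M_{ij}\geq0$ on all of $\bar\Omega_{T}$ once it is known on the parabolic boundary $\partial\Omega_{T}$, i.e.\ at $t=0$ — which holds for a suitable $\epsilon_{0}$ because $u_{0}$ is uniformly convex — and on $\partial\Omega\times[0,T]$.

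First I would derive the evolution equation of $M_{ij}$. Writing $\square:=\partial_{t}-\triangle_{M}$ and using $\nabla_{M}g\equiv0$, so that $\square g_{ij}=\partial_{t}g_{ij}=2Hh_{ij}$ by (\ref{e3.23}), equations (\ref{e3.29}) and (\ref{e3.30}) give
$$\square h_{ij}=2Hh_{ik}g^{kl}h_{lj}-|A|^{2}h_{ij},\qquad \square(Hg_{ij})=-|A|^{2}Hg_{ij}+2H^{2}h_{ij}.$$
Subtracting, substituting $h_{ij}=M_{ij}+\epsilon_{0}Hg_{ij}$ throughout, and expanding with the contractions $M_{ik}g^{kl}g_{lj}=M_{ij}$ and $g_{ik}g^{kl}g_{lj}=g_{ij}$, the terms proportional to $\epsilon_{0}^{2}H^{3}g_{ij}$ cancel, leaving
$$\square M_{ij}=2HM_{ik}g^{kl}M_{lj}+\bigl(2\epsilon_{0}H^{2}-|A|^{2}\bigr)M_{ij}.$$
Carrying out this cancellation, with the right index bookkeeping for the time-dependent metric $g$, is the first point that needs genuine care.

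Next I would check the null-eigenvector condition for the reaction term $Q_{ij}:=2HM_{ik}g^{kl}M_{lj}+(2\epsilon_{0}H^{2}-|A|^{2})M_{ij}$. It is algebraic in $M_{ij}$, with coefficients $H,|A|^{2}$ bounded on $\bar\Omega_{T}$ by Lemma \ref{lem3.10} and the strict convexity of $u$ (which keeps $h_{ij}$ nonnegative and bounded above, hence $H$ and $|A|^{2}$ bounded). If at a point $M_{ij}\geq0$ and $M_{ij}v^{j}=0$, then
$$Q_{ij}v^{i}v^{j}=2H\,(M_{ik}v^{i})g^{kl}(M_{lj}v^{j})+(2\epsilon_{0}H^{2}-|A|^{2})\,M_{ij}v^{i}v^{j}=0\geq0 .$$
Since $M_{ij}\geq0$ on $\partial\Omega_{T}$ by hypothesis, Hamilton's tensor maximum principle on a manifold with boundary (cf.\ \cite{KG}) then yields $M_{ij}\geq0$ throughout $\bar\Omega_{T}$, which is the assertion. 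If one prefers to avoid the abstract theorem, the same conclusion follows by perturbation: run the argument for $M^{\delta}_{ij}:=M_{ij}+\delta e^{Kt}g_{ij}$ with $K$ larger than a bound for $2\epsilon_{0}H^{2}-|A|^{2}$, so that at a would-be first interior null eigenvector $v$ the scalar $M^{\delta}_{ij}v^{i}v^{j}$ would satisfy $\square\bigl(M^{\delta}_{ij}v^{i}v^{j}\bigr)>0$ at its spatial minimum — impossible — and then let $\delta\downarrow0$.

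The main obstacle is thus twofold. Algebraically, it is the evolution computation: the cancellation of the $\epsilon_{0}^{2}H^{3}g_{ij}$ terms is precisely what makes $Q_{ij}$ vanish on null eigenvectors, so any slip in the contractions destroys the maximum-principle step. Analytically, it is that $M=\{(x,u(x,t)):x\in\Omega\}$ is a graph over a bounded domain and hence not a closed manifold, so the closed-manifold version of the tensor maximum principle does not apply directly; one needs its parabolic-boundary form, which is exactly why the hypothesis imposes the pinching on the entire parabolic boundary $\partial\Omega_{T}$, not merely at $t=0$.
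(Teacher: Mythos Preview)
Your proposal is correct and follows the same strategy as the paper: derive a reaction--diffusion equation for a symmetric two-tensor encoding the pinching $\kappa_{\min}\geq\epsilon_{0}H$, verify the null-eigenvector condition on the reaction term, and invoke Hamilton's tensor maximum principle. The only difference is cosmetic: the paper works with the normalized tensor $M_{ij}=h_{ij}/H-\epsilon_{0}g_{ij}$ (following Huisken \cite{GH}, Theorem~4.3), which produces a first-order drift term $u^{k}\nabla_{k}M_{ij}$ with $u^{k}=\tfrac{2}{H}g^{kl}\nabla_{l}H$ in the evolution equation, whereas your unnormalized choice $M_{ij}=h_{ij}-\epsilon_{0}Hg_{ij}$ yields the cleaner equation $\square M_{ij}=2HM_{ik}g^{kl}M_{lj}+(2\epsilon_{0}H^{2}-|A|^{2})M_{ij}$ with no drift. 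Since $H>0$ along the flow the two tensors have the same sign, and both reaction terms satisfy the null-eigenvector condition, so the arguments are interchangeable; your version is arguably tidier. Your remark that the graph has boundary, so one needs the parabolic-boundary form of the tensor maximum principle rather than the closed-manifold version, is a point the paper passes over silently.
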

\begin{proof}
On the  strictly spacelike graph $M$, let $u^{k}$ be a vector field with $$u^{k}=\frac{2}{H}g^{kl}\nabla_{l}H$$
and $M_{ij}$,  $N_{ij}$ be symmetric tensors as follows
$$M_{ij}=\frac{h_{ij}}{H}-\epsilon_{0}g_{ij},\quad\,\,N_{ij}=2Hh_{il}g^{lk}h_{kj}-2\epsilon_{0}Hh_{ij}.$$
As same as  the computation in Theorem 4.3 from \cite{GH},  $N_{ij}$  is a polynomial in $M_{ij}$  and satisfy a null-eigenvector condition.
It follows from (\ref{e3.23}), (\ref{e3.29}), (\ref{e3.30}) that one can gives the evolution equation
$$(\frac{\partial}{\partial t}-\triangle_{M} )M_{ij}=u^{k}\nabla_{k}M_{ij}+N_{ij}.$$
Then it yields the desired results by Hamilton's maximum principle \cite{RSH}.
\end{proof}

\begin{Corollary}\label{c3.12}
 Assume that $\Omega$, $\tilde{\Omega}$ are bounded, uniformly convex domains with smooth boundary in $\mathbb{R}^{n}$ and $\tilde{\Omega}\subset\subset B_{1}(0)$, $0<\alpha_{0}<1$, $u_{0}\in C^{2+\alpha_{0}}(\bar{\Omega})$ which is   uniformly convex and satisfies $Du_{0}(\Omega)=\tilde{\Omega}$. If $u$ is a strictly convex solution to (\ref{e1.12})-(\ref{e1.14}),
then there exists a positive constant $C_{18}$ depending only on $u_0$, $\Omega$, $\tilde{\Omega}$, such that
\begin{equation}\label{eq3.16}
D^2 u(x,t) \geq C_{18}\min_{\partial\Omega_T}|D^2 u| I_n,\ \ (x,t)\in\bar\Omega_T,
\end{equation}
where $I_n$ is the $n\times n$ identity matrix.
\end{Corollary}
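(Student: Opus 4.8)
\noindent\emph{Proof proposal.}
The plan is to deduce the interior lower bound for $D^{2}u$ from the curvature pinching estimate of Lemma~\ref{lem3.11}, after passing back and forth between the eigenvalues of the Hessian $D^{2}u$ and the principal curvatures $\kappa_{1},\dots,\kappa_{n}$ of $M$, and then closing the estimate with the uniform two-sided bound for the mean curvature $H=\sum_{i}\kappa_{i}$ coming from Lemma~\ref{l3.3}. Throughout write $m:=\min_{\partial\Omega_T}|D^{2}u|$ for the least eigenvalue of $D^{2}u$ over the parabolic boundary; the goal is $D^{2}u\ge C_{18}\,m\,I_{n}$ on $\bar\Omega_T$ with $C_{18}$ depending only on $u_{0},\Omega,\tilde\Omega$.

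First I would record the comparison between $\lambda(D^{2}u)$ and $(\kappa_{1},\dots,\kappa_{n})$. Since $Du(\Omega)=\tilde\Omega\subset\subset B_{1}(0)$, the quantity $v=\sqrt{1-|Du|^{2}}$ and the symmetric matrices $b^{ij}=\delta_{ij}-\frac{D_{i}uD_{j}u}{v(1+v)}$, $b_{ij}=\delta_{ij}+\frac{D_{i}uD_{j}u}{1+v}$ all have eigenvalues pinched between two positive constants depending only on $\sup_{\bar\Omega}|Du|$, hence only on $\tilde\Omega$. Inserting this into $a_{ij}=\frac1v b^{ik}D_{kl}u\,b^{lj}$ and into the inverse relation $D_{ij}u=v\,b_{ik}a_{kl}b_{lj}$, and using convexity of $u$ (so $[D^{2}u]\ge0$ and $[a_{ij}]\ge0$), a routine quadratic-form estimate yields positive constants $c_{1},c_{2}$, depending only on $\tilde\Omega$, with
\[
\min_{i}\kappa_{i}\ \ge\ c_{1}\,\lambda_{\min}(D^{2}u),\qquad \lambda_{\min}(D^{2}u)\ \ge\ c_{2}\,\min_{i}\kappa_{i}\qquad\text{on }\bar\Omega_T .
\]
Next, Lemma~\ref{l3.3} (more precisely the bound on $\sum_i\kappa_i$ established in its proof, together with Lemma~\ref{l3.1}) gives positive constants $H_{\min}\le H_{\max}$, depending only on $u_{0}$ and $\tilde\Omega$, with $H_{\min}\le H\le H_{\max}$ everywhere on $\bar\Omega_T$. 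Consequently, on the parabolic boundary $\partial\Omega_T$ we have $\min_{i}\kappa_{i}\ge c_{1}m\ge\frac{c_{1}m}{H_{\max}}H$, i.e.
\[
h_{ij}\ \ge\ \varepsilon_{0}\,H\,g_{ij}\quad\text{on }\partial\Omega_T,\qquad \varepsilon_{0}:=\frac{c_{1}m}{H_{\max}} .
\]

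Now I would apply Lemma~\ref{lem3.11} with this $\varepsilon_{0}$: the pinching $h_{ij}\ge\varepsilon_{0}Hg_{ij}$, equivalently $\min_{i}\kappa_{i}\ge\varepsilon_{0}H$, then propagates to all of $\Omega_T$, hence to $\bar\Omega_T$ by continuity. Using $H\ge H_{\min}$ and then the second comparison above,
\[
\lambda_{\min}(D^{2}u)\ \ge\ c_{2}\min_{i}\kappa_{i}\ \ge\ c_{2}\varepsilon_{0}H_{\min}\ =\ \frac{c_{1}c_{2}H_{\min}}{H_{\max}}\,m\ =:\ C_{18}\,m\qquad\text{on }\bar\Omega_T ,
\]
which is exactly $D^{2}u\ge C_{18}\,m\,I_{n}$, with $C_{18}$ depending only on $u_{0},\Omega,\tilde\Omega$. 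The only genuine analytic input is Lemma~\ref{lem3.11} (Hamilton's tensor maximum principle and the null-eigenvector property of $N_{ij}$), which we may assume; the point needing care in the bookkeeping is that the pinching constant $\varepsilon_{0}$ must depend on the solution only through the single explicit factor $m=\min_{\partial\Omega_T}|D^{2}u|$, and this is guaranteed precisely by the $t$-independent two-sided bound $H_{\min}\le H\le H_{\max}$ of Lemma~\ref{l3.3}. (Alternatively, the upper bound $D^{2}u\le C_{17}I_{n}$ of Lemma~\ref{lem3.10} may be used in place of $H\le H_{\max}$ in the boundary step.)
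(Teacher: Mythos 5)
Your proof is correct and follows essentially the same route as the paper: establish the boundary pinching $h_{ij}\ge\epsilon_0 H g_{ij}$ with $\epsilon_0$ proportional to $\min_{\partial\Omega_T}|D^2u|$, propagate it via Lemma~\ref{lem3.11}, and then convert back to an eigenvalue bound for $D^2u$ using the uniform control on $H$. You have simply made explicit the two steps the paper leaves implicit — the two-sided comparison between $\lambda(D^2u)$ and $\kappa$ (via the bounded eigenvalues of $v$ and $b^{ij}$) and the $t$-independent two-sided bound on $H$ from Lemma~\ref{l3.3}/Lemma~\ref{l3.1} — which is exactly what the paper's terse ``the conclusion follows by the boundedness of $H$'' is appealing to.
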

\begin{proof}
By $\tilde{\Omega}\subset\subset B_{1}(0)$ and the second boundary consition  we see that
\begin{equation*}
h_{ij}\geq \epsilon_{0}Hg_{ij},\ \ (x,t)\in\partial\Omega_T.
\end{equation*}
where $\epsilon_{0}=C\min_{\partial\Omega_T}|D^2 u|$ for some universal consttant $C$ depending only on the known data but independence of $T$.
Using the previous lemma we derive
\begin{equation*}
h_{ij}\geq \epsilon_{0}H g_{ij},\ \ (x,t)\in\Omega_T.
\end{equation*}
Then the conclusion follows by the boundedness of $H$  and  $\tilde{\Omega}\subset\subset B_{1}(0)$.
\end{proof}
To obtain  the positive lower bound of $D^{2}u$ on $\partial\Omega_T$, we consider of the Legendre transformation of $u$.
As before, we see that this can in fact be written
\begin{equation*}
\frac{\partial \tilde{u}}{\partial y_{i}}=x_{i},\,\,\frac{\partial ^{2}\tilde{u}}{\partial y_{i}\partial y_{j}}=u^{ij}(x)
\end{equation*}
where $[u^{ij}]=[D^{2}u]^{-1}$. Then $\tilde{u}$ satisfies (\ref{e2.17}).

Recall that $\tilde{\beta}=(\tilde{\beta}^{1}, \cdots, \tilde{\beta}^{n})$ with $\tilde{\beta}^{k}:=\tilde{h}_{p_{k}}(D\tilde{u})$ and $\tilde{\nu}=(\tilde{\nu}_{1}, \tilde{\nu}_{2},\cdots,\tilde{\nu}_{n})$ is the unit inward normal vector of $\partial\tilde{\Omega}$.
In the following we give the arguments as in \cite{JU}, one can see there for more details.
For any tangential direction $\tilde \varsigma$, we have
\begin{equation}\label{e3.35}
   \tilde{u}_{\tilde\beta \tilde\varsigma}=\tilde h_{p_k}(D\tilde u)\tilde u_{k\tilde\varsigma}=0.
\end{equation}
Then the second order derivative of $\tilde u$ on the boundary is also controlled by $u_{\tilde\beta \tilde\varsigma}$, $u_{\tilde\beta \tilde\beta}$ and $u_{\tilde\varsigma\tilde\varsigma}$. At $\tilde x\in \partial\tilde\Omega$, any unit vector $\tilde\xi$ can be written in terms of a tangential component $\tilde\varsigma(\tilde\xi)$ and a component in the direction $\tilde\beta$ by
$$\tilde\xi=\tilde\varsigma(\tilde\xi)+\frac{\langle \tilde\nu,\tilde\xi\rangle}{\langle\tilde\beta,\tilde\nu\rangle}\tilde\beta,$$
where
$$\tilde\varsigma(\tilde\xi):=\tilde\xi-\langle \tilde\nu,\tilde\xi\rangle \tilde\nu-\frac{\langle \tilde\nu,\tilde\xi\rangle}{\langle\tilde\beta,\tilde\nu\rangle}\tilde\beta^T,$$
and
$$\tilde\beta^T:=\tilde\beta-\langle \tilde\beta,\tilde\nu\rangle \tilde\nu.$$
We observe that $\langle\tilde\beta,\tilde\nu\rangle=\langle\beta,\nu\rangle$.
By the uniformly obliqueness estimate (\ref{ee3.9}), we have
\begin{equation}\label{e3.36}
\begin{aligned}
|\tilde{\varsigma}(\tilde{\xi})|^{2}&=1-\left(1-\frac{|\tilde{\beta}^{T}|^{2}}{\langle\tilde{\beta},\tilde{\nu}\rangle^{2}}\right)
\langle\tilde{\nu},\tilde{\xi}\rangle^{2}
-2\langle\tilde{\nu},\tilde{\xi}\rangle\frac{\langle\tilde{\beta}^{T},\tilde{\xi}\rangle}{\langle\tilde{\beta},\tilde{\nu}\rangle}\\
&\leq 1+C_{19}\langle\tilde{\nu},\tilde{\xi}\rangle^{2}-2\langle\tilde{\nu},\tilde{\xi}\rangle\frac{\langle\tilde{\beta}^{T},\tilde{\xi}\rangle}{\langle\tilde{\beta},\tilde{\nu}\rangle}\\
&\leq C_{20}.
\end{aligned}
\end{equation}
Denote $\varsigma:=\frac{\varsigma(\xi)}{|\varsigma(\xi)|}$, then by (\ref{ee3.9}), (\ref{e3.35}) and (\ref{e3.36}) we arrive at
\begin{equation}\label{e3.37}
\begin{aligned}
\tilde{u}_{\tilde{\xi}\tilde{\xi}}&=|\tilde{\varsigma}(\tilde{\xi})|^{2}
\tilde{u}_{\tilde{\varsigma}\tilde{\varsigma}}+2|\tilde{\varsigma}(\tilde{\xi})|\frac{\langle\tilde{\nu},\tilde{\xi}\rangle}{\langle\tilde{\beta},\tilde{\nu}\rangle}\tilde{u}_{\tilde{\beta}\tilde{\varsigma}}+
\frac{\langle\nu,\xi\rangle^{2}}{\langle\beta,\nu\rangle^{2}}
\tilde{u}_{\tilde{\beta}\tilde{\beta}}\\
&=|\tilde{\varsigma}(\tilde{\xi})|^{2}\tilde{u}_{\tilde{\varsigma}\tilde{\varsigma}}+\frac{\langle\tilde{\nu},\tilde{\xi}\rangle^{2}}{\langle\tilde{\beta},\tilde{\nu}\rangle^{2}}
\tilde{u}_{\tilde{\beta}\tilde{\beta}}\\
&\leq C_{21}(\tilde{u}_{\tilde{\varsigma}\tilde{\varsigma}}+\tilde{u}_{\tilde{\beta}\tilde{\beta}}),
\end{aligned}
\end{equation}
Therefore, we also only need to estimate $\tilde{u}_{\tilde\beta\tilde\beta}$ and $\tilde{u}_{\tilde\varsigma\tilde\varsigma}$ respectively.

Further we have
\begin{lemma}\label{lem3.13a}
If $\tilde{u}$ is a strictly convex solution of (\ref{e2.17}), then there exists a positive constant $C_{22}$ depending only on $u_0$, $\Omega$, $\tilde{\Omega}$, such that
\begin{equation}\label{e3.42}
   \max_{\partial\Omega_T}\tilde{u}_{\tilde{\beta}\tilde{\beta}} \leq C_{22}.
\end{equation}
\end{lemma}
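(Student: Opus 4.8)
The plan is to follow the barrier argument of J.~Urbas \cite{JU}, in the parabolic form already used for the obliqueness estimate (Lemma \ref{llll3.4}; cf. \cite{RS}, \cite{OK}), now applied to the \emph{dual} flow: $\tilde u$ solves (\ref{e2.17}), whose operator $\tilde G$ obeys exactly the same structure conditions as $G$ by Corollary \ref{c3.4}, so (\ref{e3.42}) is precisely the upper bound for the doubly oblique second derivative of $\tilde u$. It is worth recording the Legendre identity underlying this step: since $[\tilde u_{ij}]=[u^{ij}]=[D^{2}u]^{-1}$ and $\tilde\nu\circ Du=\beta$, $\tilde\beta\circ Du=\nu$ on $\partial\Omega$, the formula $\langle\beta,\nu\rangle=\sqrt{h_{p_k}h_{p_l}u_{kl}\,u^{ij}\nu_i\nu_j}$ established in the proof of Lemma \ref{llll3.4} reads $\langle\beta,\nu\rangle^{2}=u_{\beta\beta}\,\tilde u_{\tilde\beta\tilde\beta}$ at corresponding boundary points. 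Hence, since $|\beta|$ is bounded and $\langle\beta,\nu\rangle\ge 1/C_1$ by (\ref{ee3.9}), (\ref{e3.42}) is equivalent to a uniform positive \emph{lower} bound $u_{\beta\beta}\ge 1/C$ on $\partial\Omega$; this is genuine content, because although $D^{2}u\le C_{17}I$ by Lemma \ref{lem3.10}, nothing yet prevents the reciprocal-type quantity $\tilde u_{\tilde\beta\tilde\beta}$ from blowing up, so a maximum-principle argument is unavoidable. I would argue directly with $\tilde u$.

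First I would localise: let $(y_0,t_0)\in\partial\tilde\Omega\times[0,T]$ be a point at which $\tilde u_{\tilde\beta\tilde\beta}$ attains its maximum over $\partial\tilde\Omega\times[0,T]$ (if $t_0=0$ the bound follows from $\tilde u_0\in C^{2+\alpha_0}$, so take $t_0>0$), rotate so that $\tilde\nu(y_0)=e_n$, and extend $\tilde\nu$ and $\tilde\beta=D\tilde h(D\tilde u)$ smoothly to a one-sided neighbourhood of $y_0$, as in the extension used in the proof of Lemma \ref{llll3.4}. Differentiating $\tilde h(D\tilde u)=0$ once along $\partial\tilde\Omega$ gives $\tilde u_{\tilde\beta\tilde\varsigma}=0$ for tangential $\tilde\varsigma$ (this is (\ref{e3.35})), and a second differentiation along $\partial\tilde\Omega$ expresses all tangential--tangential and tangential--oblique second derivatives of $\tilde u$ at $y_0$ through $\tilde u_{\tilde\beta\tilde\beta}(y_0,t_0)$, the curvature of $\partial\tilde\Omega$, and $D\tilde u$, so that $D^{2}\tilde u(y_0,t_0)$ is controlled by $\tilde u_{\tilde\beta\tilde\beta}(y_0,t_0)$; cf. (\ref{e3.37}). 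Differentiating the interior equation of (\ref{e2.17}) twice in a fixed direction $\xi$ and using the concavity $\tilde G_{ij,kl}\le 0$ to absorb the resulting quadratic third-order term, together with Corollary \ref{c3.4} and Lemma \ref{l3.30} for the remaining cross terms, a computation as in the proof of Lemma \ref{llll3.4} yields $\mathcal{\tilde{L}}(\tilde u_{\xi\xi})\le C\,\mathcal{T}_{\tilde{G}}+C$, with $\mathcal{\tilde{L}}=\tilde G_{ij}\partial_{ij}-\partial_t$.

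With these ingredients I would take $\xi$ normal to $\partial\tilde\Omega$ at $y_0$ and consider, on $\tilde\Omega_\rho:=\tilde\Omega\cap B_\rho(y_0)$, the auxiliary function
$$
\Psi(y,t)=\tilde u_{\xi\xi}(y,t)-\tilde u_{\xi\xi}(y_0,t_0)+\tilde k\,h(y)+\tilde A|y-y_0|^{2}+(\text{correction terms from the boundary condition}),
$$
with $h$ the defining function of $\tilde\Omega$, $D^{2}h\le-\theta I$. Using $\mathcal{\tilde{L}}(\tilde u_{\xi\xi})\le C\mathcal{T}_{\tilde{G}}+C$, the strict concavity of $h$, and the structure bounds $\Lambda_4\le\sum_{i=1}^n\partial\tilde G/\partial\mu_i\le\Lambda_5$ from Corollary \ref{c3.4}, one fixes $\rho$ small and then $\tilde k,\tilde A$ large --- all depending only on $u_0,\Omega,\tilde\Omega$ --- so that $\mathcal{\tilde{L}}\Psi\le 0$ in $\tilde\Omega_\rho\times[0,T]$, $\Psi\ge 0$ on $\partial(\tilde\Omega_\rho\times[0,T])$, and $\triangle\Psi(\cdot,0)\le 0$ in $\tilde\Omega_\rho$ with $\Psi(\cdot,0)\ge 0$ on $\partial\tilde\Omega_\rho$ (the $t=0$ data being controlled by the $C^{2+\alpha_0}$-norm of $u_0$) --- exactly the configuration of the barrier in the proof of Lemma \ref{llll3.4}. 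The parabolic maximum principle then gives $\Psi\ge 0$ on $\tilde\Omega_\rho\times[0,T]$, while $\Psi(y_0,t_0)=0$; since $\langle\tilde\beta,\tilde\nu\rangle\ge 1/C_1>0$, the Hopf lemma for parabolic equations applied at $(y_0,t_0)$ forces $\partial_n\Psi(y_0,t_0)\ge 0$, and unwinding the once- and twice-differentiated boundary conditions this is precisely the inequality bounding $\tilde u_{\tilde\beta\tilde\beta}(y_0,t_0)$ above by a constant $C_{22}$ depending only on $u_0,\Omega,\tilde\Omega$.

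The main obstacle, as in Urbas's elliptic proof and in the obliqueness estimate, is the bookkeeping in the differentiated equation and the twice-differentiated oblique condition: after collecting all the geometric terms produced by $D\tilde\nu$, $D\tilde\beta$ and the curvature of the curved hypersurface $\partial\tilde\Omega$, one must verify that the only genuinely uncontrolled quantity left over is $\tilde u_{\tilde\beta\tilde\beta}$ itself, and then tune $\rho,\tilde k,\tilde A$ so that the favorable sign coming from $D^{2}h\le-\theta I$ beats the error $C\mathcal{T}_{\tilde{G}}+C$, with every constant independent of $T$ --- exactly as flagged in the remark preceding Lemma \ref{llll3.4}. A secondary subtlety is that, in contrast to the $\Omega$--side, no a priori bound on $D^{2}\tilde u$ is yet available, so the computation must be run for the fixed smooth solution on $[0,T]$ and the resulting estimate read off purely in terms of the data.
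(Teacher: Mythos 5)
Your plan goes wrong at the choice of auxiliary function. To bound the \emph{doubly oblique} second derivative $\tilde u_{\tilde\beta\tilde\beta}$, the paper does \emph{not} differentiate the interior equation twice and run a maximum-principle argument on a quantity like $\tilde u_{\xi\xi}-\tilde u_{\xi\xi}(y_0,t_0)+\tilde k h+\tilde A|y-y_0|^2$; that is the machinery used one lemma later (Lemma~\ref{lem3.14}) for the purely tangential bound $\tilde u_{\tilde\varsigma\tilde\varsigma}$, where it closes because the twice-\emph{tangentially}-differentiated boundary condition furnishes the missing relation $\tilde h_{p_k}\tilde u_{k11}=-\tilde h_{p_kp_l}\tilde u_{k1}\tilde u_{l1}\ge\tilde\theta\mathcal M^2$. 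For $\tilde u_{\tilde\beta\tilde\beta}$ there is no such closing relation: if you take $\xi$ normal at $y_0$, the Hopf inequality $\partial_n\Psi(y_0,t_0)\ge0$ controls the third derivative $\tilde u_{nnn}(y_0,t_0)$, and no tangential differentiation of $\tilde h(D\tilde u)=0$ returns $\tilde u_{\tilde\beta\tilde\beta}$ --- it only gives $\tilde u_{\tilde\beta\tilde\varsigma}=0$ and, upon a second tangential differentiation, a bound on $\tilde u_{\tilde\beta\tilde\varsigma\tilde\varsigma}$. Moreover $\xi=\tilde\nu(y_0)$ is not $\tilde\beta(y_0)$ in general, so even a successful bound on $\tilde u_{nn}$ would not immediately give the statement; the decomposition \eqref{e3.37} shows one needs $\tilde u_{\tilde\beta\tilde\beta}$ and $\tilde u_{\tilde\varsigma\tilde\varsigma}$ separately.

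The paper instead uses the much simpler first-order barrier $\tilde\Psi:=-\tilde h(D\tilde u)+C_0 h$. Both terms vanish on $\partial\tilde\Omega\times[0,T]$; the operator estimates \eqref{e3.3}--\eqref{e3.8} and $D^2h\le-\theta I$ give $\mathcal{\tilde L}\tilde\Psi\le(C_{23}-C_0\theta)\sum_i\tilde G_{ii}\le0$ for $C_0$ large (together with an elliptic maximum principle check at $t=0$), so $\tilde\Psi\ge0$ in $\tilde\Omega\times[0,T]$. Because $\tilde\Psi$ vanishes at $(\tilde x_0,t_0)$, its oblique derivative there is nonnegative, and the whole point is the identity
$\frac{\partial}{\partial\tilde\beta}\bigl(\tilde h(D\tilde u)\bigr)=\tilde h_{p_k}\tilde u_{kl}\tilde\beta^l=\tilde\beta^k\tilde u_{kl}\tilde\beta^l=\tilde u_{\tilde\beta\tilde\beta}$,
which makes the \emph{first} oblique derivative of the \emph{first}-order quantity $-\tilde h(D\tilde u)$ equal to $-\tilde u_{\tilde\beta\tilde\beta}$ on the boundary. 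This is exactly why Urbas's argument for the doubly oblique derivative never needs to differentiate the PDE. Your opening observation $\langle\beta,\nu\rangle^2=u_{\beta\beta}\,\tilde u_{\tilde\beta\tilde\beta}$ is correct and worth keeping, but you then abandon it; a proof built on it would need a uniform positive lower bound for $u_{\beta\beta}$, which again you would obtain via the analogous first-order barrier on the $\Omega$-side, not via a second-derivative barrier.
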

\begin{proof}
Without loss of generality, let $\tilde{x}_0\in\partial\tilde{\Omega}$, $t_0\in (0,T]$ satisfy $\tilde{u}_{\tilde{\beta}\tilde{\beta}}(\tilde{x}_0,t_0)=\max_{\partial\Omega_T}\tilde{u}_{\tilde{\beta}\tilde{\beta}}$.
To estimate the upper bound of $\tilde{u}_{\tilde{\beta}\tilde{\beta}}$,
we consider the barrier function
$$\tilde\Psi:=-\tilde{h}(D\tilde{u})+C_0 h.$$
For any $y\in \partial\tilde{\Omega}$, $t\in[0,T]$, $D\tilde{u}(y,t)\in \partial\Omega$, then $\tilde{h}(D\tilde{u})=0$. It is clear that $h=0$ on $\partial\tilde{\Omega}$.  First we have
$$\mathcal{\tilde{L}}(C_0 h)=C_0 \tilde{G}_{ij}h_{ij}\leq C_0\left(-\theta\sum_{i=1}^{n}\tilde{G}_{ii}\right).$$
Using the equations (\ref{e2.17}), a direct computation shows that
\begin{equation}\label{e3.39}
\begin{aligned}
\mathcal{\tilde{L}}\left(-\tilde{h}(D\tilde{u})\right)&=\tilde{G}_{ij}\left(-\tilde{h}_{\tilde{p}_{k}\tilde{p}_{l}}
\partial_{ki}\tilde{u}\partial_{lj}\tilde{u}\right)-\tilde{h}_{\tilde{p}_{k}}\tilde{G}_{y_{k}}\\
&\leq C_{23}\sum_{i=1}^{n}\tilde{G}_{ii},
\end{aligned}
\end{equation}
where we had used the estimates (\ref{e3.3})-(\ref{e3.5}) in Corollary \ref{c3.4}.
Therefore, we obtain
$$\tilde L\tilde\Psi(y)\leq \left(C_{23}-C_0\theta\right)\sum_{i=1}^n \tilde G_{ii}. $$
It is clear that $\tilde\Psi=0$ on $\partial\tilde\Omega\times [0,T]$. At $t=0$,
$$-\triangle\tilde{\Psi}=\triangle\tilde{h}(D\tilde{u}_{0})-\triangle C_{0}h\geq \triangle\tilde{h}(D\tilde{u}_{0})+C_{0}n\theta,$$
where we had used the properties of the defining function $h$ respect to $\tilde{\Omega}.$  Let
$$C_{0}=\frac{C_{23}}{\theta}+\frac{1}{n\theta}\sup_{\tilde{\Omega}}|\triangle\tilde{h}(D\tilde{u}_{0})|,$$
then $\mathcal{\tilde{L}}\tilde\Psi(y)\leq 0$  and $-\triangle\tilde{\Psi}\mid_{t=0}\geq 0.$ We see that at $t=0$,
\begin{equation*}
\left\{ \begin{aligned}
   -\triangle\tilde\Psi&\geq 0,\quad &&y\in\tilde\Omega,\\
   \tilde\Psi&= 0 ,\quad\  &&y\in\partial\tilde\Omega.
\end{aligned} \right.
\end{equation*}
By the maximum principle for elliptic partial differential equation, we arrive at
$$\tilde\Psi\mid_{t=0}\geq 0.$$
It follows from the above results that
\begin{equation}\label{e3.40}
\left\{ \begin{aligned}
   \mathcal{\tilde{L}}\tilde\Psi&\leq 0,\quad &&(y,t)\in\tilde\Omega\times(0,T],\\
   \tilde\Psi&\geq 0 ,\quad\  &&(y,t)\in(\partial\tilde\Omega\times(0,T])\cup(\tilde\Omega\times\{t=0\}).
\end{aligned} \right.
\end{equation}
Applying the maximum principle for parabolic partial differential equation, we get
$$\tilde\Psi(y,t)\geq 0,\quad\quad (y,t)\in \tilde{\Omega}\times[0,T].$$
Combining it with $\tilde\Psi(\tilde{x}_0,t_0)=0$ we obtain $\tilde\Psi_{\tilde{\beta}}(\tilde{x}_0,t_0)\geq 0$, which implies
$$\frac{\partial \tilde{h}}{\partial \tilde{\beta}}(D\tilde{u}(\tilde{x}_0,t_0))\leq C_0.$$
On the other hand, we see that at $(\tilde{x}_0,t_0)$,
$$\frac{\partial \tilde{h}}{\partial \tilde{\beta}}=\langle D\tilde{h}(D\tilde{u}),\tilde{\beta}\rangle=\frac{\partial \tilde{h}}{\partial p_k}\tilde{u}_{kl}\tilde{\beta}^l=\tilde{\beta}^k\tilde{u}_{kl}\tilde{\beta}^l=\tilde{u}_{\tilde{\beta}\tilde{\beta}}.$$
Therefore, letting $C_{22}= C_{0}$ we have
$$\tilde{u}_{\tilde{\beta}\tilde{\beta}}=\frac{\partial \tilde{h}}{\partial \tilde{\beta}}\leq C_{22}.$$
\end{proof}
\begin{lemma}\label{lem3.13}
Let $\tilde{G}=\tilde{G}(y,D^{2}\tilde{u})$,
$\tilde{G}_{ij,kl}=\frac{\partial^{2}\tilde{G}}{\partial\tilde{u}_{ij}\partial\tilde{u}_{kl}}$
and $\tilde{G}_{ij}=\frac{\partial\tilde{G}}{\partial\tilde{u}_{ij}}.$  If $\tilde{u}$ is a strictly convex solution of (\ref{e2.17}), then there exists a positive constant $C_{24}$ depending only on $u_0$, $\Omega$, $\tilde{\Omega}$, such that
\begin{equation}\label{e3.42a}
  -\tilde{G}_{ij,kl}\tilde{\eta}_{ij}\tilde{\eta}_{kl}+2\langle D_{y}\tilde{G}_{ij},\tilde{\xi}\rangle\tilde{\eta}_{ij}\geq -C_{24}\sum_{i=1}^{n}\tilde{G}_{ii}
\end{equation}
where $\eta_{ij}$ be any  symmetric tensor and $\tilde{\xi}$ be any unit vector.
\end{lemma}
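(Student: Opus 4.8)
The plan is to exploit that the operator in (\ref{e1.12}), written as a function $G(p,r)$ of $p=Du$ and $r=D^2u$, is in fact \emph{linear} in $r$: expanding $\sqrt{1-|p|^2}\,\mathrm{div}\bigl(Du/\sqrt{1-|Du|^2}\bigr)$ gives $G(p,r)=g^{ij}(p)r_{ij}$, where $g^{ij}(p)=\delta_{ij}+\frac{p_ip_j}{1-|p|^2}$ is the inverse metric (\ref{e1.7}). Consequently $\tilde G(y,R)=-G(y,R^{-1})=-g^{ij}(y)(R^{-1})_{ij}$, and differentiating $R\mapsto R^{-1}$ once and twice yields the closed forms $[\tilde G_{ij}]=R^{-1}PR^{-1}$, $\ -\tilde G_{ij,kl}\tilde\eta_{ij}\tilde\eta_{kl}=2\,\mathrm{tr}(PS\tilde\eta S\tilde\eta S)$ and $\langle D_y\tilde G_{ij},\tilde\xi\rangle=(SNS)_{ij}$, where I abbreviate $S:=R^{-1}=(D^2\tilde u)^{-1}=D^2u$, $P:=[g^{ij}(y)]$ and $N:=\sum_m\tilde\xi_m\,\partial_{y_m}P$. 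Since moreover $\sum_i\tilde G_{ii}=\mathrm{tr}(PR^{-2})=\mathrm{tr}(S^2P)$, the assertion (\ref{e3.42a}) becomes the purely algebraic inequality
$$\mathrm{tr}(PS\tilde\eta S\tilde\eta S)+\mathrm{tr}(SNS\tilde\eta)\ \ge\ -\tfrac12 C_{24}\,\mathrm{tr}(S^2P)$$
for every symmetric $\tilde\eta$, with $S>0$, $P>0$ symmetric and $N$ symmetric. First I would substitute $W:=S^{1/2}\tilde\eta S^{1/2}$ (which ranges over all symmetric matrices) and set $Q:=S^{1/2}PS^{1/2}>0$, $V:=S^{1/2}NS^{1/2}$, which turns the left side into $\mathrm{tr}(QW^2)+\mathrm{tr}(VW)$.

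Next I would diagonalize $Q$ in an orthonormal eigenbasis $e_1,\dots,e_n$ with eigenvalues $q_1,\dots,q_n>0$, express $W$ and $V$ in this basis, and minimize the resulting quadratic over each entry $W_{ij}$ separately by Young's inequality, obtaining
$$\mathrm{tr}(QW^2)+\mathrm{tr}(VW)\ \ge\ -\tfrac14\sum_i\frac{V_{ii}^2}{q_i}-\sum_{i<j}\frac{V_{ij}^2}{q_i+q_j}.$$
Writing $f_i:=S^{1/2}e_i$, so that $q_i=\langle f_i,Pf_i\rangle$ and $V_{ij}=\langle f_i,Nf_j\rangle$, the crucial point is that $P=[g^{ij}(y)]=I+\frac{y\otimes y}{1-|y|^2}\ge I$ (this is where $\tilde\Omega\subset\subset B_1(0)$ is used), whence $q_i\ge|f_i|^2$; combined with $|V_{ij}|\le|N|\,|f_i|\,|f_j|$, this bounds the right side above by $C\,|N|^2\sum_i|f_i|^2=C\,|N|^2\,\mathrm{tr}(S)=C\,|N|^2\,\Delta u$. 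Since $|N|$ is controlled on $\overline{\tilde\Omega}\subset B_1(0)$ and $\Delta u\le nC_{17}$ by Lemma \ref{lem3.10}, the left side of the algebraic inequality is $\ge -C_{25}$ for some $C_{25}$ depending only on the known data. Finally, by Corollary \ref{c3.4} one has $\sum_i\tilde G_{ii}=\mathcal{T}_{\tilde G}\ge\Lambda_4>0$, so choosing $C_{24}=C_{25}/\Lambda_4$ gives $-C_{25}\ge-C_{24}\sum_i\tilde G_{ii}$, which is (\ref{e3.42a}).

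The step I expect to be the main obstacle is the balancing in the second paragraph, precisely because at this stage of the paper no lower bound on the eigenvalues of $D^2u$ is yet available --- producing such a bound on $\partial\Omega$ is the purpose of the second-derivative estimate into which this lemma feeds. A crude Cauchy--Schwarz bound of the cross term $\mathrm{tr}(SNS\tilde\eta)$ against the concavity term $\mathrm{tr}(PS\tilde\eta S\tilde\eta S)$ produces a factor $\lambda_{\min}(S)^{-1}=\lambda_{\max}(D^2u)$ in the denominator, which one cannot control. The argument sidesteps this by using instead the coarse but scale-correct inequalities $P\ge I$ and $\mathrm{tr}(S)=\Delta u\le C$: the concavity term and the cross term degenerate at exactly the same rate in any given direction of $S$, so no negative power of $\lambda_{\min}(S)$ survives. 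The remaining ingredients --- the explicit differentiation of $\tilde G$, the diagonalization, and the term-by-term Young estimate --- are routine.
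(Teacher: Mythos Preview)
Your proof is correct and follows essentially the same strategy as the paper: recognize that $G(p,r)=g^{ij}(p)r_{ij}$ is linear in $r$, write $\tilde G(y,R)=-\mathrm{tr}\bigl(P(y)R^{-1}\bigr)$, differentiate $R\mapsto R^{-1}$ to get explicit formulae for $\tilde G_{ij}$, $\tilde G_{ij,kl}$ and $D_y\tilde G_{ij}$, balance the quadratic concavity term against the linear cross term by Young's inequality, and absorb the result using the upper bound $D^2u\le C_{17}I_n$ from Lemma~\ref{lem3.10} together with the lower bound $\sum_i\tilde G_{ii}\ge\Lambda_4$ from Corollary~\ref{c3.4}.

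The one genuine difference is in the bookkeeping. The paper diagonalizes $D^2\tilde u$ and then writes the left side of \eqref{e3.42a} as $\sum_i s_{ii}\,\frac{2}{\tilde u_{ii}^{\,3}}\,\tilde\eta_{ii}^{\,2}+\sum_i\tilde s_{ii}\,\frac{1}{\tilde u_{ii}^{\,2}}\,\tilde\eta_{ii}$, effectively tracking only the diagonal entries of $\tilde\eta$; the off-diagonal contributions of both the Hessian term and the cross term are not displayed. Your approach avoids this by diagonalizing $Q=S^{1/2}PS^{1/2}$ instead, so that all entries $W_{ij}$ are visible and the term-by-term Young estimate is transparent; the crucial observation $P\ge I$ (hence $q_i\ge|f_i|^2$) is exactly what makes the off-diagonal pieces harmless and recovers the same bound $-C\,\mathrm{tr}(S)=-C\,\Delta u$. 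So your argument is a cleaner execution of the same idea and in fact fills in what the paper's computation leaves implicit.
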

\begin{proof}
Denote $s_{ij}=\delta_{ij}+\frac{y_{i}y_{j}}{1-|y|^{2}}$ and $[\tilde{u}_{ij}]=[u_{ij}]^{-1}.$  Recalling that $$\tilde{G}(y, D^{2}\tilde{u})=-s_{ij}u_{ij}.$$
By the rotation of the coordinate system for any fixed point $y\in \tilde{\Omega}$, we assume that
$$[\tilde{u}_{ij}]=\mathrm{diag}(\tilde{u}_{11},\tilde{u}_{22},\cdots\tilde{u}_{nn}).$$
We observe that
$$\tilde{s}_{ii}\triangleq \langle D_{y}\tilde{G}_{ii},\tilde{\xi}\rangle\tilde{u}^{2}_{ii}$$
depend only on $y$ and $\tilde{\xi}$, so that it's bounded.
Then there holds
\begin{equation*}
\begin{aligned}
 -\tilde{G}_{ij,kl}\tilde{\eta}_{ij}\tilde{\eta}_{kl}+2\langle D_{y}\tilde{G}_{ij},\tilde{\xi}\rangle\tilde{\eta}_{ij}
&=s_{ii}\frac{2}{\tilde{u}^{3}_{ii}}\tilde{\eta}^{2}_{ii}+\tilde{s}_{ii}\frac{1}{\tilde{u}^{2}_{ii}}\tilde{\eta}_{ii}\\
&\geq \frac{2}{\tilde{u}^{3}_{ii}}\tilde{\eta}^{2}_{ii}-C_{25}\frac{1}{\tilde{u}^{2}_{ii}}|\tilde{\eta}_{ii}|\\
&\geq \frac{2}{\tilde{u}^{3}_{ii}}\tilde{\eta}^{2}_{ii}-\frac{2}{\tilde{u}^{3}_{ii}}\tilde{\eta}^{2}_{ii}-
C^{2}_{25}\frac{2}{\tilde{u}_{ii}}\\
&\geq -C^{2}_{25}\frac{2}{\tilde{u}_{ii}}\\
&=-C^{2}_{25}2u_{ii}\\
&\geq -2C^{2}_{25}C_{17},
\end{aligned}
\end{equation*}
where we had used Lemma \ref{lem3.10}. It yields the desired result by setting $$C_{24}=\frac{2}{\Lambda_{4}}C^{2}_{25}C_{17}.$$\end{proof}

Next, we estimate the double tangential derivative of $\tilde{u}$.
\begin{lemma}\label{lem3.14}
If $\tilde{u}$ is a strictly convex solution of (\ref{e2.17}), then there exists a positive constant $C_{26}$ depending only on $u_0$, $\Omega$, $\tilde{\Omega}$, such that
\begin{equation*}\label{e4.43}
   \max_{\partial\tilde{\Omega}\times[0,T]}\max_{|\tilde\varsigma|=1, \langle\tilde\varsigma,\tilde\nu\rangle=0} \tilde{u}_{\tilde\varsigma\tilde\varsigma} \leq C_{26}.
\end{equation*}
\end{lemma}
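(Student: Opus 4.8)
The plan is to follow the Urbas / Schn\"urer--Smoczyk scheme for double tangential boundary estimates (\cite{JU}, \cite{OK}): produce at the maximal point a lower bound for an inner--normal third derivative of $\tilde u$ that is \emph{quadratic} in the maximal second tangential derivative, and play it off against an upper bound for the same quantity that is only \emph{linear}. Write $M_0:=\max_{\partial\tilde\Omega\times[0,T]}\max_{|\tilde\varsigma|=1,\langle\tilde\varsigma,\tilde\nu\rangle=0}\tilde u_{\tilde\varsigma\tilde\varsigma}$, fix $(\tilde x_0,t_0)\in\partial\tilde\Omega\times(0,T]$ and a unit tangent $\tilde\varsigma_0$ at $\tilde x_0$ realizing it, and rotate coordinates so $\tilde\nu(\tilde x_0)=e_n$, $\tilde\varsigma_0=e_1$. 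By the splitting (\ref{e3.37}) and Lemma \ref{lem3.13a} one has $D^2\tilde u\le C(M_0+1)I$ on $\partial\tilde\Omega\times[0,T]$; taking Legendre transforms gives $D^2u\ge c(M_0+1)^{-1}I$ on $\partial\Omega_T$, hence $\min_{\partial\Omega_T}|D^2u|\ge c(M_0+1)^{-1}$, and then Corollary \ref{c3.12} together with Lemma \ref{lem3.10} yields the \emph{interior} bound $D^2\tilde u\le C(M_0+1)I$ on $\bar\Omega_T$. This control of $D^2\tilde u$ in terms of $M_0$ is what permits running the barrier argument below on a fixed neighborhood of $\tilde x_0$ while keeping all constants independent of $T$.

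Next I would differentiate the boundary relation $\tilde h(D\tilde u)=0$ along $\partial\tilde\Omega$. One differentiation is (\ref{e3.35}) and shows $(D^2\tilde u)\tilde\beta\parallel\tilde\nu$ on $\partial\tilde\Omega$, so $\tilde u_{\tilde\beta\tilde\nu}=\tilde u_{\tilde\beta\tilde\beta}/\langle\tilde\beta,\tilde\nu\rangle$, bounded by Lemma \ref{lem3.13a} and the obliqueness estimate (\ref{ee3.9}). Differentiating twice along a boundary curve through $\tilde x_0$ with initial velocity $\tilde\varsigma_0$, and using the uniform convexity $D^2\tilde h\le-\tilde\theta I$ to extract a term $\ge\tilde\theta|(D^2\tilde u)e_1|^2\ge\tilde\theta(\tilde u_{11})^2=\tilde\theta M_0^2$, while the remaining terms are the bounded quantities above, produces
$$\tilde\beta^k\,\partial_k(\tilde u_{e_1e_1})(\tilde x_0,t_0)\ \ge\ \tilde\theta M_0^2-C .$$

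For the opposing upper bound, extend $\tilde\varsigma_0$ to a unit vector field $\tilde\varsigma(y)$ near $\tilde x_0$ that is tangent to $\partial\tilde\Omega$ along $\partial\tilde\Omega$, and set $w:=\tilde\varsigma^i\tilde\varsigma^j\tilde u_{ij}$, so that $w\le M_0$ on $\partial\tilde\Omega$ while $\partial_{\tilde\beta}w(\tilde x_0,t_0)=\tilde\beta^k\partial_k(\tilde u_{e_1e_1})(\tilde x_0,t_0)+O(M_0+1)$ keeps the quadratic lower bound up to a linear correction. Differentiating $\partial_t\tilde u=\tilde G(y,D^2\tilde u)$ twice in the $\tilde\varsigma$--directions: the fourth--order terms cancel, the $\tilde G_{ij,kl}$ and $\langle D_y\tilde G_{ij},\tilde\varsigma\rangle$ terms are handled by Lemma \ref{lem3.13}, the $D_y^2\tilde G$ terms and the terms in which derivatives land on $\tilde\varsigma$ and contract with $D^2\tilde u$ are absorbed by Corollary \ref{c3.4} and the interior bound on $D^2\tilde u$, and the one surviving term linear in the third derivatives of $\tilde u$ is disposed of by the weighted Cauchy--Schwarz inequality of Lemma \ref{l3.30} together with the concavity surplus in Lemma \ref{lem3.13}; this gives $\tilde{\mathcal L}w\ge-C(M_0+1)\,\mathcal T_{\tilde G}$ on $(\tilde\Omega\cap B_\rho(\tilde x_0))\times[0,T]$. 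With $\Phi:=M_0+B_1h(y)+B_2|y-\tilde x_0|^2$ and $B_1,B_2$ of size $C(M_0+1)$ chosen, using $D^2h\le-\theta I$, so that $\tilde{\mathcal L}(w-\Phi)\ge0$ inside and $w\le\Phi$ on the parabolic boundary of $\tilde\Omega_\rho\times[0,T]$ (on $\partial\tilde\Omega$ from $w\le M_0$; on $\tilde\Omega\cap\partial B_\rho$ from the interior bound on $D^2\tilde u$; at $t=0$ from the initial data), the parabolic maximum principle gives $w\le\Phi$ on $\tilde\Omega_\rho\times[0,T]$. Since $w(\tilde x_0,t_0)=M_0=\Phi(\tilde x_0,t_0)$ and $\tilde\beta$ points into $\tilde\Omega$, this forces $\partial_{\tilde\beta}w(\tilde x_0,t_0)\le\partial_{\tilde\beta}\Phi(\tilde x_0,t_0)=B_1\langle\tilde\nu,\tilde\beta\rangle\le C(M_0+1)$. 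Combined with the quadratic lower bound, $\tilde\theta M_0^2-C(M_0+1)\le C(M_0+1)$, so $M_0\le C_{26}$, with $C_{26}$ depending only on $u_0,\Omega,\tilde\Omega$.

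The step I expect to be the main obstacle is the barrier construction, and within it the geometric defect arising because $\tilde\varsigma_0$ is tangential only at the single point $\tilde x_0$: with a frozen direction $e_1$ the boundary value of $\tilde u_{e_1e_1}$ exceeds $M_0$ by an amount that is only \emph{linear} in the distance to $\tilde x_0$, which no smooth barrier vanishing at $\tilde x_0$ can absorb, so the curved tangential extension $\tilde\varsigma(y)$ is forced; but then the first and second derivatives of $\tilde\varsigma$ generate the extra term in $\tilde{\mathcal L}w$ that is formally of third order in $\tilde u$, and showing that it is in fact of lower order --- via Lemma \ref{l3.30} and the concavity surplus of Lemma \ref{lem3.13}, exactly as in \cite{JU} and \cite{OK} --- is the delicate bookkeeping. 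A secondary point is to ensure that invoking Corollary \ref{c3.12} to bound $D^2\tilde u$ in the interior only costs a factor of $(M_0+1)$, so that the final comparison still closes with a $T$--independent $C_{26}$.
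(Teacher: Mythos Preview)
Your overall scheme --- quadratic lower bound for $\tilde u_{11\tilde\beta}$ from differentiating $\tilde h(D\tilde u)=0$ twice, versus a linear upper bound from a barrier argument --- matches the paper, and your use of Corollary~\ref{c3.12} to propagate the boundary control of $D^2\tilde u$ into the interior (at the cost of a factor $M_0+1$) is exactly what the paper does as well. The difference lies in how the barrier is built.

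The paper does \emph{not} use a curved tangential extension. It keeps the frozen direction $e_1$ and absorbs the boundary defect --- your observation that $\tilde u_{e_1e_1}$ may exceed $M_0$ on $\partial\tilde\Omega$ away from $\tilde x_0$ --- by adding the Urbas correction
\[
-2\langle\tilde\nu,e_1\rangle\,\frac{\langle\tilde\beta^T,e_1\rangle}{\langle\tilde\beta,\tilde\nu\rangle}+C_{28}\langle\tilde\nu,e_1\rangle^2,
\]
which depends on $y$ and $D\tilde u$ and vanishes at $\tilde x_0$. The decomposition (\ref{e3.37}) together with Lemma~\ref{lem3.13a} shows that, after this correction and normalization by $\mathcal M$, the test function $\tilde w$ is $\ge0$ on all of $\partial\tilde\Omega\times[0,T]$. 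Because the direction is frozen, $\tilde{\mathcal L}\tilde u_{11}$ involves only $\tilde G_{ij,kl}\tilde u_{ij1}\tilde u_{kl1}$ and $\tilde G_{y_1,ij}\tilde u_{ij1}$, so Lemma~\ref{lem3.13} applies immediately; the correction term is a smooth function of $(y,D\tilde u)$ and is handled by Corollary~\ref{c3.4} as in (\ref{e3.39}). No third-order remainder ever appears. So your assertion that ``no smooth barrier vanishing at $\tilde x_0$ can absorb'' the linear boundary defect, forcing a curved extension, is not correct: the Urbas correction is precisely such a barrier, and this is what \cite{JU} actually does.

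Your alternative via $w=\tilde\varsigma^i\tilde\varsigma^j\tilde u_{ij}$ produces the extra term $4\tilde G_{kl}(\partial_l\tilde\varsigma^i)\tilde\varsigma^j\tilde u_{ijk}$, which is linear in third derivatives with a \emph{free} index $k$. The concavity surplus in Lemma~\ref{lem3.13} controls only $\tilde G_{ij,kl}\tilde u_{ij\varsigma}\tilde u_{kl\varsigma}$ --- third derivatives in the single direction $\tilde\varsigma$ --- so it does not absorb this term ``exactly as in \cite{JU}'': Urbas never encounters it. One can close the argument, but it needs the full off-diagonal concavity of $\tilde G$ (not just the diagonal computation in the proof of Lemma~\ref{lem3.13}) together with a weighted Cauchy--Schwarz, and this is more than what Lemmas~\ref{l3.30} and \ref{lem3.13} as stated provide. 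The paper's route is both shorter and avoids this bookkeeping entirely; the normalization by $\mathcal M$ also keeps all barrier constants $O(1)$ rather than $O(M_0+1)$.
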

\begin{proof}
Without loss of generality, we assume that $\tilde{x}_{0}\in\partial\Omega$, $t_{0}\in(0,T]$, $e_{n}$ is the unit inward normal vector of $\partial\tilde{\Omega}$ at $\tilde{x}_0$ and $e_{1}$ is the tangential veator of $\partial\tilde{\Omega}$ at $\tilde{x}_0$ respectively, such that
$$ \max_{\partial\tilde{\Omega}\times[0,T]}\max_{|\tilde\varsigma|=1, \langle\tilde\varsigma,\tilde\nu\rangle=0} \tilde{u}_{\tilde\varsigma\tilde\varsigma}=\tilde{u}_{11}(\tilde{x}_0,t_0)=:\mathcal{ M}.$$
For any $y\in \partial\tilde{\Omega}$, $t\in[0,T]$, it follows from the proof of (\ref{e3.36}) and (\ref{e3.37}) that
\begin{equation}\label{e4.45}
\begin{aligned}
\tilde u_{\tilde\xi\tilde\xi}&=|\tilde\varsigma(\tilde\xi)|^2\tilde u_{\tilde\varsigma\tilde\varsigma}+ \frac{\langle \tilde\nu,\tilde\xi\rangle^2}{\langle \tilde\beta,\tilde\nu\rangle^2}\tilde u_{\tilde\beta\tilde\beta}\\
          &\leq \left(1+C_{27}\langle \tilde\nu,\tilde\xi\rangle^2-2\langle \tilde\nu,\tilde\xi\rangle \frac{\langle \tilde\beta^T,\tilde\xi\rangle}{\langle \tilde\beta,\tilde\nu\rangle}\right) \mathcal{M}
                 + \frac{\langle \tilde\nu,\tilde\xi\rangle^2}{\langle \tilde\beta,\tilde\nu\rangle^2}\tilde u_{\tilde\beta\tilde\beta}.
\end{aligned}
\end{equation}
Let us skip therefore to the case $\mathcal{M}\geq 1$. Thus by (\ref{ee3.9}), (\ref{e3.42}) and (\ref{e4.45}) we deduce that
\begin{equation*}\label{eq3.9a}
  \frac{\tilde u_{\tilde\xi\tilde\xi}}{\tilde M}+2\langle \tilde\nu,\tilde\xi\rangle \frac{\langle \tilde\beta^T,\tilde\xi\rangle}{\langle \tilde\beta,\tilde\nu\rangle}
             \leq 1+C_{28}\langle \tilde\nu,\tilde\xi\rangle^2.
\end{equation*}
Let $\tilde\xi=e_1$, then
\begin{equation*}\label{eq3.10a}
  \frac{\tilde u_{11}}{\tilde M}+2\langle \tilde\nu,e_1\rangle \frac{\langle \tilde\beta^T,e_1\rangle}{\langle \tilde\beta,\tilde\nu\rangle}
             \leq 1+C_{28}\langle \tilde\nu,e_1\rangle^2.
\end{equation*}
We see that the function
\begin{equation*}\label{eq3.11a}
\tilde w:=A|y-\tilde x_0|^2-\frac{\tilde u_{11}}{\tilde M}-2\langle \tilde\nu,e_1\rangle \frac{\langle \tilde\beta^T,e_1\rangle}{\langle \tilde\beta,\tilde\nu\rangle}+C_{28}\langle \tilde\nu,e_1\rangle^2+1
\end{equation*}
satisfies
$$\tilde w|_{\partial\tilde\Omega\times[0,T]}\geq 0,\quad  \tilde w(\tilde x_0,t_0)=0.$$
Denote a neighborhood of $\tilde{x}_0$ in $\tilde\Omega$ by
$$\tilde\Omega_{r}:=\tilde\Omega\cap B_{r}(\tilde{x}_0),$$
where $r$ is a positive constant such that $\tilde\nu$ is well defined in $\tilde\Omega_{r}$.
Let us consider
$$-2\langle \tilde\nu,e_1\rangle \frac{\langle \tilde\beta^T,e_1\rangle}{\langle \tilde\beta,\tilde\nu\rangle}+C_{28}\langle \tilde\nu,e_1\rangle^2+1$$
as a known function depending on $y$ and $D\tilde u$. Then by the proof of (\ref{e3.39}), we also obtain
\begin{equation*}
\left|\mathcal{\tilde L}\left(-2\langle \tilde\nu,e_1\rangle \frac{\langle \tilde\beta^T,e_1\rangle}{\langle \tilde\beta,\tilde\nu\rangle}+C_{28}\langle \tilde\nu,e_1\rangle^2+1\right)\right|\leq C_{29}\sum_{i=1}^n \tilde{G}_{ii}.
\end{equation*}
By the equations (\ref{e2.17}), we get that
\begin{equation*}
\mathcal{\tilde{L}}\tilde{u}_{11}=-\tilde{G}_{ij,kl}\tilde{u}_{ij1}\tilde{u}_{kl1}+2\tilde{G}_{y_{1},ij}\tilde{u}_{ij1}.
\end{equation*}
It follows from (\ref{e3.42a}) in Lemma \ref{lem3.13} that
\begin{equation*}
\mathcal{\tilde{L}}\tilde{u}_{11}\geq-C_{24}\sum_{i=1}^{n}\tilde{G}_{ii}.
\end{equation*}
We set
$$\tilde\Upsilon:=\tilde w+C_0 {h}.$$
Furthermore, by(\ref{ee3.9}), (\ref{eq3.16}), (\ref{e3.37}) and (\ref{e3.42}),  we can choose the constant $A$ large enough such that
$$\tilde w|_{(\tilde\Omega \cap \partial B_{r}(\tilde x_0))\times[0,T]} \geq 0 $$
As in the proof of (\ref{e3.40}), letting $C_0>>A$ we get that
$$\mathcal{\tilde{L}}\tilde\Upsilon\leq 0,\quad (y,t)\in \tilde\Omega_{r}\times(0,T)$$
and
$$\tilde\Upsilon|_{t=0}\geq 0.$$
Putting the above arguments togather, we arrive at
\begin{equation*}\label{e3.47}
\left\{ \begin{aligned}
   \mathcal{\tilde{L}}\tilde\Upsilon&\leq 0,\quad &&(y,t)\in\tilde\Omega_{r}\times(0,T],\\
   \tilde\Upsilon&\geq 0 ,\quad\  &&(y,t)\in(\partial\tilde\Omega_{r}\times(0,T])\cup(\tilde\Omega_{r}\times\{t=0\}).
\end{aligned} \right.
\end{equation*}
A standard barrier argument makes conclusion of
$$\tilde\Upsilon_{\tilde\beta}(\tilde x_0,t_0)\geq0.$$
Therefore,
\begin{equation}\label{eq3.12a}
 \tilde u_{11\tilde\beta}(\tilde x_0)\leq C_{30}\mathcal{ M}.
\end{equation}
On the other hand, differentiating $\tilde h(D\tilde u)$ twice in the direction $e_1$ at $(\tilde x_0,t_0)$, we have
$$\tilde h_{p_k}\tilde u_{k11}+\tilde h_{p_kp_l}\tilde u_{k1}\tilde u_{l1}=0.$$
The concavity of $\tilde h$ yields that
$$\tilde h_{p_k}\tilde u_{k11}=-\tilde h_{p_kp_l}\tilde u_{k1}\tilde u_{l1}\geq \tilde\theta \mathcal{M}^2.$$
Combining it with $\tilde h_{p_k}\tilde u_{k11}=\tilde u_{11\tilde\beta}$, and using (\ref{eq3.12a}) we obtain
$$\tilde\theta  \mathcal{M}^2\leq C_{30}\mathcal{M}.$$
Then we get the upper bound of $\mathcal{M}=\tilde u_{11}(\tilde x_0,t_0)$ and thus the desired result follows.
\end{proof}

By Lemma \ref{lem3.13a}, Lemma \ref{lem3.14} and (\ref{e3.37}), we obtain the $C^2$ a-priori estimate of $\tilde{u}$ on the boundary.
\begin{lemma}\label{lem3.4a}
If $\tilde{u}$ is a strictly convex solution of (\ref{e2.17}), then there exists a positive constant $C_{31}$ depending only on $u_0$, $\Omega$, $\tilde{\Omega}$, such that
\begin{equation*}\label{eq3.13a}
\max_{\partial\tilde\Omega_T}|D^2\tilde u| \leq C_{31}.
\end{equation*}
\end{lemma}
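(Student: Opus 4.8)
The plan is to combine the three boundary-estimate ingredients already assembled—the strict obliqueness lower bound \eqref{ee3.9}, the bound on the double-oblique derivative $\tilde{u}_{\tilde\beta\tilde\beta}$ from Lemma \ref{lem3.13a}, and the bound on the double-tangential derivative $\tilde{u}_{\tilde\varsigma\tilde\varsigma}$ from Lemma \ref{lem3.14}—and feed them into the algebraic decomposition \eqref{e3.37}. First I would recall that for a strictly convex $\tilde u$ the matrix $D^2\tilde u$ is positive definite, so $|D^2\tilde u|$ at a point $\tilde x\in\partial\tilde\Omega$ is controlled by $\max_{|\tilde\xi|=1}\tilde u_{\tilde\xi\tilde\xi}$. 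Fix $(\tilde x,t)\in\partial\tilde\Omega_T$ and an arbitrary unit vector $\tilde\xi$; decompose $\tilde\xi$ into its tangential part $\tilde\varsigma(\tilde\xi)$ and its $\tilde\beta$-component exactly as in the paragraph preceding \eqref{e3.37}.

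Next I would invoke \eqref{e3.37}, which already records the identity
$$\tilde{u}_{\tilde{\xi}\tilde{\xi}}=|\tilde{\varsigma}(\tilde{\xi})|^{2}\tilde{u}_{\tilde{\varsigma}\tilde{\varsigma}}+\frac{\langle\tilde{\nu},\tilde{\xi}\rangle^{2}}{\langle\tilde{\beta},\tilde{\nu}\rangle^{2}}\tilde{u}_{\tilde{\beta}\tilde{\beta}}\leq C_{21}(\tilde{u}_{\tilde{\varsigma}\tilde{\varsigma}}+\tilde{u}_{\tilde{\beta}\tilde{\beta}}),$$
where the constant $C_{21}$ depends only on the obliqueness constant from \eqref{ee3.9} and on $\tilde\Omega$. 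Here $\tilde\varsigma=\tilde\varsigma(\tilde\xi)/|\tilde\varsigma(\tilde\xi)|$ is a unit tangential vector, so Lemma \ref{lem3.14} gives $\tilde{u}_{\tilde\varsigma\tilde\varsigma}\le C_{26}$, while Lemma \ref{lem3.13a} gives $\tilde{u}_{\tilde\beta\tilde\beta}\le C_{22}$. Substituting both bounds yields $\tilde{u}_{\tilde\xi\tilde\xi}\le C_{21}(C_{26}+C_{22})$ uniformly in $\tilde\xi$, $\tilde x\in\partial\tilde\Omega$ and $t\in[0,T]$. Since $\tilde u$ is convex, $0\le \tilde{u}_{\tilde\xi\tilde\xi}$ for every unit $\tilde\xi$, and a positive-definite symmetric matrix has operator norm equal to its largest Rayleigh quotient; hence $|D^2\tilde u(\tilde x,t)|\le C_{21}(C_{26}+C_{22})=:C_{31}$ on $\partial\tilde\Omega_T$, and $C_{31}$ depends only on $u_0,\Omega,\tilde\Omega$.

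There is essentially no hard obstacle here: the lemma is a bookkeeping synthesis of results that have already been proved, and the only point requiring a word of care is making sure that the same obliqueness constant $C_1$ from \eqref{ee3.9} is what enters $C_{21}$ in \eqref{e3.37} (so that $C_{31}$ truly depends only on the stated data), and that the convexity of $\tilde u$—guaranteed because $\tilde u$ is the Legendre transform of the strictly convex $u$—legitimately converts the directional bound into a bound on $|D^2\tilde u|$. I would state these two facts explicitly and then conclude.
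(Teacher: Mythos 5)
Your proof is correct and follows exactly the route the paper takes: the paper itself disposes of this lemma in one line, citing Lemma~\ref{lem3.13a}, Lemma~\ref{lem3.14} and the decomposition~\eqref{e3.37} as an immediate consequence, and your write-up simply spells out how those three ingredients combine, together with the (correct) observation that convexity of $\tilde u$ lets the directional bound $\tilde u_{\tilde\xi\tilde\xi}\le C$ control $|D^2\tilde u|$.
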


By Corollary \ref{c3.12} and Lemma \ref{lem3.4a}, we can see that
\begin{lemma}\label{lem3.5a}
If $\tilde{u}$ is a strictly convex solution of (\ref{e2.17}), then there exists a positive constant $C_{32}$ depending only on $u_0$, $\Omega$, $\tilde{\Omega}$, such that
\begin{equation*}\label{eq3.14a}
\max_{\bar{\tilde\Omega}_T}|D^2\tilde u| \leq C_{32}.
\end{equation*}
\end{lemma}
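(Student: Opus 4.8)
The plan is to combine the boundary $C^2$ bound from Lemma \ref{lem3.4a} with an interior maximum principle argument for a suitable second-derivative quantity of $\tilde u$. Since $\tilde u$ solves the parabolic equation $\dot{\tilde u}=\tilde G(y,D^2\tilde u)$ with $\tilde G$ satisfying the structure conditions of Corollary \ref{c3.4}, differentiating twice in a fixed unit direction $\tilde\xi$ gives
\[
\mathcal{\tilde L}\,\tilde u_{\tilde\xi\tilde\xi}=-\tilde G_{ij,kl}\,\tilde u_{ij\tilde\xi}\tilde u_{kl\tilde\xi}+2\langle D_y\tilde G_{ij},\tilde\xi\rangle\tilde u_{ij\tilde\xi}\geq -C\sum_{i=1}^n\tilde G_{ii},
\]
where the lower bound is precisely inequality (\ref{e3.42a}) of Lemma \ref{lem3.13} (valid here because $\tilde u$ is strictly convex and $D^2\tilde u$ is already bounded above on the whole domain by Lemma \ref{lem3.10} applied to the inverse, i.e. $D^2\tilde u=[D^2u]^{-1}$ and $D^2u\geq C_{18}\min_{\partial\Omega_T}|D^2u|I_n$ by Corollary \ref{c3.12}). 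So the largest double-derivative is a subsolution of $\mathcal{\tilde L}(\cdot)\geq -C\,\mathcal{T}_{\tilde G}$.

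First I would fix the quantity $w:=\tilde u_{\tilde\xi\tilde\xi}$ maximized over unit vectors $\tilde\xi$ (or, to avoid non-smoothness of the max over directions, work with $\operatorname{tr}D^2\tilde u=\triangle\tilde u$, which already controls all eigenvalues once $\tilde u$ is convex, and whose evolution inequality follows by summing the above over $\tilde\xi=e_1,\dots,e_n$). Then I would consider the auxiliary function $\tilde W:=\triangle\tilde u + N|y|^2$ (or $\triangle\tilde u + N\,\tilde h(D\tilde u)$ if one wants to use the concavity of the boundary defining function), choosing the constant $N$ large depending only on the known data so that $\mathcal{\tilde L}\tilde W\leq 0$ in $\tilde\Omega_T$ — this is possible because the bad term $+C\,\mathcal{T}_{\tilde G}$ from the $\triangle\tilde u$ part is dominated by $N$ times the strictly negative contribution $-\,\mathcal{T}_{\tilde G}$ coming from $\mathcal{\tilde L}(|y|^2)=2\sum\tilde G_{ii}$… wait, that term is positive; so instead one uses $\mathcal{\tilde L}(-\,|y|^2)\leq -2\,\mathcal{T}_{\tilde G}$, i.e. one takes $\tilde W:=\triangle\tilde u - N|y|^2$ is wrong-signed for the barrier but right-signed for the differential inequality — concretely I would take $\tilde W=\triangle\tilde u$ and simply invoke that $\mathcal{\tilde L}\triangle\tilde u\geq -C\mathcal{T}_{\tilde G}$ means $-\triangle\tilde u$ is (almost) a supersolution, then use the boundary bound and the $t=0$ bound together with the term $C_0 h$ exactly as in the proof of Lemma \ref{lem3.13a}. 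The cleanest route: set $\tilde\Xi:=\triangle\tilde u + C_0 h$ with $C_0$ large; by (\ref{e3.39})-type estimates $\mathcal{\tilde L}(C_0h)\leq -C_0\theta\,\mathcal{T}_{\tilde G}$, hence $\mathcal{\tilde L}\tilde\Xi\leq(C-C_0\theta)\mathcal{T}_{\tilde G}\leq 0$; on $\partial\tilde\Omega_T$ we have $h=0$ and $\triangle\tilde u\leq C_{31}$ by Lemma \ref{lem3.4a}; at $t=0$ the Hessian of $\tilde u_0$ is bounded; so by the maximum principle $\triangle\tilde u\leq \tilde\Xi\leq \max_{\text{parabolic boundary}}\tilde\Xi\leq C$ on all of $\bar{\tilde\Omega}_T$. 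Since $\tilde u$ is convex, $0< \tilde u_{\tilde\xi\tilde\xi}\leq \triangle\tilde u\leq C$ for every unit $\tilde\xi$, which gives $|D^2\tilde u|\leq C_{32}$.

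The main obstacle is verifying that the differential inequality (\ref{e3.42a}) applies along the flow, i.e. that the lower Hessian bound needed in Lemma \ref{lem3.13} holds uniformly in $t$ in the interior — this is exactly what Corollary \ref{c3.12} provides, but it is stated in terms of $\min_{\partial\Omega_T}|D^2u|$, so one must feed in the already-established boundary lower bound for $D^2u$ (equivalently the upper bound $C_{31}$ for $D^2\tilde u$ on $\partial\tilde\Omega_T$) to conclude a genuine positive lower bound for $D^2u$, hence a genuine upper bound for $D^2\tilde u$, interior and boundary together. Once that circle is closed, the barrier/maximum-principle step is routine and essentially identical to the argument already carried out for Lemma \ref{lem3.13a}.
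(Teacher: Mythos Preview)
Your proposal takes an unnecessarily long route and contains a sign error, while the paper's proof is essentially a one-line duality argument that you yourself state in your final paragraph without recognizing it as the whole proof.

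The paper simply combines Corollary~\ref{c3.12} with Lemma~\ref{lem3.4a} via the Legendre transform: since $D^2\tilde u=[D^2u]^{-1}$, the boundary bound $\max_{\partial\tilde\Omega_T}|D^2\tilde u|\leq C_{31}$ from Lemma~\ref{lem3.4a} is exactly a uniform lower bound for the eigenvalues of $D^2u$ on $\partial\Omega_T$; feeding this into Corollary~\ref{c3.12} yields $D^2u\geq (C_{18}/C_{31})I_n$ on all of $\bar\Omega_T$, which inverts to $D^2\tilde u\leq (C_{31}/C_{18})I_n$ on $\bar{\tilde\Omega}_T$. This is precisely the ``circle'' you describe at the end --- but it is not a preliminary step needed to justify Lemma~\ref{lem3.13}; it is the complete proof of the lemma.

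Your independent maximum-principle route for $\triangle\tilde u$ has the sign of the barrier wrong. From Lemma~\ref{lem3.13} you correctly get a \emph{lower} bound $\mathcal{\tilde L}(\triangle\tilde u)\geq -C\,\mathcal{T}_{\tilde G}$, not an upper bound, so you cannot conclude $\mathcal{\tilde L}(\triangle\tilde u + C_0h)\leq 0$. To push the interior maximum to the parabolic boundary you need $\mathcal{\tilde L}\tilde\Xi\geq 0$, which requires $\tilde\Xi:=\triangle\tilde u - C_0h$ (so that $\mathcal{\tilde L}(-C_0h)\geq C_0\theta\,\mathcal{T}_{\tilde G}$ absorbs the bad term). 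With that correction the argument does go through, but it is redundant given the duality shortcut above. Also, your stated ``main obstacle'' is not one: Lemma~\ref{lem3.13} uses only the upper bound on $D^2u$ from Lemma~\ref{lem3.10}, not any interior lower bound, so there is no circularity to worry about.
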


By Lemma \ref{lem3.10} and Lemma \ref{lem3.5a}, we conclude that
\begin{lemma}\label{lem3.6}
Assume that $\Omega$, $\tilde{\Omega}$ are bounded, uniformly convex domains with smooth boundary in $\mathbb{R}^{n}$ and $\tilde{\Omega}\subset\subset B_{1}(0)$, $0<\alpha_{0}<1$, $u_{0}\in C^{2+\alpha_{0}}(\bar{\Omega})$ which is   uniformly convex and satisfies $Du_{0}(\Omega)=\tilde{\Omega}$. If $u$ is a strictly convex solution to (\ref{e1.12})-(\ref{e1.14}), then there exists a positive constant $C_{33}$ depending only on $u_0$, $\Omega$, $\tilde{\Omega}$, such that
\begin{equation*}\label{eq3.15}
\frac{1}{C_{33}}I_n\leq D^2 u(x,t) \leq C_{33} I_n,\ \ (x,t)\in\bar\Omega_T,
\end{equation*}
where $I_n$ is the $n\times n$ identity matrix.
\end{lemma}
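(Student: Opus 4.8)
The plan is short: the upper bound is nothing new, and the lower bound is obtained by feeding the dual ($\tilde{u}$) estimate through the Legendre transform. First I would simply quote Lemma \ref{lem3.10}, which already gives $D^2 u(x,t)\le C_{17}I_n$ on $\bar\Omega_T$, so only a uniform positive lower bound on the eigenvalues of $D^2 u$ remains to be established.

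For that lower bound I would use the duality introduced in Section 2: for each fixed $t$, writing $y=Du(x,t)$, one has $[\partial^2\tilde{u}/\partial y_i\partial y_j]=[D^2 u(x,t)]^{-1}$, and $Du(\cdot,t)$ maps $\bar\Omega$ diffeomorphically onto $\bar{\tilde\Omega}$ carrying $\partial\Omega$ to $\partial\tilde\Omega$. Since $u(\cdot,t)$ is strictly convex the matrix $D^2 u(x,t)$ is symmetric positive definite, so its eigenvalues are exactly the reciprocals of those of $D^2\tilde{u}(y,t)$ at the corresponding point. Lemma \ref{lem3.5a} bounds $|D^2\tilde{u}|\le C_{32}$ on $\bar{\tilde\Omega}_T$, i.e. every eigenvalue of $D^2\tilde{u}$ is $\le C_{32}$; hence every eigenvalue of $D^2 u(x,t)$ is $\ge 1/C_{32}$, that is
\[
D^2 u(x,t)\ \ge\ \frac{1}{C_{32}}\,I_n,\qquad (x,t)\in\bar\Omega_T .
\]
Then I would take $C_{33}=\max\{C_{17},C_{32}\}$ and combine the two inequalities to conclude.

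A second, self-contained route to the lower bound avoids Lemma \ref{lem3.5a}: Lemma \ref{lem3.4a} gives $|D^2\tilde{u}|\le C_{31}$ on $\partial\tilde\Omega_T$, hence by the same reciprocal-eigenvalue argument $D^2 u\ge C_{31}^{-1}I_n$ on $\partial\Omega_T$, and Corollary \ref{c3.12} then propagates a uniform positive lower bound into the interior of $\bar\Omega_T$. I do not expect any genuine obstacle at this stage: all the analytic difficulty has already been absorbed into the strict obliqueness estimate (Lemma \ref{llll3.4}) and the boundary $C^2$ estimates (Lemmas \ref{lem3.13a} and \ref{lem3.14}), and the present lemma is the bookkeeping step that pairs the upper bound coming from the mean curvature flow for $u$ with the lower bound coming from the dual flow for $\tilde{u}$. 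The one point to keep honest is that the identity $D^2\tilde{u}=(D^2 u)^{-1}$ must be applied at matching points $y=Du(x)$, and that (by uniform convexity of $\Omega$ and $\tilde\Omega$) $Du$ really is a diffeomorphism respecting the two boundaries, so that boundary estimates for $\tilde{u}$ do translate into boundary estimates for $u$.
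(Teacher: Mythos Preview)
Your proposal is correct and matches the paper's own argument: the paper simply writes ``By Lemma~\ref{lem3.10} and Lemma~\ref{lem3.5a}, we conclude that\ldots'' before stating Lemma~\ref{lem3.6}, which is exactly your route of combining the upper bound on $D^2u$ with the upper bound on $D^2\tilde u=(D^2u)^{-1}$ via the Legendre transform. Your alternative route through Lemma~\ref{lem3.4a} and Corollary~\ref{c3.12} is also valid and is in fact how the paper obtains Lemma~\ref{lem3.5a} itself.
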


\vspace{3mm}

\section{Longtime existence and convergence}
We will need the following proposition, which essentially asserts the convergence of the flow. Let $\Omega$ be a bounded domain with smooth boundary in $\mathbb{R}^{n}$ and $\mathcal{S}$ be the open connected subset of $\mathbb{S}_{n}$ where
$$  \mathbb{S}_{n}=\{n\times n\,\, \text{real\,\,symmetric\,\,matrix}\}.$$
Given $u_{0}: \Omega\rightarrow\mathbb{R}$,
we consider the parabolic equation with second boundary condition:
\begin{equation}\label{e4.1}
\left\{ \begin{aligned}u_{t}-F(D^{2}u,Du,x)&=0,
&  x\in \Omega,\,\, t>0, \\
h(Du,x)&=0,& x\in \partial\Omega,\,\, t>0,\\
u&=u_{0},& x\in \Omega,\,\, t=0,
\end{aligned} \right.
\end{equation}
where $F: \mathcal{S}\times \mathbb{R}^{n}\times\Omega\rightarrow \mathbb{R}$ is $C^{2+\alpha}$ for some $0<\alpha<1$ and
satisfies
\begin{equation*}\label{e4.2}
A<B\Rightarrow F(A,p,x)<F(B,p,x).
\end{equation*}
\begin{Proposition}\label{p4.1}
(Huang and Ye, see Theorem 1.1 in  \cite{HRY1}.) For any $T>0$, we assume that $u\in C^{4+\alpha,\frac{4+\alpha}{2}}(\bar{\Omega}_{T})$ is a unique solution of the  parabolic equation (\ref{e4.1}), which satisfies
\begin{equation*}\label{e51.4}
\|u_{t}(\cdot,t)\|_{C(\bar{\Omega})}+\|Du(\cdot,t)\|_{C(\bar{\Omega})}+\|D^{2}u(\cdot,t)\|_{C(\bar{\Omega})}\leq \tilde{C}_{1},
\end{equation*}
\begin{equation*}\label{e51.40}
\|D^{2}u(\cdot,t)\|_{C^{\alpha}(\bar{D})}\leq \tilde{C}_{2},\quad \forall D\subset\subset\Omega,
\end{equation*}
and
\begin{equation*}\label{e51.5}
\inf_{x \in \partial \Omega}\big (\sum_{k=1}^{n}h_{p_{k}}(Du(x,t))\nu_{k} \big ) \geq \frac{1}{\tilde{C}_{3}},
\end{equation*}
where the positive constants $\tilde{C}_{1}$, $\tilde{C}_{2}$ and $\tilde{C}_{3}$  are independent of ~ $t\geq 1$. Then the solution $u(\cdot,t)$ converges to a function $u^{\infty}(x,t)=\tilde{u}^\infty(x)+C_{\infty}\cdot t$
in $C^{1+\zeta}(\bar{\Omega})\cap C^{4}(\bar{D})$ as $t\rightarrow\infty$
  for any $D\subset\subset\Omega$, $\zeta<1$, that is
 $$\lim_{t\rightarrow+\infty}\|u(\cdot,t)-u^{\infty}(\cdot,t)\|_{C^{1+\zeta}(\bar{\Omega})}=0,\qquad
  \lim_{t\rightarrow+\infty}\|u(\cdot,t)-u^{\infty}(\cdot,t)\|_{C^{4}(\bar{D})}=0.$$
And $\tilde{u}^{\infty}(x)\in C^{2}(\bar{\Omega})$ is a solution of
\begin{equation}\label{e51.6}
\left\{ \begin{aligned}F(D^{2}u,Du,x)&=C_{\infty},
&  x\in \Omega, \\
h(Du)&=0, &x\in\partial\Omega.
\end{aligned} \right.
\end{equation}
The constant $C_{\infty}$ depends only on $\Omega$ $f$, and $F$. The solution to (\ref{e51.6}) is unique up to additions of constants.
\end{Proposition}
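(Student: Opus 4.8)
The strategy is the one standard for second boundary value parabolic flows: upgrade the hypotheses to uniform-in-time higher regularity, prove $u_t(\cdot,t)\to C_\infty$ by a maximum-principle monotonicity together with a rigidity argument, and then prove convergence of $u-C_\infty t$ by applying the same monotonicity to the difference with a limiting stationary solution.

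First I would note that on the compact set of $(A,p,x)$ singled out by the a priori bounds $\|u_t\|,\|Du\|,\|D^2u\|\le\tilde C_1$, the operator $F$ is uniformly elliptic; hence $v:=u_t$ solves the linear parabolic equation $v_t=F_{A_{ij}}(D^2u,Du,x)D_{ij}v+F_{p_k}(D^2u,Du,x)D_kv$, with \emph{no} zeroth-order term, and, differentiating $h(Du,x)=0$ in $t$, the homogeneous oblique boundary condition $h_{p_k}(Du,x)D_kv=0$ on $\partial\Omega$, where the obliqueness $\langle h_p,\nu\rangle\ge 1/\tilde C_3>0$ is exactly the hypothesis. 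By Krylov--Safonov/Lieberman estimates for oblique problems (interior and up to the boundary, where only $C^0$ bounds on $D^2u$ are available) one gets a uniform-in-$t$ bound $\|v\|_{C^{\gamma,\gamma/2}(\bar\Omega\times[t,t+1])}\le C$ for some $\gamma\in(0,\alpha)$; feeding this back into the equation as a $C^{\gamma,\gamma/2}$ right-hand side, parabolic Schauder for oblique problems gives a uniform $C^{2+\gamma,1+\gamma/2}(\bar\Omega\times[t,t+1])$ bound (after subtracting $u(x^\ast,t)$ for a fixed $x^\ast\in\Omega$ to control zeroth order), and interior bootstrapping gives a uniform $C^{4+\gamma,2+\gamma/2}(\bar D\times[t,t+1])$ bound for every $D\subset\subset\Omega$. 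These estimates provide the compactness used below.

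Next, exactly as in the proof of Lemma~\ref{l3.1}, the maximum principle for the equation for $v$ together with Hopf's lemma at boundary extrema (combined with the oblique condition, which forces the inward normal derivative of $v$ to vanish at a boundary extremum) shows that $t\mapsto\max_{\bar\Omega}v(\cdot,t)$ is non-increasing and $t\mapsto\min_{\bar\Omega}v(\cdot,t)$ is non-decreasing; hence both converge, say to $a\ge b$. To see $a=b$, suppose $a>b$, pick $t_k\to\infty$, and use the uniform Hölder estimates to pass (along a subsequence on which the coefficients and the boundary vector $h_p(Du,x)$ also converge) to a limit $v_\infty$ of $v(\cdot,\cdot+t_k)$ on $\bar\Omega\times[-T,T]$. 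Then $v_\infty$ solves a uniformly parabolic linear equation with no zeroth-order term and a homogeneous oblique condition, and $\max_{\bar\Omega}v_\infty(\cdot,s)\equiv a$, $\min_{\bar\Omega}v_\infty(\cdot,s)\equiv b$ for all $s$. Applying the strong maximum principle at an interior maximum of $v_\infty(\cdot,0)$ (or Hopf's lemma plus the oblique condition, if the maximum is attained only on $\partial\Omega$) forces $v_\infty\equiv a$ on $\bar\Omega\times[-T,0]$, contradicting $\min v_\infty(\cdot,-T/2)=b<a$. Thus $a=b=:C_\infty$ and $u_t(\cdot,t)\to C_\infty$ uniformly; applying the elliptic version of the same argument to a difference of two stationary solutions shows $C_\infty$ is intrinsic.

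Finally, set $\bar u:=u-C_\infty t$, so $\bar u_t=F(D^2\bar u,D\bar u,x)-C_\infty\to0$ uniformly while $D\bar u,D^2\bar u$ keep their uniform bounds; normalizing $\hat u:=\bar u-\bar u(x^\ast,\cdot)$ (bounded in $C^{2+\gamma}(\bar\Omega)$ uniformly in $t$) I extract along some $t_k\to\infty$ a limit $\tilde u^\infty\in C^2(\bar\Omega)$, which solves $F(D^2\tilde u^\infty,D\tilde u^\infty,x)=C_\infty$, $h(D\tilde u^\infty,x)=0$ because $F(D^2\hat u,D\hat u,x)=\bar u_t+C_\infty\to C_\infty$ and $h(D\hat u,x)=0$. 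Now $w:=\bar u-\tilde u^\infty$ satisfies, by the mean value theorem and subtraction of the stationary equation and boundary condition, a linear parabolic equation with no zeroth-order term and a homogeneous oblique condition; hence $\max_{\bar\Omega}w(\cdot,t)$ is non-increasing and $\min_{\bar\Omega}w(\cdot,t)$ is non-decreasing, so $w$ (and therefore $\bar u$) stays bounded for $t\ge1$ and $\mathrm{osc}_{\bar\Omega}w(\cdot,t)$ is non-increasing. Since $\mathrm{osc}_{\bar\Omega}w(\cdot,t_k)=\mathrm{osc}_{\bar\Omega}(\hat u(\cdot,t_k)-\tilde u^\infty)\to0$, the oscillation tends to $0$; together with the monotone limits this forces $w(\cdot,t)\to W$ uniformly for some constant $W$. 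Replacing $\tilde u^\infty$ by the stationary solution $\tilde u^\infty+W$ gives $\bar u(\cdot,t)\to\tilde u^\infty$ uniformly, whence, interpolating against the uniform $C^{2+\gamma}(\bar\Omega)$ bound, convergence in $C^{1+\zeta}(\bar\Omega)$ for every $\zeta<1$, and, using the uniform interior $C^{4+\gamma}$ bounds, convergence in $C^4(\bar D)$ for every $D\subset\subset\Omega$; uniqueness up to constants of $\tilde u^\infty$ is the elliptic maximum-principle/Hopf argument already used. The main obstacle is the rigidity step — showing that the time-translated limits collapse the oscillation of $v$ (and then of $w$) — which is precisely where the strong maximum principle and Hopf's lemma for the oblique problem, powered by the uniform compactness estimates, are indispensable.
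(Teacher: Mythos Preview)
The paper does not give its own proof of Proposition~\ref{p4.1}; it is quoted verbatim as Theorem~1.1 of \cite{HRY1} and used as a black box in the proof of Theorem~\ref{t1.2}. So there is no in-paper argument to compare against.

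That said, your outline is exactly the standard route for this type of convergence result and is consistent with how \cite{HRY1} proceeds: (i) bootstrap the hypotheses to uniform-in-$t$ higher-order estimates via Krylov--Safonov/Lieberman for the linearized oblique problem followed by Schauder; (ii) use the maximum principle plus Hopf's lemma on $v=u_t$ to get monotonicity of $\max v$ and $\min v$, and a compactness/strong maximum principle rigidity step to force their limits to coincide; (iii) extract a subsequential stationary limit of the normalized function and upgrade to full convergence by applying the same monotonicity to $w=\bar u-\tilde u^\infty$. The only place to be a bit more careful is the boundary $C^{2+\gamma}$ estimate in step~(i): you should make explicit which boundary Schauder theorem for oblique problems you invoke (in this paper's own proof of Theorem~\ref{t1.2} the relevant tool is Lieberman \cite{GM}, Theorem~14.22), since the hypothesis only gives $C^\alpha$ control of $D^2u$ on interior sets and you need global $C^{2+\gamma}$ to pass the coefficients to the limit in your rigidity argument. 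With that filled in, your argument is complete.
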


Now, we can give

\noindent{\bf Proof of Theorem \ref{t1.2}.}

This a standard result by our $C^{2}$ estimates and uniformly oblique estimates, but for convenience we include here a proof.

 Part 1: The long time existence.

By Lemma \ref{lem3.6}, we know the global $C^{2,1}$ estimates for the solutions of the flow (\ref{e1.12})-(\ref{e1.14}). Using Theorem 14.22 in Lieberman \cite{GM} and Lemma \ref{llll3.4}, we can show that the solutions of the oblique derivative problem \eqref{e2.16} have global $C^{2+\alpha,1+\frac{\alpha}{2}}$ estimates.

Now let $u_{0}$  be a $C^{2+\alpha_{0}}$ strictly convex function as in the conditions of Theorem \ref{t1.2}. We assume that $T$ is the maximal time such that the solution to the flow \eqref{e2.16} exists. Suppose that $T<+\infty$. Combining Proposition \ref{p4.1} with Lemma \ref{lem3.6} and using Theorem 14.23 in \cite{GM}, there exists $ u\in C^{2+\alpha,1+\frac{\alpha}{2}}(\bar{\Omega}_{T})$ which satisfies \eqref{e2.16} and
$$\|u\|_{C^{2+\alpha,1+\frac{\alpha}{2}}(\bar{\Omega}_{T})}<+\infty.$$
Then we can extend the flow \eqref{e2.16} beyond the maximal time $T$. So that we deduce that $T=+\infty$. Then there exists the solution $u(x,t)$ for all times $t>0$ to (\ref{e1.12})-(\ref{e1.14}).

Part 2: The convergence.

By the boundary condition, we have
\begin{equation*}
\sup_{\bar{\Omega}_{T}}|Du|\leq \tilde{C}_{4},
\end{equation*}
where $\tilde{C}_{4}$ is a constant depending on $\Omega$ and $\tilde{\Omega}$. Using lemma \ref{lem3.6}, it yields
\begin{equation*}\label{e51.4}
\|u_{t}(\cdot,t)\|_{C(\bar{\Omega})}+\|Du(\cdot,t)\|_{C(\bar{\Omega})}+\|D^{2}u(\cdot,t)\|_{C(\bar{\Omega})}\leq \tilde{C}_{5},
\end{equation*}
where the constant $\tilde{C}_{5}$ depending only on $u_0$, $\Omega$, $\tilde{\Omega}$.
By intermediate Schauder estimates for parabolic
equations (cf. Lemma 14.6 and Proposition 4.25 in \cite{GM}), for any $D\subset\subset \Omega$, we have
\begin{equation*}
[D^{2}u]_{\alpha,\frac{\alpha}{2}, D_{T}}\leq C\sup_{\Omega_{T}}|D^{2}u|\leq \tilde{C}_{6},
\end{equation*}
and
\begin{equation*}
\sup_{t\geq 1}\|D^{3}u(\cdot,t)\|_{C(\bar{D})}+\sup_{t\geq 1}\|D^{4}u(\cdot,t)\|_{C(\bar{D})}+\sup_{x_{i}\in D, t_{i}\geq 1}\frac{|D^{4}u(x_{1}, t_{1})-D^{4}u(x_{2}, t_{2})|}{\max\{|x_{1}-x_{2}|^{\alpha},|t_{1}-t_{2}|^{\frac{\alpha}{2}}\}}\leq \tilde{C}_{7},
\end{equation*}
where $ \tilde{C}_{6}$, $\tilde{C}_{7}$ are  constants depending on the known data and dist$(\partial \Omega, \partial D)$.
Using Proposition \ref{p4.1} and combining the bootstrap arguments as in \cite{WHB}, we finish the proof of Theorem \ref{t1.2}.
\qed
\vspace{3mm}

Finally, suppose that each $X(\cdot,t)$ is the graph of a function $u(\cdot,t)$ and $X_{0}=(x,u_{0}(x))$ in $\mathbb{R}^{n+1}$.
We consider the graphic mean curvatture flow in $\mathbb{R}^{n+1}$:
\begin{equation}\label{e4.12}
\frac{\partial u}{\partial t}=\sqrt{1+|Du|^{2}} \mathrm{div}(\frac{Du}{\sqrt{1+|Du|^{2}}}),\quad
 \mathrm{in}\quad \Omega_{T}=\Omega\times(0,T),
\end{equation}
associated with the second boundary value problem
\begin{equation}\label{e4.13}
Du(\Omega)=\tilde{\Omega},  \quad t>0,
\end{equation}
and the initial condition
\begin{equation}\label{e4.14}
u=u_{0},  \quad t=0.
\end{equation}
Based on the same proof of Throrem \ref{t1.2}, an immediate consequence of  the graphic mean curvature flow (\ref{e4.12})-(\ref{e4.14}) for the
second boundary problem  is the following:
\begin{theorem}\label{t4.2}
Assume that $\Omega$, $\tilde{\Omega}$ are bounded, uniformly convex domains with smooth boundary in $\mathbb{R}^{n}$
,$0<\alpha_{0}<1$.
Then for any given initial function $u_{0}\in C^{2+\alpha_{0}}(\bar{\Omega})$
which is   uniformly convex and satisfies $Du_{0}(\Omega)=\tilde{\Omega}$,  the  strictly convex solution of (\ref{e4.12})-(\ref{e4.14}) exists
for all $t\geq 0$ and $u(\cdot,t)$ converges to a function $u^{\infty}(x,t)=u^\infty(x)+C_{\infty}\cdot t$ in $C^{1+\zeta}(\bar{\Omega})\cap C^{4+\alpha}(\bar{D})$ as $t\rightarrow\infty$
for any $D\subset\subset\Omega$, $\zeta<1$,$0<\alpha<\alpha_{0}$, i.e.,

$$\lim_{t\rightarrow+\infty}\|u(\cdot,t)-u^{\infty}(\cdot,t)\|_{C^{1+\zeta}(\bar{\Omega})}=0,\qquad
  \lim_{t\rightarrow+\infty}\|u(\cdot,t)-u^{\infty}(\cdot,t)\|_{C^{4+\alpha}(\bar{D})}=0.$$
And $u^{\infty}(x)\in C^{1+1}(\bar{\Omega})\cap C^{4+\alpha}(\Omega)$ is a solution of
\begin{equation}\label{e1.14a}
\left\{ \begin{aligned}\mathrm{div}(\frac{Du}{\sqrt{1+|Du|^{2}}})&=\frac{C_{\infty}}{\sqrt{1+|Du|^{2}}},
&  x\in \Omega, \\
Du(\Omega)&=\tilde{\Omega}.
\end{aligned} \right.
\end{equation}
The constant $C_{\infty}$ depends only on $\Omega$, $\tilde{\Omega}$. The solution to (\ref{e1.14a}) is unique up to additions of constants.
\end{theorem}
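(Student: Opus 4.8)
\noindent{\bf Proof of Theorem \ref{t4.2}.}

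The plan is to repeat, almost verbatim, the proof of Theorem \ref{t1.2}, replacing throughout the timelike factor $v=\sqrt{1-|Du|^{2}}$ by the Euclidean one $v=\sqrt{1+|Du|^{2}}$ and adjusting signs in the geometric formulas accordingly. For the graph $M=\mathrm{graph}\,u\subset\mathbb{R}^{n+1}$ one has $g_{ij}=\delta_{ij}+D_{i}uD_{j}u$, $g^{ij}=\delta_{ij}-\frac{D_{i}uD_{j}u}{1+|Du|^{2}}$, $h_{ij}=\frac{D_{ij}u}{\sqrt{1+|Du|^{2}}}$, upward unit normal $\nu=\frac{(-Du,1)}{\sqrt{1+|Du|^{2}}}$, and the principal curvatures are the eigenvalues of $a_{ij}=\frac{1}{v}b^{ik}D_{kl}u\,b^{lj}$ with $b^{ij}=\delta_{ij}-\frac{D_{i}uD_{j}u}{v(1+v)}$ and $v=\sqrt{1+|Du|^{2}}$; one checks directly that $[b^{ij}]^{2}=[g^{ij}]$, exactly as in Section 2. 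Writing $G(Du,D^{2}u)=\sqrt{1+|Du|^{2}}\,F[\mathcal{A}]$ with $F(\kappa)=\sum_{i}\kappa_{i}$, equation (\ref{e4.12}) becomes $u_{t}=G(Du,D^{2}u)$. The crucial structural remark is that the boundary condition $Du(\Omega)=\tilde{\Omega}$ with $\tilde{\Omega}$ bounded \emph{already} yields $\sup_{\bar{\Omega}_{T}}|Du|\le\tilde{C}$ and hence a bound on $1/v$; since the hypothesis $\tilde{\Omega}\subset\subset B_{1}(0)$ in Theorem \ref{t1.2} was used only to produce exactly this, it may simply be dropped here.

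With $|Du|$ bounded, the structural apparatus of Sections 2--3 transfers unchanged. Since $F$ is linear, (\ref{e1.2.0})--(\ref{e1.2.1}) are trivial; the eigenvalues of $[b^{ij}]$ are pinched between two positive constants, so the trace comparison $\sigma_{1}\mathcal{T}\le\mathcal{T}_{G}\le\sigma_{2}\mathcal{T}$, the bound $|G_{i}|\le\sigma_{3}F$ and the two-sided control of $\sum_{i}\frac{\partial G}{\partial\lambda_{i}}\lambda_{i}^{2}$ hold with identical proofs. The Legendre transform $\tilde{u}$ satisfies (\ref{e2.17}) with $\tilde{G}(y,D^{2}\tilde{u})=-s_{ij}(y)u_{ij}$, $s_{ij}(y)=\delta_{ij}-\frac{y_{i}y_{j}}{1+|y|^{2}}$, $[u_{ij}]=[D^{2}\tilde{u}]^{-1}$, and $[s_{ij}]$ is positive definite with eigenvalues bounded away from $0$ and $\infty$. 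Hence Lemmas \ref{l3.1}--\ref{l3.3} and Corollaries \ref{c3.3}--\ref{c3.4} go through verbatim, giving the uniform-in-$t$ structure conditions for $F$, $G$ and $\tilde{G}$.

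The a priori estimates are then argued exactly as in Section 3. Short-time existence is as in Proposition \ref{p2.1}. The $\dot{u}$-estimate (Lemma \ref{l3.1}) and its Legendre dual use only differentiation of the equation together with the Hopf lemma, hence persist; combined with $u_{t}=vH$ they give $0<c\le H=\sum_{i}\kappa_{i}\le C$, and then $D_{ij}u=v\,b_{ik}a_{kl}b_{lj}$ together with the bound on $|Du|$ yields $D^{2}u\le C_{17}I_{n}$ (Lemma \ref{lem3.10}). The strict obliqueness estimate (Lemma \ref{llll3.4}) follows from the same barrier $\omega=\langle\beta,\nu\rangle+\tau h(Du)$ and its dual, whose only inputs are the structure conditions of $G$, $\tilde{G}$ and the convexity of $u$, $\tilde{u}$; and the boundary $C^{2}$ bounds for $\tilde{u}$ (Lemmas \ref{lem3.13a}, \ref{lem3.13}, \ref{lem3.14}, \ref{lem3.4a}) use only those structure conditions, the obliqueness bound and $D^{2}u\le C_{17}I_{n}$. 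The one step needing genuine rechecking is the interior lower bound for $D^{2}u$, i.e.\ the convexity-preservation Lemma \ref{lem3.11}: one rewrites the evolution equations (\ref{e3.23})--(\ref{e3.30}) for $g_{ij}$, $h_{ij}$ and $H$ in the Euclidean setting (they differ from the Minkowski ones only in signs coming from the spacelike rather than timelike normal) and verifies that $M_{ij}=\frac{h_{ij}}{H}-\epsilon_{0}g_{ij}$ still obeys $(\partial_{t}-\triangle_{M})M_{ij}=u^{k}\nabla_{k}M_{ij}+N_{ij}$ with $N_{ij}$ polynomial in $M_{ij}$ and satisfying the null-eigenvector condition, so that Hamilton's tensor maximum principle applies. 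This is classical for graphical mean curvature flow, but it is the single place where the Euclidean sign conventions must be handled carefully. Combining Lemma \ref{lem3.11}, Corollary \ref{c3.12} and the boundary bound then gives $\frac{1}{C_{33}}I_{n}\le D^{2}u\le C_{33}I_{n}$ as in Lemma \ref{lem3.6}.

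Finally, with uniform $C^{2,1}$ and uniform obliqueness estimates in hand, Theorems 14.22 and 14.23 of Lieberman \cite{GM} upgrade them to uniform $C^{2+\alpha,1+\frac{\alpha}{2}}(\bar{\Omega}_{T})$ estimates that do not degenerate as $t\to T$, so the flow extends beyond any finite $T$ and exists for all time. The operator $F(D^{2}u,Du)=\big(\delta_{ij}-\frac{D_{i}uD_{j}u}{1+|Du|^{2}}\big)D_{ij}u$ is smooth and strictly increasing in $D^{2}u$ (its coefficient matrix is positive definite), so Proposition \ref{p4.1} applies; together with interior Schauder estimates and the bootstrap argument of \cite{WHB} it yields convergence of $u(\cdot,t)$ to $u^{\infty}(x)+C_{\infty}t$ with $u^{\infty}$ solving (\ref{e1.14a}), unique up to additive constants. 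The main obstacle, as indicated, is the verification of the convexity-preservation lemma with the Euclidean evolution equations; everything else is a transcription of Sections 2--4 with $\sqrt{1-|Du|^{2}}$ replaced by $\sqrt{1+|Du|^{2}}$.
\qed
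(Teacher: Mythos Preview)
Your proposal is correct and follows exactly the approach the paper intends: the paper gives no separate proof of Theorem \ref{t4.2} beyond the sentence ``Based on the same proof of Theorem \ref{t1.2}, an immediate consequence \ldots is the following,'' and what you have written is precisely a careful expansion of that remark. Your identification of the two points of substance --- that boundedness of $\tilde{\Omega}$ alone replaces the hypothesis $\tilde{\Omega}\subset\subset B_{1}(0)$, and that Lemma \ref{lem3.11} must be reverified with the Euclidean evolution equations (where indeed Huisken's original Theorem 4.3 in \cite{GH} applies directly) --- is exactly right.
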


{\bf Acknowledgments:} The authors would like to thank  referees for useful
comments, which improve the paper.

\vspace{5mm}

\end{document}